\theoremstyle{plain}%
\newtheorem{theorem}{Theorem}
\newtheorem{proposition}{Proposition}%
\newtheorem{corollary}{Corollary}%
\newtheorem{lemma}{Lemma}%
\newtheorem{remark}{Remark}
\definecolor{bred}{rgb}{0.8,0,0}
\newcommand{\citegroup}[1]{[{\let\cite\relax\cite{#1}}]}
\renewcommand\subsubsection{\@startsection{subsubsection}{3}{\z@}%
  {-3.25ex\@plus -1ex \@minus -.2ex}%
  {1.5ex \@plus .2ex}%
  {\normalfont\small\itshape\mdseries}}
\titleformat{\section}[block]
  {\normalfont\normalsize\bfseries}{\thesection.\ \ }{0em}{}
\titleformat{\subsection}[block]
  {\normalfont\normalsize\itshape}{\thesubsection.\ \ }{0em}{}
\titleformat{\subsubsubsection}[block]
  {\normalfont\small\itshape\mdseries}{\thesubsubsubsection.}{0em}{}
\title{\Large Taming the Interacting Particle Langevin Algorithm: The Superlinear case}
\author[3]{Tim Johnston}
\author[1]{Nikolaos Makras}
\author[1,2,3,4]{Sotirios Sabanis}
\affil[1]{School of Mathematics, University of Edinburgh, UK}
\affil[2]{National Technical University of Athens, Greece}
\affil[3]{Université Paris Dauphine-PSL, France} 
\affil[4]{Alan Turing Institute, UK}
\affil[5]{Archimedes/Athena Research Centre, Greece}
\date{\small \today}
\begin{document}
\maketitle
\begin{abstract}
        \noindent Recent advances in stochastic optimization have yielded the interacting  particle Langevin algorithm (IPLA), which leverages the notion of interacting particle systems (IPS) to efficiently sample from approximate posterior densities. This becomes particularly crucial in relation to the framework of Expectation-Maximization (EM), where the E-step is computationally challenging or even intractable. Although prior research has focused on scenarios involving convex cases with gradients of log densities that grow at most linearly, our work extends this framework to include polynomial growth. Taming techniques are employed to produce an explicit discretization scheme that yields a new class of stable, under such non-linearities, algorithms which are called tamed interacting  particle Langevin algorithms (tIPLA). We obtain non-asymptotic convergence error estimates in Wasserstein-2 distance for the new class under the best known rate.
\end{abstract}
\section{Introduction}
The Expectation-Maximization (EM) algorithm is widely used for approximating maximisers of posterior distributions. Applications span, but are not confined to, hyperparameter estimation, mixture models, hidden variable models, missing or incomplete data and variational inference \cite{EM}. At its essence, each iteration of the EM algorithm consists of two fundamental steps: the Expectation (E) and the Maximization (M) step. The algorithm is defined by alternating iteratively between these two steps. Given a data specification $p_{\theta}(x,y)$ parameterized by $\theta$, where x represents the latent variable (interpreted often as incomplete data) and y the observed data, our aim is to find $\theta^{*}$ that maximizes the marginal likelihood 
$q_{\theta}(y)=\int_{\mathbb{R}^{d^x}}p_{\theta}(x,y)dx$.\newline
\indent When the integral in the E-step is computationally challenging or intractable, Markov Chain Monte Carlo (MCMC) methods are often employed, traditionally using Metropolis-Hastings-type algorithms \cite{MetroMCMC,Wei}. However, those MCEM (Monte Carlo Expectation-Maximization) methods have two considerable limitations. Firstly, the underlying EM algorithm is not guaranteed to converge towards a global maximum and is susceptible to local mode entrapment, as documented in \cite{EM}. Lastly, in the case of the utilization of MALA (Metropolis-Hastings Adjusted Langevin Algorithm), it is well known that scalability issues arise, particularly in high-dimensional settings.\\
\indent In this landscape, a new method was introduced by \cite{Soul}, such that samples of the latent space variable were generated via an Unadjusted Langevin Algorithm (ULA) chain. 
An alternative avenue was pioneered in \cite{Kuntz}, in which the study of the limiting behaviour of various gradient flows associated with appropriate free energy functionals led to an interacting particle system (IPS) that provides efficient estimates for maximum likelihood estimations. Further, \cite{Ipla} expanded on this framework by injecting noise into the dynamics of the parameter $\theta$ itself, thereby transitioning from deterministic to stochastic dynamics. This key modification provides a stochastic system with an invariant measure, allowing the establishment of non-asymptotic convergence estimates for the resulting algorithm that targets $\theta^{*}$, the maximiser of the marginal likelihood.\\
\indent In our work, while the convexity assumption is maintained, we address the challenge posed by superlinear growth exhibited by gradients of log densities, which makes other known algorithms, such as vanilla Langevin based algorithms, unstable. To counteract this, we implement taming techniques, initially researched for non-globally Lipschitz drifts for SDEs in \cite{Hutzenthaler2} and subsequently in \cite{sabanis_2013} and \cite{Sabanis_2016}. The latter approach has found applications in optimization and machine learning and led to the design of new MCMC algorithms as one typically deals with high nonlinear objective function, see e.g. \cite{TULA,lim2023nonasymptotic,lovas2023taming,sabanis2019higher}. The underlying principle of all these algorithms is the rescaling of ULA's drift coefficient in such a way that maintains stability without significantly increasing computational complexity as in the case of implicit schemes, or by introducing additional constrains via adaptive stepsizes.\\
\indent In this paper, we study two new algorithms from the tamed interacting particle Langevin algorithms (tIPLA) class, namely the coordinate wise version, known as tIPLAc, and the uniformly tamed version tIPLAu. Those algorithms are tamed versions of IPLA (developed in \cite{Ipla}) as explained in Subsection \ref{sec3}. The best known rate of convergence is recovered for both algorithms and our estimates are explicit regarding their dependencies on dimension and number of particles employed, which are denoted by $d$ and $N$ respectively.
\subsection{Notation}
We conclude this subsection with some basic notation. For $u,v\in\mathbb{R}^d$, define the scalar product $\langle u,v\rangle=\sum_{i=1}^d u_i v_i$ and the Euclidian norm $|u|={\langle u,u\rangle}^{1/2}$. For all continuously differentiable functions $f:\mathbb{R}^d\to\mathbb{R}$, $\nabla f$ denotes the gradient. The integer part of a real number x is denoted by $\lfloor x \rfloor$. We denote by $\mathcal{P}(\mathbb{R}^d)$ the set of probability measures on $\mathcal{B}(\mathbb{R}^d)$ and for any $p\in\mathbb{N}$, $\mathcal{P}_p(\mathbb{R}^d)=\{\pi\in\mathcal{P}:\int_{\mathbb{R}^d}|x|^p_pd\pi(x)<\infty\}$ denotes the set of all probability measures over $\mathcal{B}(\mathbb{R}^d)$ with finite $p$-th moment. For any two Borel probability measures $\mu$ and $\nu$, we define the Wasserstein distance of order $p\geq 1$ as
$$W_p(\mu,\nu)=\left(\inf_{\zeta\in\prod(\mu,\nu)}\int_{\mathbb{R}^d\times\mathbb{R}^d}|x-y|^p d\zeta(x,y)\right)^{1/p},$$
where $\prod(\mu,\nu)$ is the set of all couplings of $\mu$ and $\nu$. Moreover, for any $\mu,\ \nu\in\mathcal{P}_p(\mathbb{R}^d) $, there exists a coupling $\zeta^*\in\prod(\mu,\nu)$ such that for any coupling $(X,Y)$ distributed according to $\zeta^*$, $W_p(\mu,\nu)=\mathbb{E}^{1/p}\left[\left|X-Y\right|^p\right]$.
\section{Setting and Definitions}
\subsection{Initial setup}
Let $p_{\theta}(x,y)$ be the aforementioned joint probability density function of the latent variable $x$ and for fixed (observed) data $y$. The goal of maximum marginal likelihood estimation (MMLE) is to find the parameter $\theta^*$ that maximises the marginal likelihood, see \cite{EM}. To address the log-density formulation, we define the negative log-likelihood for fixed $y\in\mathbb{R}^{d^y}$ as $$U_{y}(\theta,x):=-\log p_{\theta}(x,y).$$ Thus we arrive at the following notation for the quantity we aim to maximize $$k_{y}(\theta)=q_{\theta}(y)=\int_{\mathbb{R}^{dx}} p_{\theta}(x,y)dx=\int_{\mathbb{R}^{dx}} \exp{(-U_{y}(\theta,x))}dx.$$ Henceforth, for notational brevity, we denote $h_{y}(v)=\nabla U_{y}(v)$ so that $h^{x}_{y}(v)=\nabla_x U_{y}(v)$ and $h^{\theta}_{y}(v)=\nabla_{\theta}U_{y}(v)$, with $v:=(\theta,x)$. Additionally, we drop the reference to the (fixed) data $y$. To summarize, the optimization problem can be described as $$\text{maximize }\mathbb{R}^{d^{\theta}}\ni \theta \rightarrow k(\theta)=\int_{\mathbb{R}^{dx}} \exp{(-U(\theta,x))}dx.$$ Following the convention of \cite{Kuntz}, $N$ particles $\mathcal{X}_t^{i,c,N}$ for $i\in\{1,\ldots,N\}$ are used to estimate the gradient of $k(\theta)$, which are governed by the continuous-time dynamics as given below
\begin{gather}
d\vartheta_t^{c,N}=-\dfrac{1}{N}\sum_{j=1}^N\nabla_{\theta}U(\vartheta_t^{c,N},\mathcal{X}^{j,c,N}_t)dt+\sqrt{\dfrac{2}{N}}dB_t^{0,N},\label{eq:00}\\d\mathcal{X}_t^{i,c,N}=-\nabla_{x}U(\vartheta_t^{c,N},\mathcal{X}^{i,c,N}_t)dt+\sqrt{2}dB_t^{i,N},\ \label{eq:01} \text{ for } i=1,\ldots,N,
\end{gather}
where $\{(B_t^{i,N})_{t\geq 0}\}_{0\leq i\leq N}$ is a family of independent Brownian motions. The discrete time Markov chain associated with the above IPS in equations \eqref{eq:00}-\eqref{eq:01} is obtained by the corresponding Euler-Maruyama discretization scheme of the Langevin SDEs \eqref{eq:00}-\eqref{eq:01}
\begin{align*}
\theta^{{\lambda}_{\text{\tiny{EM}}},c}_0=&\theta_0,\ \theta_{n+1}^{{\lambda}_{\text{\tiny{EM}}},c}=\theta_{n}^{\lambda_{\text{\tiny{EM}}},c}-\dfrac{\lambda}{N}\sum_{i=1}^N h^{\theta}(\theta_{n}^{\lambda_{\text{\tiny{EM}}},c},X_{n}^{i,\lambda_{\text{\tiny{EM}}},c})+\sqrt{\dfrac{2\lambda}{N}}\xi^{(0)}_{n+1},\\
X^{i,\lambda_{\text{\tiny{EM}}},c}_0=&x^i_0,\ X_{n+1}^{i,\lambda_{\text{\tiny{EM}}},c}=X_{n}^{i,\lambda_{\text{\tiny{EM}}},c}-\lambda h^{x}(\theta_{n}^{\lambda_{\text{\tiny{EM}}},c},X_{n}^{i,\lambda_{\text{\tiny{EM}}},c})+\sqrt{2\lambda}\xi^{(i)}_{n+1},\ 
\end{align*}
for $i=1,\ldots,N$, where ${\theta}_0\in\mathbb{R}^{d^{\theta}}$, $x^i_0\in\mathbb{R}^{d^x}$, $\lambda>0$ is the step-size parameter. Moreover, $(\xi^{(0)}_n)_{n\in\mathbb{N}}$ and $(\xi^{(i)}_n)_{n\in\mathbb{N}}$,\ $\forall i\in\{1,\ldots,N\}$, are sequences of i.i.d. standard $d^\theta-$ and $d^x-$dimensional Gaussian variables respectively. A time-scaled version of the original equations \eqref{eq:00}-\eqref{eq:01} is also introduced, playing a central role in the proof strategy as explained in Subsection \hyperref[summary]{3.5}.
\begin{align}
d\vartheta_t^{u,N}=&-\dfrac{1}{N^{p+1}}\sum_{j=1}^N\nabla_{\theta}U(\vartheta_t^{u,N},\mathcal{X}^{j,u,N}_t)dt+\sqrt{\dfrac{2}{N^{p+1}}}dB_t^{0,N},\label{eq:02}\\d\mathcal{X}_t^{i,u,N}=&-\dfrac{1}{N^p}\nabla_{x}U(\vartheta_t^{u,N},\mathcal{X}^{i,u,N}_t)dt+\sqrt{\dfrac{2}{N^p}}dB_t^{i,N},\ \label{eq:03}\text{ for } i=1,\ldots,N,
\end{align}
where $p=2\ell+1$ is controlled by the polynomial growth order $\ell$ in $\nabla U(\theta,x)$, see Remark \hyperlink{Remark1}{1}. Such an adjustment allows the generalization of the results in \cite{Ipla} to handle superlinear drift coefficients while maintaining the same underlying invariant measure $\pi_*^N$. One notes that $\pi_*^N$ is characterized by the density proportional to $\exp(-\sum_{i=1}^NU(\theta,x^i))$ for both systems, as shown in Proposition \hyperlink{Prop1}{1}.
\subsection{Taming approach}\label{sec3}
\subsubsection{A very brief introduction}
\noindent For a fixed $T>0$, consider the SDE given by
\begin{align}
    Y(0)=Y_0,\ dY(t)=b(t,Y(t))dt+\sigma(t,Y(t))dB_t,\ \forall t\in[0,T],\label{eq:04}
\end{align} where $b(t,y)$ and $\sigma(t,y)$ are assumed to be $\mathcal{B}(\mathbb{R}_+)\times \mathcal{B}(\mathbb{R}^d)-$measurable functions and $Y_0$ a random variable. The discrete time Markov chain associated with the ULA algorithm is obtained by the Euler-Maruyama discretization scheme of SDE \eqref{eq:04} and is defined for all $n\in\mathbb{N}$ by
$$Y(t_0)=Y_0,\ Y(t_{n+1})=Y(t_n)-\lambda b(t_n,Y(t_n))+\sqrt{\lambda}\sigma(t_n,Y(t_n))\xi_{n+1},\ n\geq 0,$$
where $\lambda>0$ is the stepsize and $\left(\xi_n\right)_{n\geq 1}$ is an i.i.d. sequence of standard Gaussian random variables. In the case where the drift coefficient $b$ is superlinear, it is shown in \cite{Hutzenthaler3} that ULA is unstable, in the sense that any $p-$absolute moment of the algorithm $(p\geq 1)$ diverges to infinity in finite time as the stepsize converge to zero. In the literature on SDE approximation, a new class of explicit numerical schemes has been introduced to study the case of non-globally Lipschitz conditions. These schemes modify both the drift and diffusion coefficients such that they grow at most linearly, for example see, \cite{kumar2016explicit,Sabanis_2016,Sabanis_2019}. The efficiency of such schemes and their respective properties of $\mathcal{L}^p$ convergence create a strong incentive to extend this technique to sampling and optimization. Typically tamed schemes are given by
$$Y(t_0)=Y_0,\ Y(t_{n+1})=Y(t_n)-\lambda b_{\lambda}(t_n,Y(t_n))+\sqrt{\lambda}\sigma_{\lambda}(t_n,Y(t_n))\xi_{n+1},\ n\geq 0,$$
for an appropriate choice of taming functions $b_{\lambda}:\mathbb{R}^d\mapsto \mathbb{R}^d$ and ${\sigma}_{\lambda}:\mathbb{R}^d\mapsto \mathbb{R}^{d\times d}$. 
\subsubsection{Application in the IPLA framework}\label{subsec2}
We extend the notion of taming to our setting, which involves superlinear drift coefficients, $\nabla U$ and constant diffusion coefficients. To achieve this,  we introduce a family of taming functions $(h_{\lambda})_{\lambda\geq 0}$ with $h_{\lambda}:\mathbb{R}^{d^\theta}\times\mathbb{R}^{d^x}\to \mathbb{R}^{d^\theta}\times\mathbb{R}^{d^x}$ which are close approximations of $\nabla U$ in a manner that is made precise below. We suggest two such taming functions $h_{\lambda}(v)$. One uses a uniform taming approach, while the other employs a coordinate-wise method:
\begin{align}
    h_{\lambda,u}(v)=\dfrac{h(v)-\mu v}{1+{\lambda}^{1/2}N^{-p/2}|h(v)-\mu v|}+\mu v,\label{eq:05} \end{align}\begin{align}
     h_{\lambda,c}(v)=\left(\dfrac{h^{(i)}(v)-\mu v^{(i)}}{1+{\lambda}^{1/2}|h^{(i)}(v)-\mu v^{(i)}|}+\mu v^{(i)}\right)_{i\in\{1,\ldots,d^{\theta}+d^x\}},\label{eq:06}
\end{align}
where $\mu$ is the strong convexity constant given in \hyperlink{Assum2}{A2}. In \cite{TULA}, it is experimentally established that the coordinate-wise version outperforms the uniform taming approach. This result is consistent with the observation that uniform taming does not distinguish between the different levels of contribution each coordinate offers to the gradient. However, there is a trade off for using the coordinate-wise approach, as in order to obtain an appropriate dissipativity condition for the tamed function $h_{\lambda}$, additional smoothness on the potential $U$ is required.\\
\indent We propose two different algorithms for the implementation of interacting particles methods within the tIPLA class, which are determined by the choice of the taming function and the stochastic dynamics. We find that when the uniform taming is used, then the time-scaled dynamics (\ref{eq:02})-(\ref{eq:03}) are more suitable as the new factor $1/N^p$ is essential to guarantee convergence of the algorithms iterates in Wasserstein-2 distance. Hence \eqref{eq:02}-\eqref{eq:03} paired with \eqref{eq:05} leads to the following scheme, for every $n\geq 0$,
\begin{gather}
\theta^{\lambda,u}_0=\theta_0,\ \theta_{n+1}^{\lambda,u}=\theta_{n}^{\lambda,u}-\dfrac{\lambda}{N^{p+1}}\sum_{i=1}^N h_{\lambda,u}^{\theta}(\theta_{n}^{\lambda,u},X_{n}^{i,\lambda,u})+\sqrt{\dfrac{2\lambda}{N^{p+1}}}\xi^{(0)}_{n+1},\label{eq:07}\\
X^{i,\lambda,u}_0=x^i_0,\ X_{n+1}^{i,\lambda,u}=X_{n}^{i,\lambda,u}-\dfrac{\lambda}{N^p} h_{\lambda,u}^{x}(\theta_{n}^{\lambda,u},X_{n}^{i,\lambda,u})+\sqrt{\dfrac{2\lambda}{N^p}}\xi^{(i)}_{n+1},\ \text{ for } i=1,\ldots,N\label{eq:08}.
\end{gather}
In the case of coordinate-wise taming, the extra smoothness required obviates any modification to the original dynamics. Therefore one uses (\ref{eq:00})-(\ref{eq:01}) with \eqref{eq:06} to obtain the second algorithm, for every $n\geq0,$
\begin{gather}
\theta^{\lambda,c}_0=\theta_0,\ \theta_{n+1}^{\lambda,c}=\theta_{n}^{\lambda,c}-\dfrac{\lambda}{N}\sum_{i=1}^N h_{\lambda,c}^{\theta}(\theta_{n}^{\lambda,c},X_{n}^{i,\lambda,c})+\sqrt{\dfrac{2\lambda}{N}}\xi^{(0)}_{n+1},\label{eq:09}\\
X^{i,\lambda,c}_0=x^i_0, X_{n+1}^{i,\lambda,c}=X_{n}^{i,\lambda,c}-\lambda h_{\lambda,c}^{x}(\theta_{n}^{\lambda,c},X_{n}^{i,\lambda,c})+\sqrt{2\lambda}\xi^{(i)}_{n+1},\ \text{ for } i=1,\ldots,N.\label{eq:000}
\end{gather}
It is important to note that, despite the differences in their setups, both algorithms exhibit the same non-asymptotic convergence behavior and dependence on the dimension. These schemes are summarised in the algorithms (\hyperref[algo1]{tIPLAu}) and (\hyperref[algo2]{tIPLAc}) presented below.
\begin{algorithm}[h]
    \caption{Tamed interacting  particle Langevin algorithm (tIPLAu)}\label{algo1}
    \begin{algorithmic}
        \Require N,$\lambda$,$\pi_{\text{init}}\in\mathcal{P}(\mathbb{R}^{d^{\theta}})\times\mathcal{P}((\mathbb{R}^{d^x})^N)$
        \State Draw $(\theta_0,\{X_0^{i,N}\}_{1\leq i\leq N})$ from $\pi_{\text{init}}$
        \For{n = $0$ to $n_T=\lfloor T/\lambda \rfloor$}
            \State $\theta_{n+1}^{\lambda,u}=\theta_{n}^{\lambda,u}-\dfrac{\lambda}{N^{p+1}}\sum_{i=1}^N h^{\theta}_{\lambda,u}(\theta_{n}^{\lambda,u},X_{n}^{i,\lambda,u})+\sqrt{\dfrac{2\lambda}{N^{p+1}}}\xi^{(0)}_{n+1}$\\
            \State $X_{n+1}^{i,\lambda,u}=X_{n}^{i,\lambda,u}-\dfrac{\lambda}{N^p}h^{x}_{\lambda,u}(\theta_{n}^{\lambda,u},X_{n}^{i,\lambda,u})+\sqrt{\dfrac{2\lambda}{N^p}}\xi^{(i)}_{n+1},\ \forall i\in{1,...,N}$
        \EndFor\\
        \Return $\theta_{n_T+1}$
    \end{algorithmic}
\end{algorithm}
It is apparent that the newly proposed algorithms, designated as \hyperref[algo1]{tIPLAu} and \hyperref[algo2]{tIPLAc}, incorporate foundational elements from \cite{Kuntz}, which employed deterministic dynamics in the $\theta$ component. Furthermore, their design benefits from the approach in \cite{Ipla} which introduced stochastic noise in the gradient flow of $\theta$, enabling explicit calculation of the invariant measure. The key addition of our approach lies in the application of a taming technique via the recalibration of the original drift terms, $h_x$ and $h_{\theta}$, into tamed counterparts, as described in \eqref{eq:05} and \eqref{eq:06}. This adjustment is crucial in stabilizing the algorithm and deriving non-asymptotic results.
\begin{algorithm}
    \caption{Tamed interacting  particle Langevin algorithm - coordinate-wise (tIPLAc)}\label{algo2}
    \begin{algorithmic}
        \Require N,$\lambda$,$\pi_{\text{init}}\in\mathcal{P}(\mathbb{R}^{d^{\theta}})\times\mathcal{P}((\mathbb{R}^{d^x})^N)$
        \State Draw $(\theta_0,\{X_0^{i,N}\}_{1\leq i\leq N})$ from $\pi_{\text{init}}$
        \For{n = $0$ to $n_T=\lfloor T/\lambda \rfloor$}
            \State $\theta_{n+1}^{\lambda,c}=\theta_{n}^{\lambda,c}-\dfrac{\lambda}{N}\sum_{i=1}^N h^{\theta}_{\lambda,c}(\theta_{n}^{\lambda,c},X_{n}^{i,\lambda,c})+\sqrt{\dfrac{2\lambda}{N}}\xi^{(0)}_{n+1}$\\
            \State $X_{n+1}^{i,\lambda,c}=X_{n}^{i,\lambda,c}-\lambda h^{x}_{\lambda,c}(\theta_{n}^{\lambda,c},X_{n}^{i,\lambda,c})+\sqrt{2\lambda}\xi^{(i)}_{n+1},\ \forall i\in{1,...,N}$
        \EndFor\\
        \Return $\theta_{n_T+1}$
    \end{algorithmic}
\end{algorithm}
\subsection{Essential quantities and proof strategy of main result}\label{essential}
In this subsection, we lay out the analytical framework and convergence properties of the sequences $(\theta_n^{\lambda,\cdot})_{n\geq 0}$ of either \hyperref[algo1]{tIPLAu} $(\theta_n^{\lambda,u})_{n\geq 0}$ or \hyperref[algo2]{tIPLAc} $(\theta_n^{\lambda,c})_{n\geq 0}$, resulting in their convergence to the maximiser $\theta^*$. A central concept of the technical analysis is the rescaling of the original dynamics of the following form \begin{align} \mathcal{Z}_t^{\cdot,N}=\left(\vartheta_t^{\cdot,N},N^{-1/2}\mathcal{X}_t^{1,\cdot,N},\ldots,N^{-1/2}\mathcal{X}_t^{N,\cdot,N}\right), \label{eq:001}\end{align} along with the corresponding  rescaling that is required for the algorithms and their continuous time interpolations, see more details in Appendix \hyperref[appendix1]{A}. This rescaled version of the particle system is essential in bridging the dynamics represented by either (\ref{eq:00})-(\ref{eq:01}) or (\ref{eq:02})-(\ref{eq:03}) with the pairs $\mathcal{V}_t^{i,\cdot,N}:=(\vartheta_t^{\cdot,N},\mathcal{X}_t^{i,\cdot,N})$ generated by the algorithms. The essence of this rescaling lies in its ability to equate the moments of $\mathcal{Z}_t^{\cdot,N}$ with the averaged moments of the pairs $\mathcal{V}_t^{i,\cdot,N}$ across the $\mathcal{L}^2$ norm. This rescaling choice yields the critical property $$|\mathcal{Z}_t^{\cdot,N}|^2=\dfrac{1}{N}\sum_{i=1}^N|\mathcal{V}_t^{i,\cdot,N}|^2.$$It is imperative to note that this convenient rescaling does not detract from the analysis objectives, as our primary interest is in the convergence of the $\theta-$component. The introduction of particles serves primarily to facilitate sampling, hence their specific scaling does not impact the core convergence analysis.\\
\indent In the present analysis, the interacting  particle systems defined in equations (\ref{eq:00})-(\ref{eq:01}) and (\ref{eq:02})-(\ref{eq:03}) are deliberately constructed to target the same invariant measure $\pi_*^N$ characterized by the density $Z^{-1}\exp(-\sum_{i=1}^N U(\theta,x^i))$. This aspect is critical as it assigns the number of particles $N$ a role akin to the inverse temperature parameter encountered in simulated annealing algorithms, thereby regulating the concentration of the $\theta-$marginal of the invariant measure  $\pi_{\Theta}^N$, towards the maximiser $\theta^*$. This behavior, wherein $\pi_{\Theta}^N\to\delta_{\theta^*}$ as $N\to\infty$, is established in Proposition \hyperlink{Prop2}{2}.\\
\indent Moreover, the convergence rate of the Langevin diffusion to its invariant measure is a standard result, one  could consult \cite{Durmus2} and the references within. This is further generalized to particle systems in the research conducted by \cite{Ipla}. By integrating their findings into our framework, we arrive at Proposition \hyperlink{Prop3}{3} and \hyperlink{Prop7}{4}. Equipped with these guarantees, we obtain the convergence of the underlying dynamics to their invariant measure, and subsequently our analysis focuses to the discretization errors inherent in the proposed schemes.\\
\indent One expects a dimensional scaling of at least $\mathcal{O}(d^{1/2})$ for the Wasserstein-2 numerical error of Langevin based algorithms, for example \cite{Durmus1}. In the context of the E optimization where $N$ particles are generated, the true dimensionality of the problem is $d^{\theta}+N\cdot d^{x}$, which can be naively interpreted as the need to increase the number of iterations to achieve convergence as $N$ increases. However, this dependency on $N$ is effectively mitigated by making use of the implied symmetry within the dynamics of the interacting  particle system by considering only the $\theta$-marginal in the analysis. Such an approach enables us to attain the classical Euler-Maruyama convergence rate for the numerical solutions of stochastic differential equations, as demonstrated in Proposition \hyperlink{Prop5}{5} and \hyperlink{Prop6}{6}.\\
\indent In closing, one decomposes the global $\mathcal{L}^2$-error, denoted as $\left(\mathbb{E}\left[|{\theta}^{*}-{\theta}^{\lambda,\cdot}_n|^2\right]\right)^{1/2}$, into three distinct components $$\mathbb{E}^{1/2}\left[|{\theta}^{*}-{\theta}^{\lambda,\cdot}_n|^2\right]\leq W_2(\delta_{\theta^*},\pi_{\Theta}^N)+W_2(\pi_{\Theta}^N,\mathcal{L}(\vartheta^{\cdot,N}_{n\lambda}))+W_2(\mathcal{L}(\vartheta^{\cdot,N}_{n\lambda}),\mathcal{L}(\theta_n^{\lambda,\cdot})),$$ where ${\theta}^{\lambda,\cdot}_n$ stands for the iterates of either of the algorithms, \hyperref[algo1]{tIPLAu} and \hyperref[algo2]{tIPLAc} alike. Moreover, $W_2(\delta_{\theta^*},\pi_{\Theta}^N)$ quantifies the deviation of the invariant measure from the maximiser, $W_2(\pi_{\Theta}^N,\mathcal{L}(\vartheta^{\cdot,N}_{n\lambda}))$ captures the discrepancy between the law of the dynamics and their invariant measure, and $W_2(\mathcal{L}(\vartheta^{\cdot,N}_{n\lambda}),\mathcal{L}(\theta_n^{\lambda,\cdot}))$ encompasses the error induced by the discretization scheme, namely, the divergence between the law of the iterations of our algorithms and their continuous counterparts.
\section{Main Assumptions and Results}
 In this section, we provide the assumptions regarding the potential $U(u)$ and its gradient  $h(u)=\nabla U(u)$ that define the framework, under which the main results for the non-asymptotic behavior of the newly proposed algorithms, \hyperref[algo1]{tIPLAu} and \hyperref[algo2]{tIPLAc}, are derived.
\par \noindent Let $d:=d^{\theta}+d^x$, and $U$ be a $C^1\left(\mathbb{R}^d\right)$ function. We require that $h(v):=h(\theta,x)$ is a locally Lipschitz function in $\theta$ and $x$.\\\hypertarget{Assum1}{} \\
\noindent \textbf{A1}. There exist $L>0$ and $\ell>0$ such that
$$|h(v)-h(v')|\leq L\left(1+|v|^{\ell}+|v'|^{\ell}\right)|v-v'|,\ \forall v,v'\in\mathbb{R}^d.$$
\noindent Additionally we require $U$ to be $\mu-$strongly convex.\\ \hypertarget{Assum2}{} \\
\noindent \textbf{A2}. There exists $\mu> 0$ such that
$$\left\langle v-v',h(v)-h(v')\right\rangle\geq \mu |v-v'|^2,\ \forall v,v'\in\mathbb{R}^d.$$
Assumption \hyperlink{Assum1}{A1} is a significant relaxation of the global Lipschitz condition which is widely used in the literature. It allows the gradient $h$ to grow polynomially fast at infinity. Assumption \hyperlink{Assum2}{A2} guarantees that $U$ has a unique minimiser and also can be seen as a monotonicity type condition, which is satisfied by the drift coefficients of the corresponding SDEs.\\
\indent We also present a growth condition for the gradient $\nabla U$, which is essential in the case where the coordinate-wise taming function \eqref{eq:06} is used.\\ \hypertarget{Assum3a}{}\\
\noindent \textbf{A3-i}. For each $i$ in $\{1,\ldots,d\}$ there exists $\mu>0$ such that $$h^{(i)}(v)v^{(i)}\geq \dfrac{\mu}{2} |v^{(i)}|^2-\dfrac{1}{2\mu}|h^{(i)}_0|^2.$$ In the special case where $d^x=d^\theta$ \hyperlink{Assum3a}{A3-i} can be slightly relaxed to the following form.\\\hypertarget{Assum3b}{}\\
\noindent \textbf{A3-ii}.
\noindent Let $d^x=d^{\theta}=d$, then for each $i$ in $\{1,\ldots,d\}$ there exist $\mu>0$, $\rho\geq 0$ satisfying the constrain $4\rho<\mu$, such that $$h^{{\theta}_i}(v){\theta}_i\geq \dfrac{\mu}{2} |{\theta}_i|^2-\rho|{x}_i|^2-\dfrac{1}{2\mu}|h^{\theta_i}_0|^2 \text{ and } h^{x_i}(v)x_i\geq \dfrac{\mu}{2} |x_i|^2-\rho|{\theta}_i|^2-\dfrac{1}{2\mu}|h^{x_i}_0|^2.$$
\noindent Assumption \hyperlink{Assum3a}{A3} introduces a coordinate-wise dissipativity type condition, which plays a critical role in establishing moment bounds for \hyperref[algo2]{tIPLAc}. Its necessity stems from the fact that Remark \hyperlink{Remark2}{2} does not guarantee that the taming function \eqref{eq:06} preserves the dissipativity of the drift $h(v)=\nabla U(v)$.\\ 
\indent We emphasize that Assumption \hyperlink{Assum3a}{A3} is primarily influenced by the structural properties of the taming function. This assumption could potentially be relaxed to require only argument-wise dissipativity, as described by the following mathematical formulation: $$h^{\theta}(v)\theta\geq \dfrac{\mu}{2} |\theta|^2-\rho|x|^2-\dfrac{1}{2\mu}|h^{\theta}_0|^2 \text{ and } h^{x}(v)x\geq \dfrac{\mu}{2} |x|^2-\rho|\theta|^2-\dfrac{1}{2\mu}|h^{x}_0|^2.$$ This relaxation is applicable when considering the taming function $$h_{\lambda,a}^w(v)=\dfrac{h^{w}(v)-\mu w}{1+{\lambda}^{1/2}|h^{w}(v)-\mu w|}+\mu w\ \text{for } w=\theta,\ x,$$ which represents an intermediate scenario bridging the uniform and coordinate-wise approaches. Such consideration leads to an alternative formulation of the tIPLA algorithm. However, this variant of the tIPLA algorithm is not of primary interest due to its inherent limitations. In particular, it fails to adequately discriminate between the components of the gradient that exhibit explosive behavior and those that do not. While simultaneously, this alternative formulation imposes more stringent requirements than those necessitated by the uniform case. Lastly, we proceed with an assumption on starting values for the tIPLA algorithms.\\ \hypertarget{Assum4} \\ \noindent \textbf{A4}. The initial condition $z_0^N=\left(\theta_0,N^{-1/2}x_0^1,\ldots,N^{-1/2}x_0^N\right)$ is such that $$\mathbb{E}\left[|z_0^N|^{2p_0}\right]< \infty,$$ where $p_0=2(\ell+1)$ and $\ell$ is the order of polynomial growth of $\nabla U(\theta,x)$ as given in Assumption \hyperlink{Assum1}{A1}.\hypertarget{Remark1}{}
\begin{remark} Assumption \hyperlink{Assum1}{A1} imposes a restriction on the growth of $h(v)=\nabla U(v)$, that is \begin{align}h(v)\leq K(1+|v|^{\ell+1})\label{eqR1},\ \forall v\in \mathbb{R}^{d},\end{align} where $K=2L+|h_0|$.\end{remark} \hypertarget{Remark2}{}
\begin{remark} From Assumption \hyperlink{Assum2}{A2} (strong convexity) one deduces that it implies dissipativity \begin{align}\langle v,h(v)\rangle\geq \dfrac{\mu}{2}|v|^2-b\label{eqR2},\ \forall v\in \mathbb{R}^{d},\end{align} where $b=\dfrac{1}{2\mu}|h_0|^2$.\end{remark}
\subsection{Taming functions and inherited properties}
In this subsection, we establish important properties of the proposed taming functions.\\ \hypertarget{Property1}{}
\par \noindent \textbf{Property 1}. For all $\lambda>0$ and  $v\in\mathbb{R}^{d}$, it holds that 
\begin{align*}
|h_{\lambda,u}(v)|&\leq \mu|v|+{\lambda}^{-1/2}N^{p/2},\\
|h_{\lambda,c}(v)|&\leq \mu|v|+(d^{\theta}+d^x){\lambda}^{-1/2}.
\end{align*}
\noindent The gradient $h$ is sufficiently close to its tamed counterparts $h_{\lambda,u}, h_{\lambda,c}$ for $\lambda>0$ small enough.\\ 
\hypertarget{Property2}{} 
\par \noindent \textbf{Property 2}. For all $\lambda>0$ and  $v\in\mathbb{R}^{d}$, it holds that
\begin{align*}
|h_{\lambda,u}(v)-h(v)|&\leq C_1{\lambda}^{1/2}N^{-p/2}(1+|v|^{2(\ell+1)}),\\
|h_{\lambda,c}(v)-h(v)|&\leq C_1{\lambda}^{1/2}(1+|v|^{2(\ell+1)}).    
\end{align*}
\noindent where $C_1=2^{2(\ell+3/2)}\max\{K^2,\mu^2\}$.\\
\par \noindent The taming functions $h_{\lambda,u}, h_{\lambda,c}$ inherit the dissipativity condition established in Remark \hyperlink{Remark2}{2}.\\ 
\hypertarget{Property3}{}
\par \noindent \textbf{Property 3}. For all $\lambda>0$ and  $v\in\mathbb{R}^{d}$, it holds that $$\langle v,h_{\lambda,u}(v)\rangle\geq \dfrac{\mu}{2}|v|^2-b,\ \text{and } \langle v,h_{\lambda,c}(v)\rangle\geq \dfrac{\mu}{2}|v|^2-b,$$
\noindent where $b$ is given in Remark \hyperlink{Remark2}{2}. We note that in the case of the coordinate-wise taming function to be dissipative, we require the additional smoothness provided by Assumption \hyperlink{Assum3a}{A3}.\\\newline In the context of a unified framework for Langevin based algorithm under superlinear gradients, as established in \cite{lim2021polygonal}, the properties 1-3 therein are recovered for $\delta=2$ and $\gamma=1/2$ which aligns with TH$\epsilon$O POULA \cite{lim2023langevin} and TUSLA \cite{lovas2023taming} algorithms.
\subsection{Preliminary Theoretical Statements}

 Both interacting SDEs defined by \eqref{eq:00}-\eqref{eq:01} and \eqref{eq:02}-\eqref{eq:03} respectively, admit a strong solution under \hyperlink{Assum1}{A1}-\hyperlink{Assum2}{A2}. This follows since each of the drift coefficients of the SDEs are locally Lipschitz functions and dissipative, in view of Remark  \hyperlink{Remark2}{2}, whereas the diffusion coefficients are constant. One could consult \cite{Mao} (Theorem 2.3.5) for more details.\label{statements} Moreover, both systems exhibit the same invariant measure, as one would expect from a standard Langevin diffusion.\\\raggedright
 
\begin{proposition}\hypertarget{Prop1}{} Let \hyperlink{Assum1}{A1} and \hyperlink{Assum2}{A2} hold. Then, the measure $\pi_{*}^N$ characterized by the density $Z^{-1}\exp{\left(-\sum_{i=1}^N U(\theta,x^i)\right)}$, with $Z$ being the normalizing constant, is the invariant measure for both interacting particles systems \eqref{eq:00}-\eqref{eq:01} and \eqref{eq:02}-\eqref{eq:03}.\end{proposition}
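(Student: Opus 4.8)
The plan is to verify directly that $\pi_*^N$ is stationary for each system by checking that the generator of the diffusion annihilates $\pi_*^N$ — equivalently, that the Fokker--Planck (forward Kolmogorov) operator applied to the density $\rho(v) := Z^{-1}\exp\bigl(-\sum_{i=1}^N U(\theta,x^i)\bigr)$ vanishes, where $v = (\theta, x^1,\dots,x^N)$. Both \eqref{eq:00}--\eqref{eq:01} and \eqref{eq:02}--\eqref{eq:03} are Langevin-type SDEs whose drift is (up to a positive prefactor) the gradient of $\Phi(v) := \sum_{i=1}^N U(\theta,x^i)$ and whose diffusion matrix is the \emph{same} positive prefactor times the identity on each block. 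This is precisely the structure $dV_t = -D\nabla\Phi(V_t)\,dt + \sqrt{2D}\,dB_t$ with $D$ a constant positive-definite (here block-scalar) matrix, for which the Gibbs measure $\propto e^{-\Phi}$ is the reversible invariant measure. So the core of the argument is to cast both systems into this canonical form and then invoke (or reprove in one line) the standard fact.

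Concretely, I would first compute the blockwise drift/diffusion data. For \eqref{eq:00}--\eqref{eq:01}: the $\theta$-block has drift $-\tfrac1N\sum_j \nabla_\theta U(\theta,x^j) = -\tfrac1N\nabla_\theta\Phi(v)$ and diffusion coefficient $\sqrt{2/N}$, hence diffusion-matrix block $\tfrac1N I_{d^\theta}$; each $x^i$-block has drift $-\nabla_x U(\theta,x^i) = -\nabla_{x^i}\Phi(v)$ and diffusion-matrix block $I_{d^x}$. Thus $D = \mathrm{diag}(\tfrac1N I_{d^\theta}, I_{d^x},\dots, I_{d^x})$ and the drift is exactly $-D\nabla\Phi$, with diffusion $\sqrt{2D}$. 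For the time-scaled system \eqref{eq:02}--\eqref{eq:03} the same holds with $D = \mathrm{diag}(N^{-(p+1)} I_{d^\theta}, N^{-p} I_{d^x},\dots, N^{-p} I_{d^x})$ — note the ratio between the $\theta$-block and the $x$-blocks is again $1/N$, which is what makes the \emph{same} $\rho$ stationary; the overall factor $N^{-p}$ is just a global time change and does not affect the invariant measure.

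Next I would write the stationary Fokker--Planck identity. With $L^*\rho = \sum_k \partial_{v_k}\bigl( (D\nabla\Phi)_k \rho \bigr) + \sum_{k} D_{kk}\,\partial_{v_k}^2 \rho$ (the matrix $D$ being diagonal), and using $\nabla\rho = -\rho\,\nabla\Phi$, one gets $\sum_k D_{kk}\partial_{v_k}^2\rho = \sum_k D_{kk}\partial_{v_k}(-\rho\,\partial_{v_k}\Phi) = -\sum_k \partial_{v_k}\bigl(D_{kk}(\partial_{v_k}\Phi)\rho\bigr) = -\sum_k\partial_{v_k}\bigl((D\nabla\Phi)_k\rho\bigr)$, so the two terms cancel and $L^*\rho = 0$. (Equivalently, one checks the detailed-balance/reversibility condition $(D\nabla\Phi)\rho = D\nabla\rho \cdot(-1)$, i.e. the probability current $J = -(D\nabla\Phi)\rho - D\nabla\rho$ vanishes identically.) Some integrability/smoothness bookkeeping is needed to justify that $\rho\in L^1$ (finite normalizing constant $Z$) and that it lies in the domain of $L^*$ — this is where \aref{Assum2} (strong convexity of $U$, giving Gaussian-type tails on $\rho$) and the $C^1$ assumption enter; the polynomial growth of $h = \nabla U$ from \aref{Assum1} together with the tail decay makes all the integrations by parts legitimate.

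I expect the main obstacle to be essentially bookkeeping rather than conceptual: organizing the two different scalings cleanly so the cancellation is transparent, and being careful that the \emph{shared} invariant measure really works for \emph{both} diffusion matrices — the point being that $\rho$ depends only on $\Phi$, and the Fokker--Planck cancellation above holds for \emph{any} constant diagonal $D>0$, so the specific values $1/N$, $N^{-p}$, $N^{-(p+1)}$ are immaterial. A secondary subtlety is that uniqueness of the invariant measure is not claimed here (only invariance), so I would not need ergodicity arguments at this stage; it suffices to exhibit $\pi_*^N$ and verify stationarity. If the authors prefer a semigroup/weak formulation, the same computation can be phrased as $\int L\varphi\, d\pi_*^N = 0$ for all $\varphi\in C_c^\infty$, integrating by parts once and using $\nabla\rho = -\rho\nabla\Phi$ to see the drift and Laplacian contributions cancel termwise over each coordinate block.
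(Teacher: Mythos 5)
Your proposal is correct in its core and, for the identification step, coincides with what the paper actually does: the paper's proof also ends by verifying $\int \mathcal{L}\phi\, d\pi_*^N = 0$ for $\phi\in C_c^\infty(\mathbb{R}^d)$, which is exactly your Fokker--Planck cancellation in weak form, and your observation that both systems have drift $-D\nabla\Phi$ with diffusion $\sqrt{2D}$ for a constant block-diagonal $D$ (with the $\theta$-to-$x$ ratio $1/N$ in both cases, the global factor $N^{-p}$ being a mere time change) is precisely the right structural reason the \emph{same} Gibbs density works for both systems. Where you diverge from the paper is in the scaffolding around this computation. The paper first establishes \emph{existence} of an invariant measure abstractly via Krylov--Bogoliubov (using the uniform moment bounds of Lemma 1 for tightness) and then \emph{uniqueness} via Theorem 7.16 of Da Prato, and only afterwards identifies the invariant measure as $\pi_*^N$. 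Your route dispenses with the abstract existence argument --- legitimately, since directly exhibiting and verifying a candidate is a stronger form of existence --- but you explicitly decline to address uniqueness, whereas the proposition asserts that $\pi_*^N$ is \emph{the} invariant measure and the paper's proof does supply a uniqueness argument. This is the one substantive gap: either import the Da Prato uniqueness criterion as the paper does, or note that the exponential $W_2$-contraction established later (Propositions 3 and 4) forces uniqueness within $\mathcal{P}_2$. A secondary, shared caveat: checking $\int \mathcal{L}\phi\, d\pi_*^N=0$ on test functions gives infinitesimal invariance, and passing to genuine invariance of the semigroup requires well-posedness of the associated martingale problem (available here from the strong existence and uniqueness noted at the start of Subsection 3.2); the paper glosses over this at the same level you do, so it is not a defect relative to the reference proof.
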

\begin{proof}
    The existence of an invariant measure is established by invoking the Krylov-Bogoliubov Theorem, as presented in Theorem 7.1 \cite{Prato}. This theorem guarantees the existence of an invariant measure for a Markov process generated by its corresponding semigroup, given a tightness condition on the sequence of probability measured associated with that process. According to Da Prato's Proposition 7.10 \cite{Prato}, the establishment of uniform moment bounds, provided in Lemma \hyperref[lemma0]{1}, serves as a sufficient condition for tightness. The uniqueness of the invariant measure follows from Theorem 7.16 (ii) \cite{Prato}, as Hypothesis 7.13 is satisfied due to \hyperlink{Assum1}{A1} and \hyperlink{Assum2}{A2}. Furthermore, one can verify that $\pi_{*}^N$ is indeed the invariant measure in discussion by checking that for any $\phi\in C^{\infty}_c(\mathbb{R}^d)$ one has $\int_{\mathbb{R}^d}\mathcal{L}\phi \ d\pi_{*}^N=0$, where $\mathcal{L}$ is the infinitesimal generator of the process. See the proof of Proposition 2 in \cite{Ipla} for more details.
\end{proof}
\noindent Henceforth we consider exclusively the $\theta-$marginal of the invariant measure. We show that indeed the number of particles $N$, plays the role of the inverse temperature parameter in temperature annealing algorithms in the sense that it provides control over the marginal's concentration around the maximiser.\\ 
\par \noindent \begin{proposition}\hypertarget{Prop2}{} Let $\pi_{\Theta}^N$ denote the $\theta-$marginal of the invariant measure and $\theta^*$ be the maximiser of $k(\theta)$. Then, under \hyperlink{Assum2}{A2}, for any $N\in\mathbb{N}$, the following bound holds
$$W_2(\pi_{\Theta}^N,\delta_{\theta^*})\leq \sqrt{\dfrac{d^{\theta}}{\mu N}}.$$\end{proposition}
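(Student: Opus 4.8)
The plan is to make the $\theta$-marginal of $\pi_*^N$ explicit, show that it is strongly log-concave, and then reduce the Wasserstein bound to a second-moment estimate obtained by integration by parts. Integrating the joint density $Z^{-1}\exp(-\sum_{i=1}^N U(\theta,x^i))$ over $x^1,\dots,x^N$ and using the product structure gives
\[
\pi_\Theta^N(\theta)=Z^{-1}\Big(\int_{\mathbb{R}^{d^x}}e^{-U(\theta,x)}dx\Big)^N=Z^{-1}k(\theta)^N=Z^{-1}e^{-N\tilde U(\theta)},\qquad \tilde U:=-\log k.
\]
The first key step is to prove that $\tilde U$ is $\mu$-strongly convex. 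Writing $U(\theta,x)=\tfrac{\mu}{2}|\theta|^2+W(\theta,x)$, Assumption \hyperlink{Assum2}{A2} (equivalently, joint $\mu$-strong convexity of $U$) makes $W$ jointly convex in $(\theta,x)$; by the Prékopa–Leindler theorem (log-concavity is preserved under marginalisation), $\theta\mapsto\int e^{-W(\theta,x)}dx$ is log-concave, hence $\tilde U(\theta)=\tfrac{\mu}{2}|\theta|^2-\log\int e^{-W(\theta,x)}dx$ is $\mu$-strongly convex. Consequently $\tilde U$ has a unique minimiser, which is exactly $\theta^*$ (the maximiser of $k$), so $\nabla\tilde U(\theta^*)=0$, and $\pi_\Theta^N$ is $N\mu$-strongly log-concave, which in particular yields Gaussian-type tails and finite moments of all orders.

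Since $\delta_{\theta^*}$ admits only the trivial coupling with any probability measure, $W_2^2(\pi_\Theta^N,\delta_{\theta^*})=\int_{\mathbb{R}^{d^\theta}}|\theta-\theta^*|^2\,d\pi_\Theta^N(\theta)$, so it remains to bound this integral by $d^\theta/(\mu N)$. Using $\nabla e^{-N\tilde U}=-N\,\nabla\tilde U\,e^{-N\tilde U}$ and the divergence theorem (the boundary terms vanishing because of the Gaussian decay of $\pi_\Theta^N$), one obtains the identity
\[
\int_{\mathbb{R}^{d^\theta}}\langle\theta-\theta^*,\nabla\tilde U(\theta)\rangle\,d\pi_\Theta^N(\theta)=\frac{d^\theta}{N}.
\]
On the other hand, $\mu$-strong convexity of $\tilde U$ together with $\nabla\tilde U(\theta^*)=0$ gives the pointwise lower bound $\langle\theta-\theta^*,\nabla\tilde U(\theta)\rangle\geq\mu|\theta-\theta^*|^2$. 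Integrating this against $\pi_\Theta^N$ and combining with the identity above yields $\mu\int|\theta-\theta^*|^2\,d\pi_\Theta^N\leq d^\theta/N$, i.e. $W_2(\pi_\Theta^N,\delta_{\theta^*})\leq\sqrt{d^\theta/(\mu N)}$.

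The main obstacle is the strong convexity of the marginal potential $\tilde U=-\log k$: this does not follow directly from \hyperlink{Assum2}{A2} and requires the Prékopa–Leindler machinery (or, under the additional assumption $U\in C^2$, a Schur-complement argument showing $\nabla^2\tilde U\succeq\mu I$). Everything else is routine once that is in place — one only needs to justify the vanishing of the boundary terms in the integration by parts and the finiteness of the integrals involved, both of which are immediate consequences of the $N\mu$-strong log-concavity of $\pi_\Theta^N$.
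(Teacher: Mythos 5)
Your proposal is correct and follows essentially the same route as the paper: the paper's proof simply cites Proposition 3 of \cite{Ipla} for the second-moment bound of a strongly log-concave marginal together with Theorem 3.8 of \cite{convex} for the fact that strong log-concavity survives marginalisation, and your argument is a self-contained unwinding of exactly those two ingredients (Pr\'ekopa--Leindler for the $\mu$-strong convexity of $-\log k$, then integration by parts against $e^{-N\tilde U}$ to get $\mu\,\mathbb{E}_{\pi_\Theta^N}|\theta-\theta^*|^2\leq d^\theta/N$). No gaps.
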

\begin{proof}
    Follows from Proposition 3 in \cite{Ipla}, with the  observation that strong log-concavity is preserved under marginalization, see Theorem 3.8 in \cite{convex}.
\end{proof}
\noindent Next, we need the following ergodicity result to obtain the convergence of the law of the IPS to the invariant measure.\\ 
\par \noindent \begin{proposition}\hypertarget{Prop3}{} Let \hyperlink{Assum1}{A1}, \hyperlink{Assum2}{A2} and \hyperlink{Assum4}{A4} hold and consider the first component, namely $\vartheta^{u,N}_t$, of the continuous dynamics \eqref{eq:02}-\eqref{eq:03}. Then, for any $N\in\mathbb{N}$,
$$W_2\left(\mathcal{L}(\vartheta^{u,N}_t),\pi_{\Theta}^N\right)\leq \exp{(-\mu t/N^{p})}\left(\left(\mathbb{E}\left[|z_0^N-z^*|^2\right]\right)^{1/2}+\left(\dfrac{d^x N+d^{\theta}}{\mu N^{p+1}}\right)^{1/2}\right).$$\end{proposition}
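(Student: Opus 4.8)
\noindent\emph{Proof plan.} The plan is to transport the estimate to the rescaled particle system, in which the time-scaled dynamics \eqref{eq:02}--\eqref{eq:03} collapse into a single Langevin diffusion driven by a strongly convex potential, and then run the classical synchronous-coupling argument; this parallels Proposition 3 of \cite{Ipla} (see also \cite{Durmus2}), the new feature being the $N^{-p}$ time-scale. First I would introduce the rescaled process $\mathcal{Z}_t^{u,N}=(\vartheta_t^{u,N},N^{-1/2}\mathcal{X}_t^{1,u,N},\ldots,N^{-1/2}\mathcal{X}_t^{N,u,N})$ of \eqref{eq:001} (the bookkeeping being carried out in Appendix \hyperref[appendix1]{A}). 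A direct computation shows that in these coordinates \eqref{eq:02}--\eqref{eq:03} become $d\mathcal{Z}_t^{u,N}=-N^{-(p+1)}\nabla\bar U(\mathcal{Z}_t^{u,N})\,dt+\sqrt{2N^{-(p+1)}}\,d\tilde B_t$ on $\mathbb{R}^{d^\theta+Nd^x}$, where $\bar U(\mathcal{Z})=\sum_{i=1}^N U(\theta,N^{1/2}z^i)$: the differing $N^{-p}$ and $N^{-(p+1)}$ speeds of the $x$- and $\theta$-blocks are equalised by the $N^{-1/2}$ rescaling of the particles, producing an isotropic noise. By Proposition \hyperlink{Prop1}{1} and the change of variables, the invariant law of this diffusion is the pushforward $\bar\pi_*^N$ of $\pi_*^N$, which is proportional to $e^{-\bar U}$ and whose $\theta$-marginal is exactly $\pi_\Theta^N$, while the first block of $\mathcal{Z}_t^{u,N}$ is $\vartheta_t^{u,N}$ itself. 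Moreover, writing $\mathcal{Z}=(\theta,N^{-1/2}x^1,\ldots,N^{-1/2}x^N)$ and $\mathcal{Z}'=(\theta',N^{-1/2}y^1,\ldots,N^{-1/2}y^N)$, one has $\langle\mathcal{Z}-\mathcal{Z}',\nabla\bar U(\mathcal{Z})-\nabla\bar U(\mathcal{Z}')\rangle=\sum_{i=1}^N\langle(\theta,x^i)-(\theta',y^i),\,h(\theta,x^i)-h(\theta',y^i)\rangle\ge\mu N|\mathcal{Z}-\mathcal{Z}'|^2$ by \hyperlink{Assum2}{A2}, i.e.\ $\bar U$ is $\mu N$-strongly convex.

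Next I would run a synchronous coupling of $\mathcal{Z}_t$ started at $z_0^N$ with $\mathcal{Z}_t^{\star}$ started at $\bar\pi_*^N$, joining the initial laws by an optimal $W_2$-coupling and driving both processes by the same $\tilde B$. The noise cancels, so the monotonicity above gives $\frac{d}{dt}|\mathcal{Z}_t-\mathcal{Z}_t^{\star}|^2\le-2\mu N\cdot N^{-(p+1)}|\mathcal{Z}_t-\mathcal{Z}_t^{\star}|^2=-2\mu N^{-p}|\mathcal{Z}_t-\mathcal{Z}_t^{\star}|^2$, whence $\E|\mathcal{Z}_t-\mathcal{Z}_t^{\star}|^2\le e^{-2\mu t/N^{p}}\,W_2^2(\mathcal{L}(z_0^N),\bar\pi_*^N)$. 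Since projection onto the first block does not increase $W_2$, this yields $W_2(\mathcal{L}(\vartheta_t^{u,N}),\pi_\Theta^N)\le W_2(\mathcal{L}(\mathcal{Z}_t^{u,N}),\bar\pi_*^N)\le e^{-\mu t/N^{p}}\,W_2(\mathcal{L}(z_0^N),\bar\pi_*^N)$.

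It then remains to bound the initial distance to stationarity. With $z^{\star}$ the (rescaled) minimiser of $\bar U$, i.e.\ the mode of $\bar\pi_*^N$, the triangle inequality gives $W_2(\mathcal{L}(z_0^N),\bar\pi_*^N)\le(\E|z_0^N-z^{\star}|^2)^{1/2}+(\E_{Z\sim\bar\pi_*^N}|Z-z^{\star}|^2)^{1/2}$; the first term is finite by \hyperlink{Assum4}{A4}, and the second is handled by the standard moment bound for strongly log-concave measures — the same $W_2$-to-Dirac estimate used in Proposition \hyperlink{Prop2}{2} (cf.\ \cite{convex}, with the uniform moment bounds of Lemma \hyperref[lemma0]{1} available as an alternative) — which produces the additive term in the statement; combining with the previous display concludes the proof. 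The one genuinely delicate step is the first: checking that the rescaling turns \eqref{eq:02}--\eqref{eq:03} into an isotropic Langevin diffusion for the $\mu N$-strongly convex potential $\bar U$, so that the particle time-scale $N^{-p}$ is precisely what produces the contraction rate $\mu/N^{p}$, and then keeping the $N$- and $d$-dependence in the stationary moment bound sharp (this is where the exact form of the additive term originates). The remainder is the routine Langevin-contraction computation.
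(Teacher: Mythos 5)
Your route is essentially the paper's: pass to the rescaled process $\mathcal{Z}_t^{u,N}$ of \eqref{eq:001}, run a synchronous coupling with a stationary copy to obtain the contraction $e^{-\mu t/N^{p}}$, project onto the $\theta$-block, and split the initial distance by the triangle inequality into $(\mathbb{E}[|z_0^N-z^*|^2])^{1/2}$ plus a stationary second-moment term. All of your steps up to and including the contraction check out: the rescaled system is indeed the isotropic diffusion $d\mathcal{Z}=-N^{-(p+1)}\nabla\bar U(\mathcal{Z})\,dt+\sqrt{2N^{-(p+1)}}\,d\tilde B$, the identity $\langle\mathcal{Z}-\mathcal{Z}',\nabla\bar U(\mathcal{Z})-\nabla\bar U(\mathcal{Z}')\rangle\geq\mu N|\mathcal{Z}-\mathcal{Z}'|^2$ is correct, and the resulting rate $2\mu N\cdot N^{-(p+1)}=2\mu N^{-p}$ matches the exponential factor in the statement. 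The paper performs the same computation without packaging the drift as $\nabla\bar U$.

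The one point of disagreement is the additive term. The bound you invoke for a $\kappa$-strongly log-concave measure on $\mathbb{R}^{D}$, namely $\mathbb{E}|Z-z^{\star}|^2\leq D/\kappa$ with $D=d^{\theta}+Nd^{x}$ and $\kappa=\mu N$, yields $(d^{x}N+d^{\theta})/(\mu N)$, not the $(d^{x}N+d^{\theta})/(\mu N^{p+1})$ appearing in the statement, so your claim that this step ``produces the additive term in the statement'' is not accurate as written. The paper reaches the smaller constant via the differential inequality $\tfrac{d}{dt}\mathbb{E}[|\mathcal{Z}_t^{u,N}-z^*|^2]\leq-2\mu\,\mathbb{E}[|\mathcal{Z}_t^{u,N}-z^*|^2]+2(d^{x}N+d^{\theta})/N^{p+1}$, but the drift there must carry the same time-scale $N^{-p}$ that appears (correctly) in the coupling step immediately above it; with contraction rate $2\mu/N^{p}$ the stationary value is exactly your $(d^{x}N+d^{\theta})/(\mu N)$. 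Since a time-change leaves the invariant measure untouched, this term has to coincide with the corresponding one in Proposition \hyperlink{Prop7}{4}, and under your (consistent) computation it does. In short, your argument is sound but establishes the proposition with additive term $\left((d^{x}N+d^{\theta})/(\mu N)\right)^{1/2}$; the extra factor $N^{-p}$ in the stated bound is not recoverable by this (or, it appears, any) route.
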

\begin{proof}
    The proof is postponed to Appendix \hyperref[Proof3]{A}. 
\end{proof}
\par \noindent \begin{proposition}\hypertarget{Prop7}{} Let \hyperlink{Assum1}{A1}-\hyperlink{Assum4}{A4} hold and consider the first component, namely $\vartheta^{c,N}_t$, of the continuous dynamics \eqref{eq:00}-\eqref{eq:01}. Then, for any $N\in\mathbb{N}$,
$$W_2\left(\mathcal{L}(\vartheta^{c,N}_t),\pi_{\Theta}^N\right)\leq \exp{(-\mu t)}\left(\left(\mathbb{E}\left[|z_0^N-z^*|^2\right]\right)^{1/2}+\left(\dfrac{d^x N+d^{\theta}}{\mu N}\right)^{1/2}\right).$$\end{proposition}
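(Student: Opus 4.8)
The plan is to reduce the statement to (essentially) the $p=0$ case of Proposition~\hyperlink{Prop3}{3}: substituting $p=0$ into \eqref{eq:02}--\eqref{eq:03} returns \eqref{eq:00}--\eqref{eq:01} verbatim, and the ergodicity of the \emph{continuous} dynamics involves no taming, so the argument of Appendix~\hyperref[Proof3]{A} carries over with every $N^{p}$ replaced by $1$ — which is precisely why the rate sharpens from $e^{-\mu t/N^{p}}$ to $e^{-\mu t}$ and the additive term to $(d^{x}N+d^{\theta})/(\mu N)$. Concretely, I would first pass to the rescaled process $\mathcal{Z}_t^{c,N}$ of \eqref{eq:001}. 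With $V(z):=\sum_{i=1}^N U(z^{(0)},N^{1/2}z^{(i)})$ and the chain rule, \eqref{eq:00}--\eqref{eq:01} becomes the Langevin SDE $d\mathcal{Z}_t^{c,N}=-N^{-1}\nabla V(\mathcal{Z}_t^{c,N})\,dt+\sqrt{2/N}\,dB_t$; its invariant law $\tilde\pi_{*}^{N}$ is the image of $\pi_{*}^{N}$ (Proposition~\hyperlink{Prop1}{1}) under the rescaling, and the $\theta$-marginal of $\tilde\pi_{*}^{N}$ is $\pi_{\Theta}^{N}$. Since the projection $z\mapsto z^{(0)}=\vartheta$ is $1$-Lipschitz, $W_2(\mathcal{L}(\vartheta_t^{c,N}),\pi_{\Theta}^{N})\le W_2(\mathcal{L}(\mathcal{Z}_t^{c,N}),\tilde\pi_{*}^{N})$, and it suffices to bound the latter.

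Next I would run a synchronous coupling: let $\bar{\mathcal{Z}}_t$ solve the same SDE driven by the same Brownian motion with $\bar{\mathcal{Z}}_0\sim\tilde\pi_{*}^{N}$, so that $\bar{\mathcal{Z}}_t\sim\tilde\pi_{*}^{N}$ for all $t$. The diffusion terms cancel in $d|\mathcal{Z}_t^{c,N}-\bar{\mathcal{Z}}_t|^2$, and the drift term is handled by showing that $N^{-1}\nabla V$ inherits $\mu$-strong monotonicity from \hyperlink{Assum2}{A2}. Using $\nabla_{z^{(0)}}V(z)=\sum_i h^{\theta}(z^{(0)},N^{1/2}z^{(i)})$ and $\nabla_{z^{(i)}}V(z)=N^{1/2}h^{x}(z^{(0)},N^{1/2}z^{(i)})$, one finds $\langle z-w,\nabla V(z)-\nabla V(w)\rangle=\sum_{i=1}^N\langle(z^{(0)},N^{1/2}z^{(i)})-(w^{(0)},N^{1/2}w^{(i)}),\,h(z^{(0)},N^{1/2}z^{(i)})-h(w^{(0)},N^{1/2}w^{(i)})\rangle$; applying \hyperlink{Assum2}{A2} particle-wise and noting that the $z^{(0)}$-contribution is counted $N$ times in the sum — equivalently, using the rescaling identity $|\mathcal{Z}|^2=\tfrac1N\sum_i|\mathcal{V}^i|^2$ — gives $\langle z-w,\nabla V(z)-\nabla V(w)\rangle\ge\mu N|z-w|^2$, i.e.\ $\langle z-w,N^{-1}(\nabla V(z)-\nabla V(w))\rangle\ge\mu|z-w|^2$. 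Grönwall's lemma then yields $\E[|\mathcal{Z}_t^{c,N}-\bar{\mathcal{Z}}_t|^2]\le e^{-2\mu t}\,\E[|\mathcal{Z}_0^{c,N}-\bar{\mathcal{Z}}_0|^2]$, with the uniform-in-time moment bound (Lemma~\hyperref[lemma0]{1}, available under \hyperlink{Assum1}{A1}--\hyperlink{Assum4}{A4}) and the well-posedness recorded before Proposition~\hyperlink{Prop1}{1} making all the steps rigorous.

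To finish I would take $\mathcal{Z}_0^{c,N}=z_0^{N}$, insert the reference point $z^{*}$ — the rescaling of the minimiser of the particle potential $(\theta,x^1,\dots,x^N)\mapsto\sum_{i=1}^N U(\theta,x^i)$, which by joint strong convexity equals $(\theta^{\dagger},N^{-1/2}x^{\dagger},\dots,N^{-1/2}x^{\dagger})$ with $(\theta^{\dagger},x^{\dagger})=\arg\min U$ — and use the triangle inequality in $W_2$ to obtain $W_2(\mathcal{L}(\mathcal{Z}_t^{c,N}),\tilde\pi_{*}^{N})\le e^{-\mu t}\big(\E^{1/2}[|z_0^{N}-z^{*}|^2]+W_2(\delta_{z^{*}},\tilde\pi_{*}^{N})\big)$. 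It then remains to bound $W_2(\delta_{z^{*}},\tilde\pi_{*}^{N})^2=\int|z-z^{*}|^2\,d\tilde\pi_{*}^{N}(z)$: since $\tilde\pi_{*}^{N}\propto e^{-V}$ with $V$ being $\mu N$-strongly convex in $z$ (the same particle-wise computation, read as a strong-convexity statement, or a direct bound on the Hessian of $V$), an integration by parts against $\nabla V$ — the classical second-moment estimate for strongly log-concave measures — gives $\int|z-z^{*}|^2\,d\tilde\pi_{*}^{N}\le (d^{\theta}+Nd^{x})/(\mu N)=(d^{x}N+d^{\theta})/(\mu N)$. Combining the last two displays is the claim.

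The only non-mechanical point — the main obstacle — is checking that the rescaling \eqref{eq:001} simultaneously (i) preserves $\mu$-strong monotonicity of the drift and (ii) turns $\sum_i U$ into a $\mu N$-strongly convex potential; this is where the specific choice of scaling and the identity $|\mathcal{Z}|^2=\tfrac1N\sum_i|\mathcal{V}^i|^2$ are used, and it is what makes the contraction rate come out as $\mu$, independent of $N$, while keeping the stationary-variance term at $O((d^{x}N+d^{\theta})/(\mu N))$. Everything else is Grönwall plus a triangle inequality. I would also note that Assumption~\hyperlink{Assum3a}{A3} is not actually used in this continuous-time statement — it is listed only because it is the standing hypothesis of the coordinate-wise track — and that the proposition is the particle-system ergodicity estimate of \cite{Ipla}, specialised to the un-time-scaled dynamics \eqref{eq:00}--\eqref{eq:01}.
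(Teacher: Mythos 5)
Your proposal is correct and is essentially the argument the paper relies on: the paper proves Proposition~4 by citing Proposition~4 of \cite{Ipla}, and your synchronous-coupling computation is precisely the Appendix~\hyperref[Proof3]{A} proof of Proposition~\hyperlink{Prop3}{3} specialised to $p=0$, including the key rescaling identity that turns particle-wise strong monotonicity of $h$ into $\mu$-contractivity of $N^{-1}\nabla V$ with rate independent of $N$. The only (harmless) stylistic difference is in the stationary second moment: you invoke the log-concavity bound $\int|z-z^*|^2\,d\tilde\pi_*^N\le(d^\theta+Nd^x)/(\mu N)$ directly, whereas the paper's appendix derives the same constant dynamically via a Gr\"onwall bound on $\mathbb{E}[|\mathcal{Z}_t^{\cdot,N}-z^*|^2]$ and a $\limsup$; your observation that \hyperlink{Assum3a}{A3} is not actually needed here is also accurate.
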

\begin{proof}
    Follows closely from the proof of Proposition 4 in \cite{Ipla}. One can easily verify that although the corresponding A1 in \cite{Ipla} invokes global Lipschitz continuity rather than local, its need there is solely to ensure the existence of a unique global solution, a fact already established at the start of Subsection \hyperref[statements] {3.2}.
\end{proof}
\subsection{Discretisation Error Estimates}
\noindent We consider the continuous interpolation of the \hyperref[algo1]{tIPLAu} algorithm, which approximates the time-scaled version of the continuous dynamics \eqref{eq:02}-\eqref{eq:03}, denoted by $\overline{{Z}}_t^{\lambda,u}$. Notably, the law of the discretized process and its interpolation coincide at grid-points, i.e. $\mathcal{L}({Z}_n^{\lambda,u})=\mathcal{L}(\overline{{Z}}_n^{\lambda,u})$. The continuous-time interpolation of \hyperref[algo1]{tIPLAu} is defined as follows
\begin{align}
\overline{\theta}_0^{\lambda,u}=&\ \theta_0,\ d\overline{\theta}_t^{\lambda,u}=-\dfrac{\lambda}{N^{p+1}}\sum_{i=1}^N h^{\theta}_{\lambda,u}(\theta_{\lfloor{t}\rfloor}^{\lambda,u},X^{i,\lambda,u}_{\lfloor{t}\rfloor})dt+\sqrt{\dfrac{{2\lambda}}{N^{p+1}}}dB_t^{0,\lambda},\label{eq:002}\\
\overline{X}_0^{i,\lambda,u}=&\ x^i_0,\ d\overline{X}_t^{i,\lambda,u}=-\dfrac{\lambda}{N^p}h^{x}_{\lambda,u}(\theta_{\lfloor{t}\rfloor}^{\lambda,u},X^{i,\lambda,u}_{\lfloor{t}\rfloor})dt+\sqrt{\dfrac{{2\lambda }}{N^p}}dB_t^{i,\lambda},\ \text{ for } i=1,..,N,\label{eq:003}
\end{align}
whereas the time-changed SDEs \eqref{eq:02}-\eqref{eq:03} are given by
\begin{gather}
d{\vartheta}_{\lambda t}^{u,N}=-\dfrac{\lambda}{N^{p+1}}\sum_{i=1}^N h^{\theta}({\vartheta}^{u,N}_{\lambda t},\mathcal{X}_{\lambda t}^{i,u,N})dt+\sqrt{\dfrac{2\lambda }{N^{p+1}}}dB_t^{0,\lambda},\label{eq:004}\\
d\mathcal{X}_{\lambda t}^{i,u,N}=-\dfrac{\lambda}{N^p} h^{x}({\vartheta}^{u,N}_{\lambda t},\mathcal{X}_{\lambda t}^{i,u,N})dt+\sqrt{\dfrac{2\lambda}{N^p} }dB_t^{i,\lambda},\ \text{ for } i=1,..,N,\label{eq:005}
\end{gather}
where $B_t^{j,\lambda}:=B_{\lambda t}^{j}/\sqrt{\lambda},\ t\geq 0,\ 0\leq j\leq N,$ represents a Brownian motion under its completed natural filtration $\mathcal{F}^{j,\lambda}_t:=\mathcal{F}^j_{\lambda t}.$\\ 
\par \noindent \begin{proposition}\hypertarget{Prop5}{}\label{Proposition5} Let \hyperlink{Assum1}{A1}, \hyperlink{Assum2}{A2} and \hyperlink{Assum4}{A4} hold and consider the process ${\vartheta}_{\lambda t}^{u,N}$ as given by the dynamics \eqref{eq:004}-\eqref{eq:005}. Then, for every $\lambda_0< N^{2\ell+1}/(4\mu)$, there exists a constant $C>0$ independent of $N,n,\lambda, d^x \text{and } d^{\theta}$ such that for any $\lambda\in(0,\lambda_0)$,
$$\left(\mathbb{E}\left[|\overline{\theta}^{\lambda,u}_n-\vartheta^{u,N}_{n\lambda}|^2\right]\right)^{1/2}\leq \lambda^{1/2} C_{|z_0|,\mu,b,\ell,L}{(1+d^{\theta}/N+d^x)}^{\ell+1},$$
for all $n\in\mathbb{N}$.\end{proposition}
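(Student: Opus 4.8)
The plan is to adapt the discretisation-error argument of \cite{Ipla} to the tamed drift and its superlinear growth, working throughout in the rescaled coordinates of \eqref{eq:001}. After the rescaling $x^i\mapsto N^{-1/2}x^i$, both the continuous-time interpolation $\overline{{Z}}_t^{\lambda,u}=(\overline{\theta}_t^{\lambda,u},N^{-1/2}\overline{X}_t^{1,\lambda,u},\dots,N^{-1/2}\overline{X}_t^{N,\lambda,u})$ of \eqref{eq:002}--\eqref{eq:003} and the time-changed diffusion $\mathcal{Z}^{u,N}_{\lambda t}$ of \eqref{eq:004}--\eqref{eq:005} become systems driven by the same family of Brownian motions, sharing a scalar diffusion coefficient $\sqrt{2\lambda/N^{p+1}}$ and with drifts $\mathcal{H}_{\lambda}(\cdot)$ and $\mathcal{H}(\cdot)$ (the common prefactor $\lambda/N^{p+1}$ absorbed), where $\mathcal{H}$ (resp. $\mathcal{H}_{\lambda}$) is the rescaled gradient (resp. rescaled tamed gradient) of $\sum_{i=1}^N U(\theta,x^i)$. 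The strong convexity A2 of $U$ is inherited, and after the rescaling $\mathcal{H}$ is $(\lambda\mu/N^{p})$-monotone, matching the contraction rate of Proposition \hyperlink{Prop3}{3}. Since the diffusion parts cancel, the error $e_t:=\overline{{Z}}_t^{\lambda,u}-\mathcal{Z}^{u,N}_{\lambda t}$ solves a random ODE with $e_0=0$, and because $|\overline{\theta}_n^{\lambda,u}-\vartheta^{u,N}_{n\lambda}|\le|e_n|$ (the $\theta$-component forming a sub-block of the rescaled state) it suffices to bound $\mathbb{E}[|e_n|^2]$.

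Next I would apply the chain rule to $|e_t|^2$ and split the drift increment $\mathcal{H}_{\lambda}(\overline{{Z}}_{\lfloor t\rfloor}^{\lambda,u})-\mathcal{H}(\mathcal{Z}^{u,N}_{\lambda t})$ as the sum of (i) $\mathcal{H}(\overline{{Z}}_t^{\lambda,u})-\mathcal{H}(\mathcal{Z}^{u,N}_{\lambda t})$, (ii) $\mathcal{H}_{\lambda}(\overline{{Z}}_{\lfloor t\rfloor}^{\lambda,u})-\mathcal{H}(\overline{{Z}}_{\lfloor t\rfloor}^{\lambda,u})$, and (iii) $\mathcal{H}(\overline{{Z}}_{\lfloor t\rfloor}^{\lambda,u})-\mathcal{H}(\overline{{Z}}_t^{\lambda,u})$. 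Paired with $e_t$, (i) gives a contraction $\le-\tfrac{2\lambda\mu}{N^{p}}|e_t|^2$; (ii) is the taming error, bounded by Property 2; and (iii) is the freezing error, bounded by the local Lipschitz estimate A1 together with $|\overline{{Z}}_t^{\lambda,u}-\overline{{Z}}_{\lfloor t\rfloor}^{\lambda,u}|$, which is controlled using Property 1 for the drift displacement over a sub-unit time interval and a Gaussian increment of scale $\sqrt{2\lambda/N^{p+1}}$ for the noise displacement — the variance $\tfrac{2\lambda}{N^{p+1}}(d^{\theta}+Nd^x)=\tfrac{2\lambda}{N^{p}}(d^{\theta}/N+d^x)$ of the latter being precisely where the dimensional factor $(1+d^{\theta}/N+d^x)$ enters. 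After pairing (ii) and (iii) with $e_t$ via Young's inequality, the resulting $|e_t|^2$-terms are absorbed into the contraction provided $\lambda<\lambda_0<N^{2\ell+1}/(4\mu)=N^{p}/(4\mu)$ (which is also the range in which the moment bounds below hold); taking expectations and running a Grönwall/comparison argument against the contraction then gives a bound \emph{uniform in $n$} — it is the strong convexity A2 that removes the otherwise unavoidable exponential-in-$n$ growth.

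The other main ingredient is a family of uniform-in-$n$, dimension-explicit polynomial moment bounds of the form $\sup_{n}\mathbb{E}\big[|\overline{{Z}}_n^{\lambda,u}|^{2p_0}\big]\le C(1+d^{\theta}/N+d^x)^{p_0}$ with $p_0=2(\ell+1)$ (and its analogue for the diffusion), which is why A4 asks for a finite moment of order $2p_0$ of $z_0^N$; the factor $d^{\theta}/N$ here reflects the concentration of the $\theta$-marginal, i.e. the role of $N$ as inverse temperature. The exponent $2p_0=4(\ell+1)$ is forced by (ii), since Property 2 contributes $|v|^{2(\ell+1)}$, appearing squared inside $\mathbb{E}[|e_t|^2]$, and by (iii), where $(1+|v|^{\ell})^2$ multiplies the squared one-step increment. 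Splitting these expectations with H\"older's inequality, collecting the $N$-powers (the contraction rate $\lambda\mu/N^{p}$ cancelling the $1/N^{p}$ carried by (ii) and (iii), so that no spurious power of $N$ survives) and taking the square root — which turns the $(1+d^{\theta}/N+d^x)^{2(\ell+1)}$ produced by the taming term into $(1+d^{\theta}/N+d^x)^{\ell+1}$ — then yields the claim, each of the three contributions carrying at least one factor $\lambda^{1/2}$.

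I expect the main obstacle to be the dimensional bookkeeping: every use of Young's or H\"older's inequality and every invocation of the moment bounds must be arranged so that the final constant depends only on $|z_0|,\mu,b,\ell,L$ and the dimension enters exclusively through $(1+d^{\theta}/N+d^x)^{\ell+1}$. This requires proving the prerequisite moment bounds with explicit $N$- and $d$-dependence of the correct homogeneity, and carefully tracking how the $N$-powers originating from the coordinate rescaling, the $1/N^{p}$ time-change, and the taming normalisation combine so as to cancel against the contraction rate.
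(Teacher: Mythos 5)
Your proposal follows essentially the same route as the paper's proof: the same rescaling \eqref{rescale31}--\eqref{rescale21} with cancelling diffusions, the same three-way splitting of the drift increment into a contraction term (handled by \hyperlink{Assum2}{A2}), a taming term (handled by Property \hyperlink{Property2}{2}), and a freezing term (handled by \hyperlink{Assum1}{A1} plus the fourth-moment one-step increment bound), each absorbed into the contraction via Young's inequality under $\lambda<N^{p}/(4\mu)$ and closed with a Gr\"onwall argument; the moment bounds of order $4(\ell+1)$ and the cancellation of the $N^{-p}$ factors against the contraction rate are likewise exactly the mechanism used in Lemmas \ref{lemma1}--\ref{lemma2} and equation \eqref{c3}. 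The plan is correct and matches the paper's argument in all essential respects.
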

\begin{proof}
    The proof is postponed to Appendix \hyperref[Proof5]{A}.
\end{proof}
\noindent Similarly, we define the corresponding auxiliary processes for the iterates of \hyperref[algo2]{tIPLAc}. The continuous-time interpolation is given by
\begin{align}
\overline{\theta}_0^{\lambda,c}&=\theta_0,\ d\overline{\theta}_t^{\lambda,c}=-\dfrac{\lambda}{N}\sum_{i=1}^N h^{\theta}_{\lambda,c}(\theta_{\lfloor{t}\rfloor}^{\lambda,c},X^{i,\lambda,c}_{\lfloor{t}\rfloor})dt+\sqrt{\dfrac{{2\lambda}}{N}}dB_t^{0,\lambda},\label{eq:006}\\
\overline{X}_0^{i,\lambda,c}&=x^i_0,\ d\overline{X}_t^{i,\lambda,c}=-\lambda h^{x}_{\lambda,c}(\theta_{\lfloor{t}\rfloor}^{\lambda,c},X^{i,\lambda,c}_{\lfloor{t}\rfloor})dt+\sqrt{2\lambda }dB_t^{i,\lambda},\ \text{ for } i=1,\ldots,N,\label{eq:007}
\end{align}
and the time changed SDEs of \eqref{eq:00}-\eqref{eq:01} are
\begin{gather}
d{\vartheta}_{\lambda t}^{c,N}=-\dfrac{\lambda}{N}\sum_{i=1}^N h^{\theta}({\vartheta}^{c,N}_{\lambda t},\mathcal{X}_{\lambda t}^{i,c,N})dt+\sqrt{\dfrac{2\lambda }{N}}dB_t^{0,\lambda},\label{eq:008}\\
d\mathcal{X}_{\lambda t}^{i,c,N}=-{\lambda} h^{x}({\vartheta}^{c,N}_{\lambda t},\mathcal{X}_{\lambda t}^{i,c,N})dt+\sqrt{2\lambda}dB_t^{i,\lambda},\ \text{ for } i=1,\ldots,N,\label{eq:009}
\end{gather}where the Brownian motions are defined as in \eqref{eq:004}-\eqref{eq:005}.\\ 
\par \noindent \begin{proposition}\hypertarget{Prop6}{}\label{Proposition6} Let \hyperlink{Assum1}{A1}, \hyperlink{Assum2}{A2}, \hyperlink{Assum3a}{A3-i} and \hyperlink{Assum4}{A4} hold and consider the process ${\vartheta}_{\lambda t}^{c,N}$ as given by the dynamics \eqref{eq:008}-\eqref{eq:009}. Then, for every $\lambda_0<1/(4\mu)$, there exists a constant $C>0$ independent of $N,n,\lambda, d^x \text{and } d^{\theta}$ such that for any $\lambda\in(0,\lambda_0)$,
$$\left(\mathbb{E}\left[|\overline{\theta}^{\lambda,c}_n-\vartheta^{c,N}_{n\lambda}|^2\right]\right)^{1/2}\leq \lambda^{1/2} C_{|z_0|,\mu,b,\ell,L}{(1+d^{\theta}+d^x)}^{\ell+1},$$
for all $n\in\mathbb{N}$. If assumption \hyperlink{Assum3b}{A3-ii} is used in place of \hyperlink{Assum3a}{A3-i}, the proposition holds under the updated timestep constraint, $\lambda_0<1/(8\mu)$.\end{proposition}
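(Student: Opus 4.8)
The proof parallels that of Proposition \hyperlink{Prop5}{5}, with three structural changes: no auxiliary time-rescaling by $N^p$ is needed, because the diffusion coefficients of the interpolated scheme \eqref{eq:006}--\eqref{eq:007} and of the time-changed diffusion \eqref{eq:008}--\eqref{eq:009} already coincide; the coordinate-wise estimate in Property 2 replaces the uniform one; and the uniform moment bounds for the \hyperref[algo2]{tIPLAc} iterates now rest on \hyperlink{Assum3a}{A3-i} (or \hyperlink{Assum3b}{A3-ii}), since Property 3 guarantees dissipativity of $h_{\lambda,c}$ only under that extra smoothness. I work throughout in the rescaled coordinates $\mathcal{Z}$ of \eqref{eq:001}: writing $\widetilde{h}$ and $\widetilde{h}_{\lambda,c}$ for the drifts of the rescaled diffusion and rescaled scheme, \hyperlink{Assum2}{A2} shows strong monotonicity is preserved, $\langle \mathcal{Z}-\mathcal{Z}',\widetilde{h}(\mathcal{Z})-\widetilde{h}(\mathcal{Z}')\rangle\ge\mu|\mathcal{Z}-\mathcal{Z}'|^2$ (the per-particle $N^{-1/2}$ factors cancel upon averaging over particles), while \hyperlink{Assum1}{A1} and Properties 1--2 transfer to $\widetilde{h},\widetilde{h}_{\lambda,c}$ through $|\widetilde{h}(\mathcal{Z})-\widetilde{h}(\mathcal{Z}')|^2\le\frac1N\sum_{i=1}^N|h(v_i)-h(v_i')|^2$ (Jensen on the $\theta$-block), $v_i,v_i'$ denoting the associated particle pairs. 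Since $|\overline{\theta}^{\lambda,c}_n-\vartheta^{c,N}_{n\lambda}|^2\le|\overline{\mathcal{Z}}^{\lambda,c}_n-\overline{\mathcal{Z}}^{c,N}_{n\lambda}|^2$, it suffices to bound the full rescaled error.

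Couple the rescaled interpolated scheme $\overline{\mathcal{Z}}^{\lambda,c}_t$ and the rescaled time-changed diffusion $\overline{\mathcal{Z}}^{c,N}_{\lambda t}$ through the common Brownian motion $B^{\cdot,\lambda}$ and the shared initial datum $z_0^N$. Then $e_t:=\overline{\mathcal{Z}}^{\lambda,c}_t-\overline{\mathcal{Z}}^{c,N}_{\lambda t}$ has no martingale part and $e_0=0$, so $\frac{d}{dt}\mathbb{E}|e_t|^2=-2\lambda\,\mathbb{E}\langle e_t,\widetilde{h}_{\lambda,c}(\overline{\mathcal{Z}}^{\lambda,c}_{\lfloor t\rfloor})-\widetilde{h}(\overline{\mathcal{Z}}^{c,N}_{\lambda t})\rangle$. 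Split the bracket as $T_1+T_2+T_3$ with a taming term $T_1=\widetilde{h}_{\lambda,c}(\overline{\mathcal{Z}}^{\lambda,c}_{\lfloor t\rfloor})-\widetilde{h}(\overline{\mathcal{Z}}^{\lambda,c}_{\lfloor t\rfloor})$, a time-regularity term $T_2=\widetilde{h}(\overline{\mathcal{Z}}^{\lambda,c}_{\lfloor t\rfloor})-\widetilde{h}(\overline{\mathcal{Z}}^{\lambda,c}_t)$, and a contractive term $T_3=\widetilde{h}(\overline{\mathcal{Z}}^{\lambda,c}_t)-\widetilde{h}(\overline{\mathcal{Z}}^{c,N}_{\lambda t})$. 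Strong monotonicity gives $\langle e_t,T_3\rangle\ge\mu|e_t|^2$; Young's inequality on the $T_1,T_2$ contributions then yields $\frac{d}{dt}\mathbb{E}|e_t|^2\le-\lambda\mu\,\mathbb{E}|e_t|^2+\frac{2\lambda}{\mu}(\mathbb{E}|T_1|^2+\mathbb{E}|T_2|^2)$, and Gr\"onwall with $e_0=0$ gives $\mathbb{E}|e_n|^2\le\frac{2}{\mu^2}\sup_{s\ge0}(\mathbb{E}|T_1(s)|^2+\mathbb{E}|T_2(s)|^2)$. The crucial point is that the contraction rate $\lambda\mu$ here is dictated by the $\theta$-marginal convexity constant, which is why the classical $\lambda^{1/2}$ Euler--Maruyama rate survives, and the constant stays free of $N$, despite the nominal dimension $d^\theta+Nd^x$.

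It remains to bound $T_1,T_2$ uniformly in the grid index, which is where the moment estimates enter. For $T_1$, Property 2 gives $\mathbb{E}|T_1|^2\lesssim\lambda(1+\sup_n\mathbb{E}|\overline{\mathcal{Z}}^{\lambda,c}_n|^{2p_0})$ with $p_0=2(\ell+1)$ (using exchangeability to pass between the per-particle average and the rescaled norm). For $T_2$, \hyperlink{Assum1}{A1} together with the one-step bound $\mathbb{E}|\overline{\mathcal{Z}}^{\lambda,c}_t-\overline{\mathcal{Z}}^{\lambda,c}_{\lfloor t\rfloor}|^4\lesssim\lambda^2(1+\mathbb{E}|\overline{\mathcal{Z}}^{\lambda,c}_{\lfloor t\rfloor}|^4)$ — obtained by controlling the one-step drift via the linear growth in Property 1 and the Gaussian increments — gives, through Cauchy--Schwarz, $\mathbb{E}|T_2|^2\lesssim\lambda(1+\sup_n\mathbb{E}|\overline{\mathcal{Z}}^{\lambda,c}_n|^{4\ell})$ up to dimension factors. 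Both are closed by the uniform moment bound for the \hyperref[algo2]{tIPLAc} iterates and for the continuous process (the analogue of Lemma \hyperref[lemma0]{1}), valid for $\lambda<1/(4\mu)$ under \hyperlink{Assum3a}{A3-i}, which propagates $\mathbb{E}|z_0^N|^{2p_0}<\infty$ from \hyperlink{Assum4}{A4} to $\sup_n\mathbb{E}|\overline{\mathcal{Z}}^{\lambda,c}_n|^{2p_0}\le C(1+d^\theta+d^x)^{2(\ell+1)}$. Hence $\mathbb{E}|e_n|^2\lesssim\lambda(1+d^\theta+d^x)^{2(\ell+1)}$, and taking square roots gives the claim. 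Under \hyperlink{Assum3b}{A3-ii} the only change is that the cross terms $\rho|x_i|^2,\rho|\theta_i|^2$ in the coordinate-wise dissipativity estimate — subject to $4\rho<\mu$ — eat into the contraction margin of the moment recursion, which is why the admissible range shrinks to $\lambda<1/(8\mu)$; the remainder is verbatim.

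I expect the genuine difficulty to lie in the uniform-in-$n$ moment bound for the coordinate-wise tamed chain, not in the error recursion above. Unlike the uniform scheme, $h_{\lambda,c}$ need not be dissipative on its own, so one must carry the coordinate-wise dissipativity of \hyperlink{Assum3a}{A3} through the square expansion of a single iteration, absorb the $\lambda^2|h_{\lambda,c}|^2$ contribution using the linear growth of Property 1, and verify that the resulting one-step recursion contracts precisely for $\lambda<1/(4\mu)$ (respectively $1/(8\mu)$ under \hyperlink{Assum3b}{A3-ii}) — all while keeping the dependence on $d^\theta$, $d^x$ and $N$ explicit, so that the final estimate carries the clean $(1+d^\theta+d^x)^{\ell+1}$ factor. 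The monotonicity transfer under rescaling and the Gr\"onwall/Cauchy--Schwarz bookkeeping are then routine.
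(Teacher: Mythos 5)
Your proposal is correct and follows essentially the same route as the paper: the same synchronous coupling with vanishing martingale part, the same three-way split of the drift error into a contractive term (handled by \hyperlink{Assum2}{A2}), a taming term (handled by Property \hyperlink{Property2}{2}), and a discretisation term (handled by \hyperlink{Assum1}{A1}, Cauchy--Schwarz, and the increment bound), closed by Young's inequality and Gr\"onwall, with the stepsize restrictions $\lambda<1/(4\mu)$ and $\lambda<1/(8\mu)$ entering exactly where you say they do, namely through the uniform-in-$n$ moment bounds for the pairs $V_n^{i,\lambda,c}$ under \hyperlink{Assum3a}{A3-i} or \hyperlink{Assum3b}{A3-ii}. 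Your closing paragraph correctly identifies the per-pair moment lemmas as the real content that the paper supplies separately (Lemmas \ref{lemma4} and \ref{lemma6}).
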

\begin{proof}
    The proof is postponed to Appendix \hyperref[Proof6]{A}.
\end{proof}
\subsection{Main results and global error}
\begin{theorem} \label{theorem:1}Consider the iterates $({\theta}^{\lambda,u}_{n})_{n\geq 0}$ as given in \hyperref[algo1]{tIPLAu}, and let \hyperlink{Assum1}{A1}, \hyperlink{Assum2}{A2} and \hyperlink{Assum4}{A4} hold. Then, for every $\lambda_0<N^{2\ell+1}/(4\mu)$, there exists a constant $C>0$ independent of $N,n,\lambda,d^x \text{and } d^{\theta}$ such that, for any $\lambda\in(0,\lambda_0)$,
\begin{align*}
\left(\mathbb{E}\left[|{\theta}^{*}-{\theta}^{\lambda,u}_{n}|^2\right]\right)^{1/2}&\leq \sqrt{\dfrac{d^{\theta}}{\mu N}}+\lambda^{1/2} C_{|z_0|,\mu,b,\ell,L}(1+d^{\theta}/N+d^x)^{\ell+1}\\&+\exp{(-\mu n\lambda/N^{2\ell+1})}\left(\left(\mathbb{E}\left[|z_0^N-z^*|^2\right]\right)^{1/2}+\left(\dfrac{d^x N+d^{\theta}}{\mu N^{2(\ell+1)}}\right)^{1/2}\right),
\end{align*}
for all $n\in\mathbb{N}$.\end{theorem}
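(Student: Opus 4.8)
The plan is to derive the stated bound directly from the three-term decomposition anticipated in Subsection~\ref{essential}, feeding in the preliminary results already assembled. Since $\delta_{\theta^*}$ is a Dirac mass, one has $\mathbb{E}^{1/2}[|\theta^*-\theta^{\lambda,u}_n|^2]=W_2(\delta_{\theta^*},\mathcal{L}(\theta^{\lambda,u}_n))$; inserting the $\theta$-marginal $\pi_\Theta^N$ of the invariant measure and the law of the (original-clock) continuous dynamics \eqref{eq:02}--\eqref{eq:03} at time $n\lambda$, the triangle inequality for $W_2$ gives
\begin{align*}
\mathbb{E}^{1/2}\!\left[|\theta^*-\theta^{\lambda,u}_n|^2\right]\leq W_2\!\left(\delta_{\theta^*},\pi_\Theta^N\right)+W_2\!\left(\pi_\Theta^N,\mathcal{L}(\vartheta^{u,N}_{n\lambda})\right)+W_2\!\left(\mathcal{L}(\vartheta^{u,N}_{n\lambda}),\mathcal{L}(\theta^{\lambda,u}_n)\right).
\end{align*}
All three laws have finite second moment --- for the middle one this is the uniform moment bound already used in the proof of Proposition~\hyperlink{Prop1}{1} --- so the splitting is legitimate and each $W_2$ term is finite.

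I would then bound the three terms in order. The first is exactly Proposition~\hyperlink{Prop2}{2}, contributing $\sqrt{d^\theta/(\mu N)}$. For the second, I note that the time-changed process $\vartheta^{u,N}_{\lambda t}$ of \eqref{eq:004}--\eqref{eq:005} is merely a deterministic reparametrisation of $\vartheta^{u,N}_t$, so $\mathcal{L}(\vartheta^{u,N}_{n\lambda})$ is the law of \eqref{eq:02}--\eqref{eq:03} at original time $t=n\lambda$; applying Proposition~\hyperlink{Prop3}{3} at this time, with $p=2\ell+1$ and hence $p+1=2(\ell+1)$, yields precisely the exponential term of the statement. For the third, I use that the continuous interpolation $\overline{\theta}^{\lambda,u}_t$ of \hyperref[algo1]{tIPLAu} agrees in law with the iterates at grid points, $\mathcal{L}(\theta^{\lambda,u}_n)=\mathcal{L}(\overline{\theta}^{\lambda,u}_n)$, and that $\overline{\theta}^{\lambda,u}_t$ and $\vartheta^{u,N}_{\lambda t}$ are driven by the same Brownian motions from the same initial law; this synchronous coupling is admissible in the definition of $W_2$, so $W_2(\mathcal{L}(\vartheta^{u,N}_{n\lambda}),\mathcal{L}(\theta^{\lambda,u}_n))\leq(\mathbb{E}[|\overline{\theta}^{\lambda,u}_n-\vartheta^{u,N}_{n\lambda}|^2])^{1/2}$, which Proposition~\hyperlink{Prop5}{5} bounds by $\lambda^{1/2}C_{|z_0|,\mu,b,\ell,L}(1+d^\theta/N+d^x)^{\ell+1}$. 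Summing the three estimates produces the claimed inequality.

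The only delicate point is keeping the two time scales and the $N$-powers straight. Proposition~\hyperlink{Prop5}{5} is valid for $\lambda_0<N^{2\ell+1}/(4\mu)$, which is exactly the range imposed in the theorem, while Proposition~\hyperlink{Prop3}{3} holds for all $t\geq0$, so no further restriction on $\lambda$ enters; one must simply verify that the time argument passed to Proposition~\hyperlink{Prop3}{3} is $n\lambda$ --- the clock of the continuous dynamics --- and not the step count $n$, which is consistent with the interpolation being constructed so that step $n$ sits at time $n$ in the rescaled clock, i.e.\ time $n\lambda$ in the original one. All the genuinely technical work --- the uniform moment bounds, the contraction estimate, and the strong discretisation error --- has already been carried out in the cited propositions and their appendix proofs, so I do not anticipate a real obstacle here beyond this bookkeeping; the analogous statement for \hyperref[algo2]{tIPLAc} would follow the identical template using Propositions~\hyperlink{Prop7}{4} and~\hyperlink{Prop6}{6} in place of Propositions~\hyperlink{Prop3}{3} and~\hyperlink{Prop5}{5}.
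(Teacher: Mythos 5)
Your proposal is correct and follows essentially the same route as the paper: the identity $\mathbb{E}^{1/2}[|\theta^*-\theta^{\lambda,u}_n|^2]=W_2(\delta_{\theta^*},\mathcal{L}(\theta^{\lambda,u}_n))$, the triangle-inequality split through $\pi_\Theta^N$ and $\mathcal{L}(\vartheta^{u,N}_{n\lambda})$, and the substitution of the bounds from Propositions 2, 3 and 5. The additional bookkeeping you supply on the time change and the synchronous coupling is exactly what the paper leaves implicit, so there is nothing to correct.
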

\begin{proof}
    Combining the results from Propositions \hyperlink{Prop2}{2}, \hyperlink{Prop3}{3} and \hyperlink{Prop5}{5}, we decompose the expectation into three terms: one describing the concentration of $\pi_{\Theta}^N$ around $\theta^*$, one describing the convergence of the IPS to its invariant measure, and lastly one describing the error induced by the time discretisation
\begin{align}
\left(\mathbb{E}\left[|{\theta}^{*}-{\theta}^{\lambda,u}_{n}|^2\right]\right)^{1/2}&=W_2(\delta_{\theta^*},\mathcal{L}(\theta^{\lambda,u}_{n}))\nonumber\\
&\leq W_2(\delta_{\theta^*},\pi_{\Theta}^N)+W_2(\pi_{\Theta}^N,\mathcal{L}(\vartheta^{u,N}_{n\lambda}))+W_2(\mathcal{L}(\vartheta^{u,N}_{n\lambda}),\mathcal{L}(\theta_{n}^{\lambda,u})).\label{eq:31}
\end{align}
Substituting the bounds from the aforementioned Propositions yields the final inequality.
\end{proof}
\noindent \begin{theorem}\label{theorem:2} Consider the iterates $({\theta}^{\lambda,c}_{n})_{n\geq 0}$ as given in \hyperref[algo2]{tIPLAc}, and let \hyperlink{Assum1}{A1}, \hyperlink{Assum2}{A2}, \hyperlink{Assum3a}{A3-i} and \hyperlink{Assum4}{A4} hold. Then, for every $\lambda_0<1/(4\mu)$, there exists a constant $C>0$ independent of $N,n,\lambda,d^x \text{and } d^{\theta}$ such that, for any $\lambda\in(0,\lambda_0)$,
\begin{align*}
\left(\mathbb{E}\left[|{\theta}^{*}-{\theta}^{\lambda,c}_{n}|^2\right]\right)^{1/2}&\leq \sqrt{\dfrac{d^{\theta}}{\mu N}}+\exp{(-\mu n\lambda)}\left(\left(\mathbb{E}\left[|z_0^N-z^*|^2\right]\right)^{1/2}+\left(\dfrac{d^x N+d^{\theta}}{\mu N}\right)^{1/2}\right)\\
&+\lambda^{1/2} C_{|z_0|,\mu,b,\ell,L}(1+d^{\theta}+d^x)^{\ell+1},
\end{align*}
for all $n\in\mathbb{N}$. If assumption \hyperlink{Assum3b}{A3-ii} is used in place of \hyperlink{Assum3a}{A3-i}, the theorem holds under the updated timestep constraint, $\lambda_0<1/(8\mu)$.\end{theorem}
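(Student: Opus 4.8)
The plan is to mirror the proof of Theorem \ref{theorem:1}, substituting the ingredients tailored to \hyperref[algo1]{tIPLAu} with their \hyperref[algo2]{tIPLAc} counterparts. Since $\delta_{\theta^*}$ is a Dirac mass, the optimal coupling with $\mathcal{L}(\theta_n^{\lambda,c})$ is trivial and $\big(\mathbb{E}[|\theta^*-\theta_n^{\lambda,c}|^2]\big)^{1/2}=W_2(\delta_{\theta^*},\mathcal{L}(\theta_n^{\lambda,c}))$; this presupposes $\mathcal{L}(\theta_n^{\lambda,c})\in\mathcal{P}_2(\mathbb{R}^{d^\theta})$, which is precisely where Assumption \hyperlink{Assum3a}{A3-i} is genuinely needed, namely to close the uniform-in-$n$ moment bounds for the coordinate-wise scheme (recall that Property 3 for $h_{\lambda,c}$ itself rests on \hyperlink{Assum3a}{A3-i}). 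One then inserts the intermediate laws $\pi_{\Theta}^N$ and $\mathcal{L}(\vartheta^{c,N}_{n\lambda})$ and applies the triangle inequality for $W_2$ on $\mathcal{P}_2$,
$$W_2(\delta_{\theta^*},\mathcal{L}(\theta_n^{\lambda,c}))\leq W_2(\delta_{\theta^*},\pi_{\Theta}^N)+W_2(\pi_{\Theta}^N,\mathcal{L}(\vartheta^{c,N}_{n\lambda}))+W_2(\mathcal{L}(\vartheta^{c,N}_{n\lambda}),\mathcal{L}(\theta_n^{\lambda,c})).$$

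Next I would bound the three terms using results already established. The first is at most $\sqrt{d^\theta/(\mu N)}$ by Proposition \hyperlink{Prop2}{2}, which uses only \hyperlink{Assum2}{A2}. The second, the ergodicity gap for the untamed particle system, is controlled by Proposition \hyperlink{Prop7}{4} evaluated at $t=n\lambda$, yielding $\exp(-\mu n\lambda)\big((\mathbb{E}[|z_0^N-z^*|^2])^{1/2}+((d^xN+d^\theta)/(\mu N))^{1/2}\big)$; here one must keep the time-change bookkeeping straight, since step $n$ of the rescaled SDEs \eqref{eq:008}--\eqref{eq:009} corresponds to the original time $n\lambda$ of \eqref{eq:00}--\eqref{eq:01}. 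The third is the discretisation error: because the continuous interpolation \eqref{eq:006}--\eqref{eq:007} satisfies $\mathcal{L}(\overline{\theta}^{\lambda,c}_n)=\mathcal{L}(\theta_n^{\lambda,c})$ at grid points, Proposition \hyperlink{Prop6}{6} gives $W_2(\mathcal{L}(\vartheta^{c,N}_{n\lambda}),\mathcal{L}(\theta_n^{\lambda,c}))\leq\lambda^{1/2}C_{|z_0|,\mu,b,\ell,L}(1+d^\theta+d^x)^{\ell+1}$ for $\lambda\in(0,\lambda_0)$ with $\lambda_0<1/(4\mu)$. Summing the three bounds produces the stated inequality, and the timestep restriction $\lambda_0<1/(4\mu)$ --- sharpened to $\lambda_0<1/(8\mu)$ when the weaker coordinate-wise dissipativity \hyperlink{Assum3b}{A3-ii} replaces \hyperlink{Assum3a}{A3-i} --- is inherited verbatim from Proposition \hyperlink{Prop6}{6}.

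At the level of this theorem the argument is a routine assembly; the substantive work sits in Proposition \hyperlink{Prop6}{6} (proved in the appendix), which combines the uniform moment bounds for both the tIPLAc iterates and the continuous dynamics with the closeness estimate Property 2 for $h_{\lambda,c}$ and a synchronous-coupling/Gr\"onwall argument, all while certifying that every constant is independent of $N,n,\lambda,d^x$ and $d^\theta$. The only point demanding a little care at the present stage is the consistency of the dimension dependence: Proposition \hyperlink{Prop6}{6} yields the factor $(1+d^\theta+d^x)^{\ell+1}$ rather than the $(1+d^\theta/N+d^x)^{\ell+1}$ of Theorem \ref{theorem:1}, reflecting the absence of the $N^{-p}$ time-rescaling in the coordinate-wise dynamics; inserting this together with the $N^{-1}$-scaled ergodicity term of Proposition \hyperlink{Prop7}{4} reproduces exactly the three summands in the statement.
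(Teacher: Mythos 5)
Your proposal is correct and follows exactly the paper's argument: the same triangle-inequality decomposition of $W_2(\delta_{\theta^*},\mathcal{L}(\theta_n^{\lambda,c}))$ into three terms, bounded respectively by Propositions \hyperlink{Prop2}{2}, \hyperlink{Prop7}{4} and \hyperlink{Prop6}{6}, with the timestep constraint inherited from Proposition \hyperlink{Prop6}{6}. Your additional remarks on where \hyperlink{Assum3a}{A3-i} enters and on the dimension-dependence bookkeeping are accurate but not part of the paper's (very terse) proof.
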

\begin{proof}
To prove the above inequality, we replace the bound for $W_2(\pi_{\Theta}^N,\mathcal{L}(\vartheta^{\cdot,N}_{n\lambda}))$ in \eqref{eq:31} given by Proposition  \hyperlink{Prop3}{3} with the bound given by Proposition \hyperlink{Prop7}{4} and respectively the bound for $W_2(\mathcal{L}(\vartheta^{\cdot,N}_{n\lambda}),\mathcal{L}(\theta_{n}^{\lambda,\cdot}))$ given by Proposition  \hyperlink{Prop5}{5} with the bound given by Proposition \hyperlink{Prop6}{6}.
\end{proof}
\noindent It is observed that the dependence on the dimension $d^{\theta}$ of the parameter space in Theorem \hyperref[theorem:2]{2} exhibits a slight deterioration compared to that presented in Theorem \hyperref[theorem:1]{1} and in Theorem 1 of \cite{Ipla}. Notably, the factor $1/N$ is absent in the third term of Theorem \hyperref[theorem:2]{2}. This is interpreted as a compromise entailed by the use of the coordinate-wise taming function \eqref{eq:06}, which requires the direct derivation of moment bounds for the pairs $\mathcal{V}_t^{i,\cdot,N}$, as discussed in Subsection \hyperref[essential]{2.3}. Consequently, this approach results in the loss of the symmetrical structure that is attained by considering the $N$-particle system $\mathcal{X}_t^{i,\cdot,N}$ in its entirety.\\
\noindent \begin{corollary}
Consider the iterates of \hyperref[algo1]{tIPLAu}, $({\theta}^{\lambda,u}_{n})_{n\geq 0}$ and let $\epsilon>0$. Then, for $N\geq9d^{\theta}/(\mu\epsilon^2)$ and $\lambda\leq\min\{\lambda_{0},\epsilon^2/(9C_0(1+\mu\epsilon^2/9+d^x)^{2\ell+2})\}$, one needs $n\geq (9/\mu)^{2\ell+2}((d^{\theta})^{2\ell+1}/\epsilon^{4\ell+4})C_0^2(1+\mu\epsilon^2/9+d^x)^{2\ell+2}\log(3(R_0+(d^x\epsilon^{4\ell+2}\mu^{2\ell}/(9d^{\theta})^{2\ell+1}+9\epsilon^{4\ell+4}\mu^{2\ell+1}/(9d^{\theta})^{2\ell+1})^{1/2})/\epsilon)$ iterations to achieve 
        \[W_2(\pi_\beta,\mathcal{L}({\theta}^{\lambda,u}_{n}))\leq \epsilon, \]
where $R_0=\left(\mathbb{E}\left[|z_0^N-z^*|^2\right]\right)^{1/2}$. Overall one has to choose $N=\mathcal{O}(1/\epsilon^{2})$, $\lambda=\mathcal{O}(\epsilon^2)$ and $n=\mathcal{O}(\epsilon^{-4(\ell+1)}\log(1/\epsilon))$. The corresponding dimensional dependencies are as follows, $N=\mathcal{O}(d^{\theta})$, $\lambda=\mathcal{O}((d^x)^{-2(\ell+1)})$ and $n=\mathcal{O}((d^{\theta}d^x)^{2\ell+1}\log((d^x)^{1/2}/(d^{\theta})^{\ell+1/2}))$.
\end{corollary}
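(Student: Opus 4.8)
The plan is to take the three-term bound from Theorem~\ref{theorem:1} and, for a target accuracy $\epsilon>0$, allocate a budget of $\epsilon/3$ to each of the three terms, then solve for the required values of $N$, $\lambda$ and $n$. Concretely, write the bound as
\begin{align*}
\mathbb{E}^{1/2}\left[|\theta^*-\theta^{\lambda,u}_n|^2\right]&\leq \sqrt{\frac{d^\theta}{\mu N}}+\lambda^{1/2}C_0(1+d^\theta/N+d^x)^{\ell+1}\\&\quad+\exp(-\mu n\lambda/N^{2\ell+1})\left(R_0+\left(\frac{d^x N+d^\theta}{\mu N^{2(\ell+1)}}\right)^{1/2}\right),
\end{align*}
with $C_0:=C_{|z_0|,\mu,b,\ell,L}$ and $R_0$ as in the statement. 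I would set each piece $\leq\epsilon/3$.

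First, $\sqrt{d^\theta/(\mu N)}\leq\epsilon/3$ forces $N\geq 9d^\theta/(\mu\epsilon^2)$, which gives $N=\mathcal{O}(1/\epsilon^2)$ and $N=\mathcal{O}(d^\theta)$. Second, with $N$ at (or near) this threshold we have $d^\theta/N\leq \mu\epsilon^2/9$, so it suffices to require $\lambda^{1/2}C_0(1+\mu\epsilon^2/9+d^x)^{\ell+1}\leq\epsilon/3$, i.e. $\lambda\leq\epsilon^2/(9C_0^2(1+\mu\epsilon^2/9+d^x)^{2\ell+2})$; combined with the a priori restriction $\lambda<\lambda_0$ this yields the stated bound on $\lambda$, and thus $\lambda=\mathcal{O}(\epsilon^2)$ and $\lambda=\mathcal{O}((d^x)^{-2(\ell+1)})$. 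Third, for the exponential term I would bound the bracket by $R_0+(\text{the explicit moment expression evaluated at }N=9d^\theta/(\mu\epsilon^2))$, call the resulting constant $\tilde R_0$; requiring $\exp(-\mu n\lambda/N^{2\ell+1})\,\tilde R_0\leq \epsilon/3$ and solving for $n$ gives
\[
n\geq \frac{N^{2\ell+1}}{\mu\lambda}\log\!\left(\frac{3\tilde R_0}{\epsilon}\right).
\]
Substituting the worst-case (largest) admissible $N$ and the smallest admissible $\lambda$ into $N^{2\ell+1}/(\mu\lambda)$ produces the factor $(9/\mu)^{2\ell+2}\big((d^\theta)^{2\ell+1}/\epsilon^{4\ell+4}\big)C_0^2(1+\mu\epsilon^2/9+d^x)^{2\ell+2}$, and the log term becomes the quantity displayed in the statement once $\tilde R_0$ is written out explicitly (the $d^x\epsilon^{4\ell+2}\mu^{2\ell}/(9d^\theta)^{2\ell+1}$ and $\epsilon^{4\ell+4}\mu^{2\ell+1}/(9d^\theta)^{2\ell+1}$ terms are exactly $(d^xN+d^\theta)/(\mu N^{2(\ell+1)})$ split into its two summands at $N=9d^\theta/(\mu\epsilon^2)$). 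Reading off orders gives $n=\mathcal{O}(\epsilon^{-4(\ell+1)}\log(1/\epsilon))$ and $n=\mathcal{O}((d^\theta d^x)^{2\ell+1}\log((d^x)^{1/2}/(d^\theta)^{\ell+1/2}))$.

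The only genuinely delicate point is bookkeeping: one must be careful that plugging $N=9d^\theta/(\mu\epsilon^2)$ into the second term is legitimate (it is, since the map $N\mapsto d^\theta/N$ is decreasing, so any larger $N$ only helps) and that the monotonicity in $\lambda$ and $n$ is used in the right direction when passing from ``there exist admissible parameters'' to ``for all admissible parameters below the threshold.'' Everything else is substitution and simplification; no new analytic estimate is needed beyond Theorem~\ref{theorem:1}. I would also note that $\pi_\beta$ in the displayed conclusion should be read as $\delta_{\theta^*}$ (the notation is inherited from the annealing analogy), so that $W_2(\delta_{\theta^*},\mathcal{L}(\theta^{\lambda,u}_n))=\mathbb{E}^{1/2}[|\theta^*-\theta^{\lambda,u}_n|^2]$ and the corollary is an immediate quantitative reading of the theorem.
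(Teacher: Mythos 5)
Your proposal is correct and is exactly the (implicit) argument behind the corollary, which the paper states without a written proof: split the three-term bound of Theorem~\ref{theorem:1} into $\epsilon/3$ budgets, solve each for $N$, $\lambda$ and $n$ in turn, and substitute the threshold $N=9d^\theta/(\mu\epsilon^2)$ into the remaining terms. Your bookkeeping is in fact slightly more careful than the printed statement (squaring $\lambda^{1/2}C_0(\cdot)^{\ell+1}\leq\epsilon/3$ gives $C_0^2$ in the denominator of the $\lambda$-condition, and the second summand inside the logarithm evaluates to $\mu^{2\ell+1}\epsilon^{4\ell+4}/\bigl(9(9d^\theta)^{2\ell+1}\bigr)$ rather than with the factor $9$ in the numerator), but these are harmless constant-level discrepancies in the corollary as printed, not gaps in your argument.
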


$\phantom0$

\noindent \begin{corollary}
Consider the iterates of \hyperref[algo2]{tIPLAc}, $({\theta}^{\lambda,c}_{n})_{n\geq 0}$ and let $\epsilon>0$. Then, for $N\geq9d^{\theta}/(\mu\epsilon^2)$ and $\lambda\leq\min\{\lambda_{0},\epsilon^2/(9C_0(1+d^{\theta}+d^x)^{2\ell+2})\}$, one needs $n\geq (9/(\mu\epsilon^2))C_0^2(1+d^{\theta}+d^x)^{2\ell+2}\log(3(R_0+(d^x/\mu+\epsilon^2/9)^{1/2})/\epsilon)$ iterations to achieve 
        \[W_2(\pi_\beta,\mathcal{L}({\theta}^{\lambda,c}_{n}))\leq \epsilon, \]
where $R_0=\left(\mathbb{E}\left[|z_0^N-z^*|^2\right]\right)^{1/2}$. Overall one has to choose $N=\mathcal{O}(1/\epsilon^{2})$, $\lambda=\mathcal{O}(\epsilon^2)$ and $n=\mathcal{O}(\epsilon^{-2}\log(1/\epsilon))$.The corresponding dimensional dependencies are as follows, $N=\mathcal{O}(d^{\theta})$, $\lambda=\mathcal{O}((d^x+d^{\theta})^{-2(\ell+1)})$ and $n=\mathcal{O}((d^x+d^{\theta})^{2\ell+2}\log(d^x))$.
\end{corollary}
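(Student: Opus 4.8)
The plan is to obtain this directly from Theorem \ref{theorem:2} by the standard device of splitting the target accuracy $\epsilon$ into three equal parts and then choosing $N$, $\lambda$ and $n$, in that order, so that each of the three terms in the bound of Theorem \ref{theorem:2} is at most $\epsilon/3$. Recall that, writing $\pi_\beta$ for the Dirac mass $\delta_{\theta^*}$ at the maximiser and setting $C_0:=C_{|z_0|,\mu,b,\ell,L}^2$, Theorem \ref{theorem:2} asserts (under \hyperlink{Assum1}{A1}, \hyperlink{Assum2}{A2}, \hyperlink{Assum3a}{A3-i}, \hyperlink{Assum4}{A4} and $\lambda<\lambda_0<1/(4\mu)$) that
\begin{equation*}
W_2(\delta_{\theta^*},\mathcal{L}(\theta^{\lambda,c}_n))\leq \sqrt{\tfrac{d^{\theta}}{\mu N}}+\exp(-\mu n\lambda)\Big(R_0+\big(\tfrac{d^xN+d^{\theta}}{\mu N}\big)^{1/2}\Big)+\sqrt{\lambda}\,\sqrt{C_0}\,(1+d^{\theta}+d^x)^{\ell+1}.
\end{equation*}

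First I would handle the concentration term $\sqrt{d^\theta/(\mu N)}$: it is $\leq\epsilon/3$ exactly when $N\geq 9d^\theta/(\mu\epsilon^2)$, which is the stated requirement and yields $N=\mathcal{O}(1/\epsilon^2)$ and $N=\mathcal{O}(d^\theta)$. I would also record that this choice forces $d^\theta/(\mu N)\leq\epsilon^2/9$, so that $\big((d^xN+d^\theta)/(\mu N)\big)^{1/2}=\big(d^x/\mu+d^\theta/(\mu N)\big)^{1/2}\leq(d^x/\mu+\epsilon^2/9)^{1/2}$, the quantity that will appear inside the logarithm. Next I would treat the discretisation term: $\sqrt{\lambda}\,\sqrt{C_0}\,(1+d^\theta+d^x)^{\ell+1}\leq\epsilon/3$ precisely when $\lambda\leq\epsilon^2/\big(9C_0(1+d^\theta+d^x)^{2\ell+2}\big)$, and intersecting this with the structural restriction $\lambda<\lambda_0$ produces exactly the prescription $\lambda\leq\min\{\lambda_0,\epsilon^2/(9C_0(1+d^\theta+d^x)^{2\ell+2})\}$. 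Since $\mu$, $\ell$ and $C_0$ are fixed this is $\lambda=\mathcal{O}(\epsilon^2)$, and because $(1+d^\theta+d^x)^{2\ell+2}\asymp(d^\theta+d^x)^{2(\ell+1)}$ it is also $\lambda=\mathcal{O}((d^\theta+d^x)^{-2(\ell+1)})$.

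Finally, with $N$ and $\lambda$ now fixed, the ergodicity term is (by the first step) at most $\exp(-\mu n\lambda)\big(R_0+(d^x/\mu+\epsilon^2/9)^{1/2}\big)$, and this is $\leq\epsilon/3$ iff $n\geq\frac{1}{\mu\lambda}\log\!\big(3(R_0+(d^x/\mu+\epsilon^2/9)^{1/2})/\epsilon\big)$. Substituting the largest admissible $\lambda$ from the previous step turns $1/(\mu\lambda)$ into a fixed multiple of $(1+d^\theta+d^x)^{2\ell+2}/(\mu\epsilon^2)$, which reproduces the explicit iteration count displayed in the corollary (the exact power of the constant $C_0$ dragged along is immaterial for the $\mathcal{O}$-rates). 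Adding the three $\epsilon/3$ contributions gives $W_2(\delta_{\theta^*},\mathcal{L}(\theta^{\lambda,c}_n))\leq\epsilon$. For the asymptotic read-off I would observe that the logarithmic factor is $\mathcal{O}(\log(1/\epsilon))$ as $\epsilon\to0$ and $\mathcal{O}(\log d^x)$ in the latent dimension (the argument of the logarithm being at most polynomial in $d^x$), so that $n=\mathcal{O}(\epsilon^{-2}\log(1/\epsilon))$ and $n=\mathcal{O}((d^\theta+d^x)^{2\ell+2}\log d^x)$.

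The hard part is essentially nonexistent: every condition is a one-line inequality solved in closed form, and the proof is pure bookkeeping on top of Theorem \ref{theorem:2}. The only points that require a little attention are (i) keeping the intersection with $\lambda_0$ so that Theorem \ref{theorem:2} genuinely applies, (ii) carrying the constant $C_0$ consistently between the prescription for $\lambda$ and the resulting iteration count, and (iii) justifying the harmless simplifications $(1+d^\theta+d^x)^{m}\asymp(d^\theta+d^x)^{m}$ and $\log(\mathrm{poly}(d^x))=\mathcal{O}(\log d^x)$ that are used only to pass to the final $\mathcal{O}(\cdot)$ forms. If \hyperlink{Assum3b}{A3-ii} is invoked in place of \hyperlink{Assum3a}{A3-i}, I would run the identical argument with $\lambda_0<1/(8\mu)$, nothing else changing.
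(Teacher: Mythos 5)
Your proposal is correct and is exactly the intended derivation: the paper gives no written proof for this corollary, but the factors of $9$, the $\epsilon/3$ split, and the $\log(3(\cdot)/\epsilon)$ in the statement make clear that the three-term decomposition of Theorem \ref{theorem:2} with each term bounded by $\epsilon/3$ is the argument being invoked. You also correctly flag the only blemish, namely that the direct computation yields a factor $C_0$ rather than the $C_0^2$ printed in the iteration count (and that the stated $n$ presupposes $\lambda$ is taken at its maximal admissible value), neither of which affects the $\mathcal{O}(\cdot)$ conclusions.
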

\subsection{Summary of contributions and discussion}\label{summary}
This article aims to expand the interacting particle-based gradient decent methods for marginal likelihood optimization beyond linear growth for the corresponding gradients. The contributions of our work can be summarized as follows:
\begin{itemize}
  \item We provide rigorous results for the treatment of interacting particle systems with superlinear drift coefficients.
  \item For stepsize $\lambda$, we achieve $\lambda^{1/2}$ rate in $W_2$ distance for both tIPLA-based algorithms to the target measure. To the best of our knowledge, this is the first result under such relaxed assumptions.
  \item We provide explicit dimensional dependencies in our constants, exhibiting polynomial scaling determined by the polynomial growth of the drift coefficients.
\end{itemize}
\begin{table*}[h]
\caption{Comparison of algorithmic complexity across existing literature}
\label{table}
\centering
\begin{tabular}{lccc}
\toprule
    &$W_2$
    &\textsc{Convexity}
	&\textsc{Growth}
    \\
\midrule
    PGD \cite{Kuntz} 
	&$\mathcal{O}\left(\epsilon^{-2}\log(1/\epsilon)\right)$
    &strongly convex
	&linear
	\\
	\\
    IPLA \cite{Ipla}
	&$\mathcal{O}\left(\epsilon^{-2}\log(1/\epsilon)\right)$
    &strongly convex
	&linear
	\\
	\\
    tIPLAu
    &$\mathcal{O}\left(\epsilon^{-4(\ell+1)}\log(1/\epsilon)\right)$
    &strongly convex
	&polynomial
	\\
	\\
    tIPlAc
	&$\mathcal{O}\left(\epsilon^{-2}\log(1/\epsilon)\right)$
    &strongly convex
	&polynomial
	\\
	\\
\bottomrule
\end{tabular}
\end{table*}
\noindent In Table \ref{table} we compare the performance of our algorithms with other IPS gradient decent variants that do not accommodate superlinear settings. We can see that our result for \hyperref[algo1]{tIPLAu} does not achieve the same complexity as its competitors. This is a natural drawback since we are not limited to the globally Lipschitz continuous case for gradients. However, \hyperref[algo2]{tIPLAc} compares favorably, attaining the same complexity as its competitors by allowing superlinear growth while imposing element-wise dissipativity. We highlight that, SOUL \cite{Soul} is omitted from Table \ref{table} due to different methodologies being employed: it operates sequentially with a single particle for latent posterior sampling, whereas our approach uses $N$ particles and is parallelizable; additionally, SOUL \cite{Soul} uses a decreasing stepsize approach to characterize almost sure convergence while in this paper a constant stepsize is used to obtain $\mathcal{L}^2$ convergence estimates. Furthermore, we do not include classical MCMC methods in our numerical comparison, as our focus is on gradient-based approaches.\setlength{\parindent}{15pt}
\par\indent The tIPLA algorithm class generalizes IPLA \cite{Ipla} by allowing a wider class of potentials with superlinear gradients to be considered. One observes that elements of the original proof structure appear in this work, while new techniques address challenges unique to the new setting. In particular, the main challenge in the superlinear context is designing a taming function that satisfies critical properties, specifically, linear growth (Property \hyperlink{Property1}{1}) and inheriting the dissipativity of the original gradient (Property \hyperlink{Property3}{3}). Both properties are essential for establishing higher moment bounds, which is key to our proof strategy.\setlength{\parindent}{15pt}
\par\indent
 The steps, to demonstrate both the concentration of the $\theta$-marginal of the invariant measure around the maximizer and the contraction of the IPS, parallel the corresponding proofs for IPLA. The primary differences lie in the technical modifications required to overcome the challenges introduced by aforementioned taming functions. To control the discretization error, we introduce the time-scaling factor $N^p$ in equations \eqref{eq:02}–\eqref{eq:03}. In the uniform taming case, this factor enables us to bound the sum of the higher moments of the paired processes $V_t^{i,\lambda,u}$ by the corresponding rescaled process $Z_t^{\lambda,u}$ (see equation \eqref{c3}). In contrast, for coordinate-wise taming, such a modification is unnecessary because higher moment bounds for $V_t^{i,\lambda,c}$ can be derived directly due to the smoothness Assumption \hyperlink{Assum3a}{3}.
\section{Numerical Experiments}
This section presents the performance evaluation of \hyperref[algo1]{tIPLAu} and \hyperref[algo2]{tIPLAc} by applying them on a set optimization problems. We also include competitive algorithms such as IPLA \cite{Ipla}, PGD \cite{Kuntz}, and SOUL \cite{Soul} for comparison. The results highlight the ability of our methods to operate under more relaxed assumptions in the superlinear setting, where other algorithms face limitations.
\subsection{Bayesian logistic regression with thin tailed priors}
Let $y\in\mathbb{R}^{d^y}$ represent the observable variable, which consists of a set of binary responses $\{y_j\}_{j=1}^{d^y}\in\{0,1\}$, and $x\in\mathbb{R}^{d^x}$, $\theta\in\mathbb{R}^{d^{\theta}}$ the latent variable and the target parameter to be estimated respectively with $d^x=d^{\theta}$. We consider the Bayesian logistic regression model where the conditional distribution of $y$ given $x$ is defined as $$p_{\theta}(y|x)=\prod_{j=1}^{d^y}s(u_j^Tx)^{y_j}(1-s(u_j^Tx))^{1-y_j},$$ where $s(t)=\exp{(t)}/(1+\exp{(t)})$ and $\{u_j\}^{d^y}_{j=1}\in\mathbb{R}^{d^x}$ are fixed covariates corresponding to the observable data. Instead of employing a standard Gaussian prior, we model the prior distribution using a generalized Gaussian with thinner tails, expressed as \begin{align}p_{\theta}(x)=C_1\exp\left(-\dfrac{1}{\sigma^4}\sum_{j=1}^{d^x}|x_j-\theta_j|^4-\dfrac{1}{\sigma^2}\sum_{j=1}^{d^x}|x_j-\theta_j|^2)\right).\label{gengauss}\end{align} Thus, the marginal likelihood can be written as \begin{align*}
   p_\theta(y)&=\int_{\mathbb{R}^{d^x}}p_{\theta}(y|x)p_{\theta}(x)dx=\int_{\mathbb{R}^{d^x}}\exp{(-U_y(\theta,x))}dx,
    \end{align*}
where\begin{align}
    U_y(\theta,x)&=C_2+\dfrac{1}{\sigma^4}\sum_{j=1}^{d^x}|x_j-\theta_j|^4+\dfrac{1}{\sigma^2}\sum_{j=1}^{d^x}|x_j-\theta_j|^2\nonumber-\sum_{j=1}^{d^y}\left(y_j\ln(s(u_j^Tx))+(1-y_j)\ln(s(-u_j^Tx))\right).\label{regU}
\end{align}
Here the constant $C_2$ consists of terms not dependent on either $x$ or $\theta$. Assuming that $\sigma^2$ is a fixed, known constant, we verify that the potential $ U_y(\theta,x)$ satisfies our main assumptions \hyperlink{Assum1}{A1} and \hyperlink{Assum2}{A2}, in view of Remark \ref{remark3} and Remark \ref{remark4}.\newline \par \noindent \textbf{Experiment details}. Following a similar setup with \cite{Ipla}, we choose $d^x = d^{\theta} = 3$ and $d^y = 900$ and set $\theta^* = [2, 5, -1]$ to test the convergence of the parameter. We run \hyperref[algo1]{tIPLAu} for $M = 200\text{k}$ steps, using $N = 10, 100, 1000$ particles and a stepsize of $\lambda = 0.0001$. We set $\sigma^2 = 0.1$ and simulate the covariates $v_j$ from the uniform distribution $U(-1,1)$ over $\mathbb{R}^{d^x}$ . The parameter ${\theta}_0$ is initialized from a deterministic value $[100, -100, 0]$, while the particles $X^{i,N}_0$ are drawn from the generalized Gaussian \eqref{gengauss} with mean $\theta_0$ and variance $\sigma^2=0.1$.\\
\indent As shown in Fig. \ref{fig1}, we observe that as $N$ increases, the convergence time to the true value decreases, with a pronounced effect when $N$ exceeds 1000. However, as seen in Fig.\ref{fig2}, which displays the last 10,000 iterations of each simulation, there is no significant impact on the residual bias of the algorithm once convergence has been achieved. This suggests that the source of the bias in this case comes from the discretization error, which remains largely the same with respect to $N$ and is primarily dependent on the problem-specific constants and the chosen stepsize. Moreover, it is evident that as $N$ increases, the Markov chains generated by the algorithm become less noisy, as one would expect from Langevin based algorithms when the inverse temperature parameter is increased. 
\begin{figure}[ht]
    \centering
    \begin{minipage}{0.33\textwidth}
        \centering
        \includegraphics[width=\textwidth]{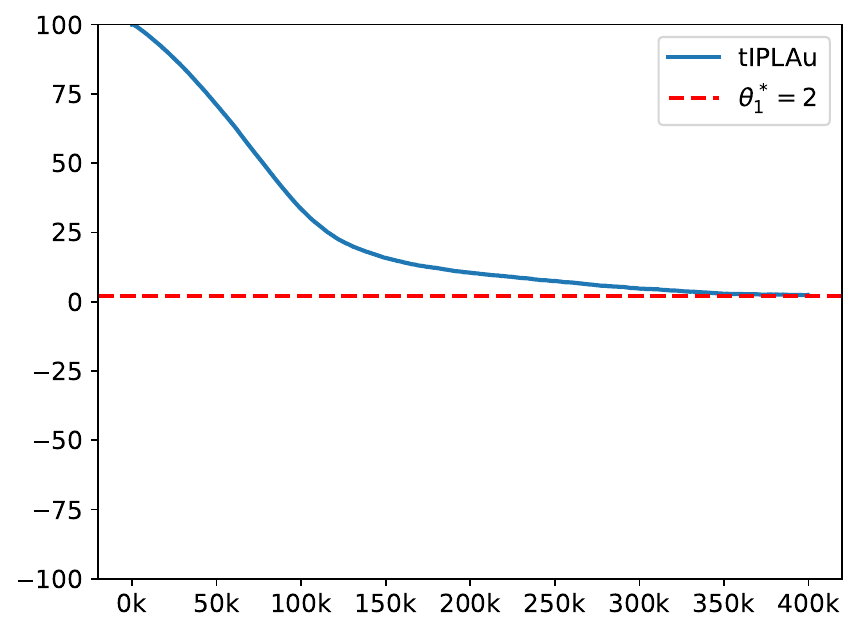}
    \end{minipage}\hfill
    \begin{minipage}{0.33\textwidth}
        \centering
        \includegraphics[width=\textwidth]{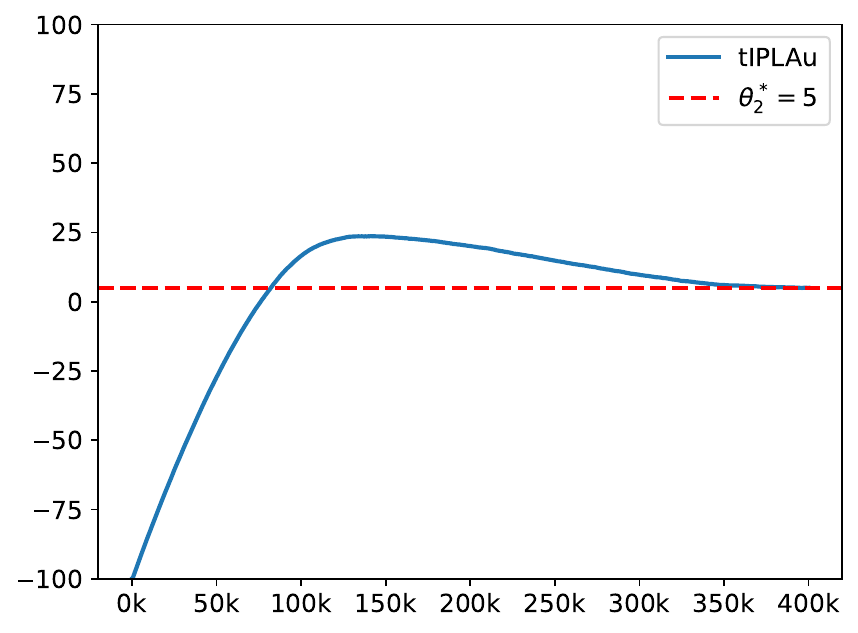}
    \end{minipage}\hfill
    \begin{minipage}{0.33\textwidth}
        \centering
        \includegraphics[width=\textwidth]{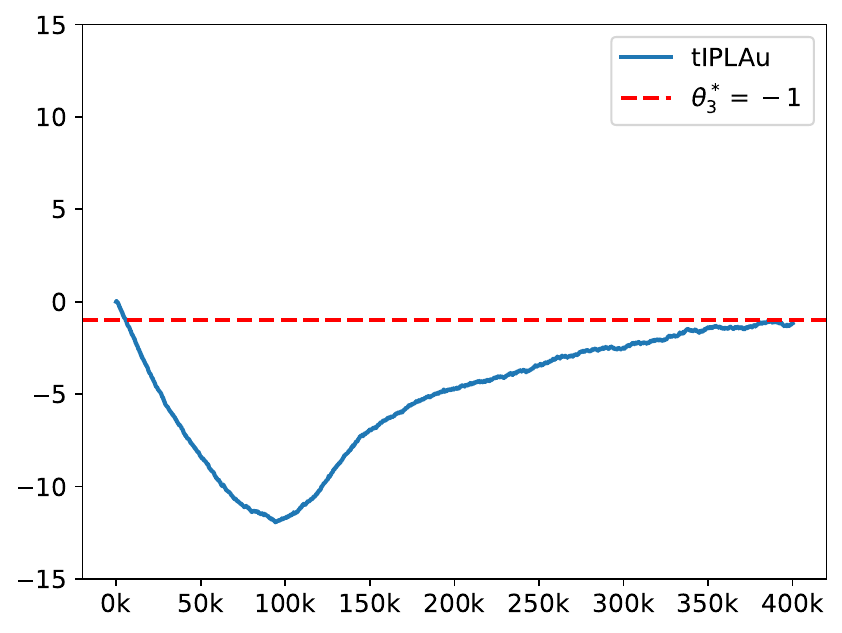}
    \end{minipage}
    \vfill
    \begin{minipage}{0.33\textwidth}
        \centering
        \includegraphics[width=\textwidth]{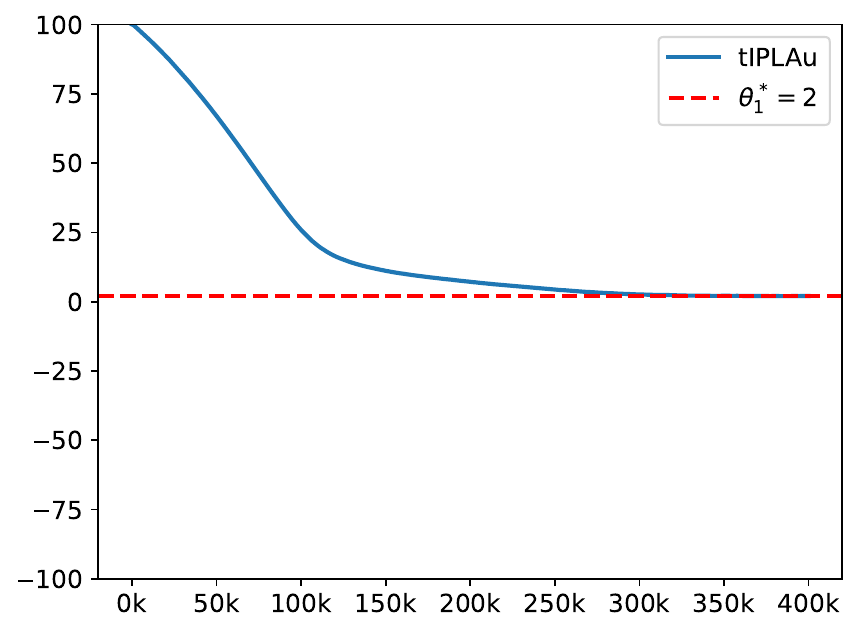}
    \end{minipage}\hfill
    \begin{minipage}{0.33\textwidth}
        \centering
        \includegraphics[width=\textwidth]{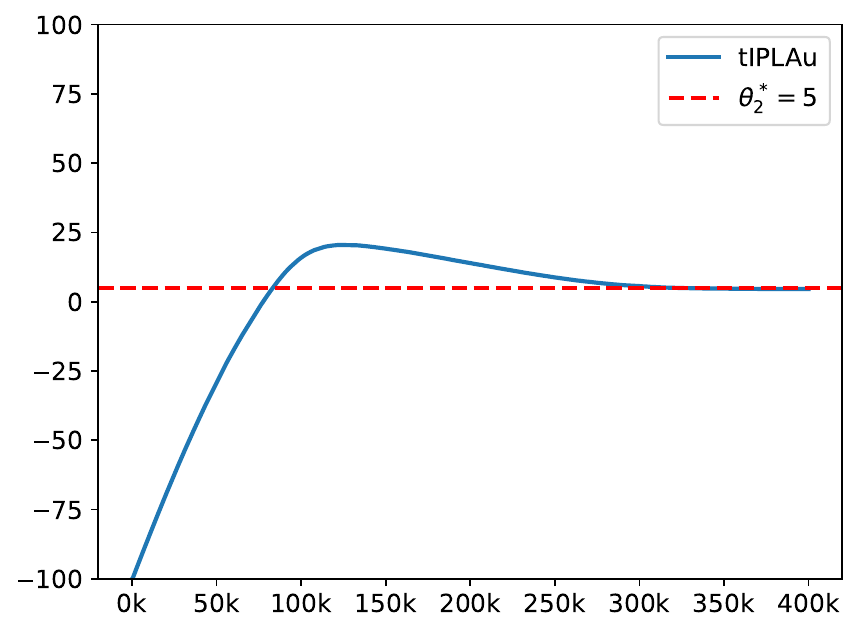}
    \end{minipage}\hfill
    \begin{minipage}{0.33\textwidth}
        \centering
        \includegraphics[width=\textwidth]{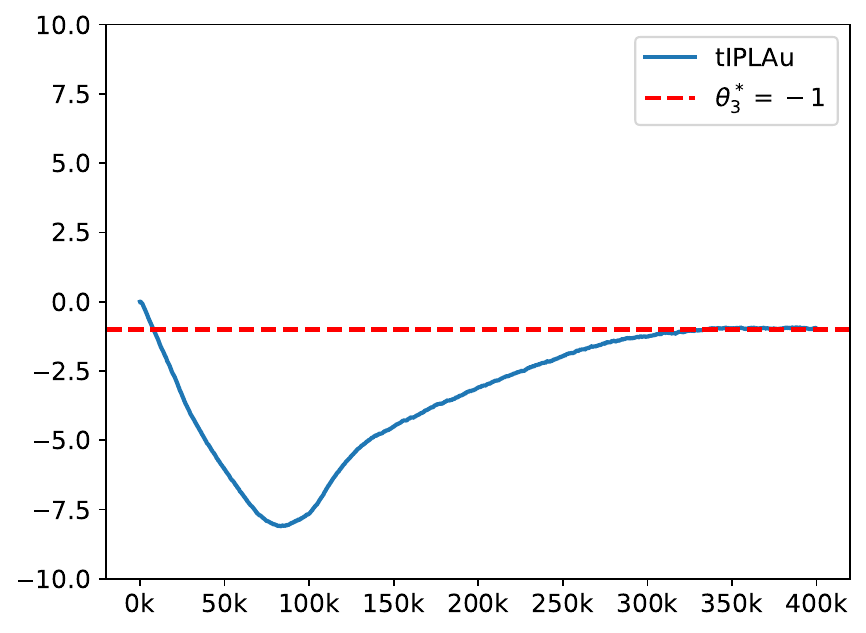}
    \end{minipage}
    \vfill
    \begin{minipage}{0.33\textwidth}
        \centering
        \includegraphics[width=\textwidth]{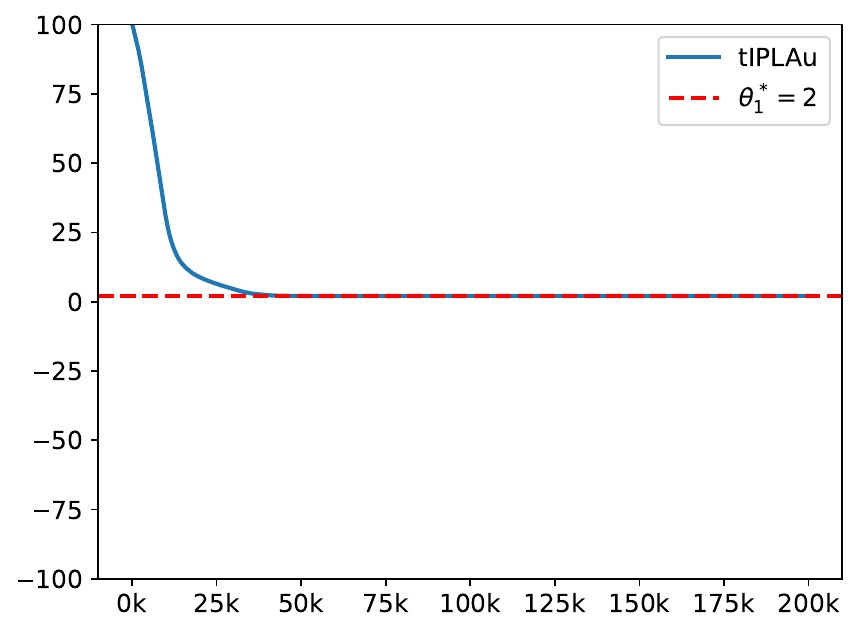}
    \end{minipage}\hfill
    \begin{minipage}{0.33\textwidth}
        \centering
        \includegraphics[width=\textwidth]{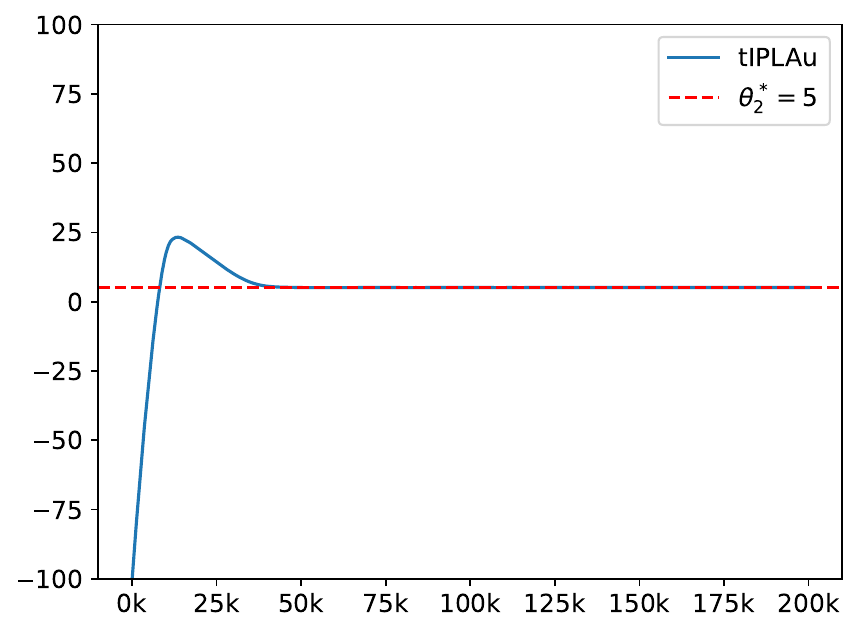}
    \end{minipage}\hfill
    \begin{minipage}{0.33\textwidth}
        \centering
        \includegraphics[width=\textwidth]{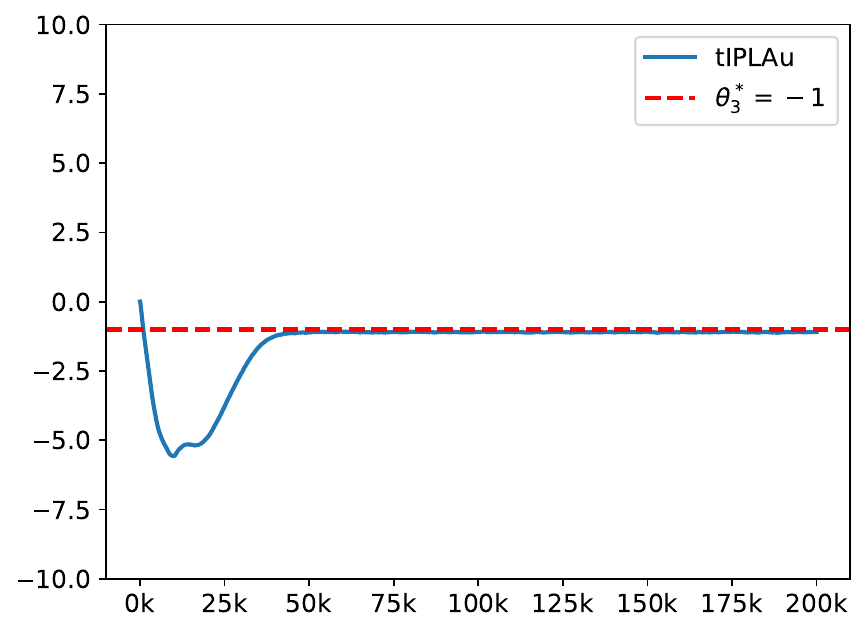}
    \end{minipage}
    \caption{The performance of \hyperref[algo1]{tIPLAu} on the synthetic logistic regression problem. Each column corresponds to one of the components of the true parameter $\theta^*$ = [2, 5, -1]. The top row corresponds to $N=10$, the middle row to $N=100$, and the bottom row to $N=1000$}
    \label{fig1}
\end{figure}
\begin{figure}[ht]
    \centering
    \begin{minipage}{0.32\textwidth}
        \centering
        \includegraphics[width=\textwidth]{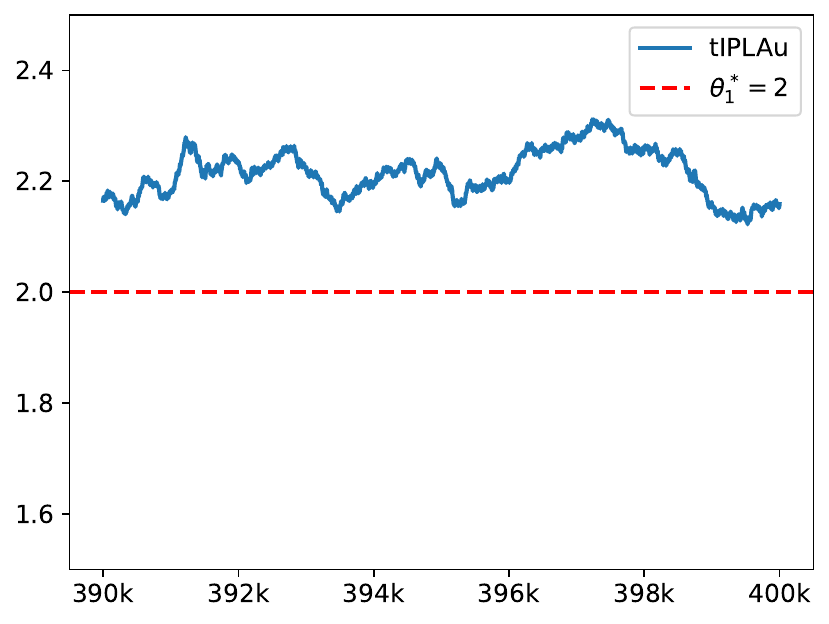}
    \end{minipage}\hfill
    \begin{minipage}{0.32\textwidth}
        \centering
        \includegraphics[width=\textwidth]{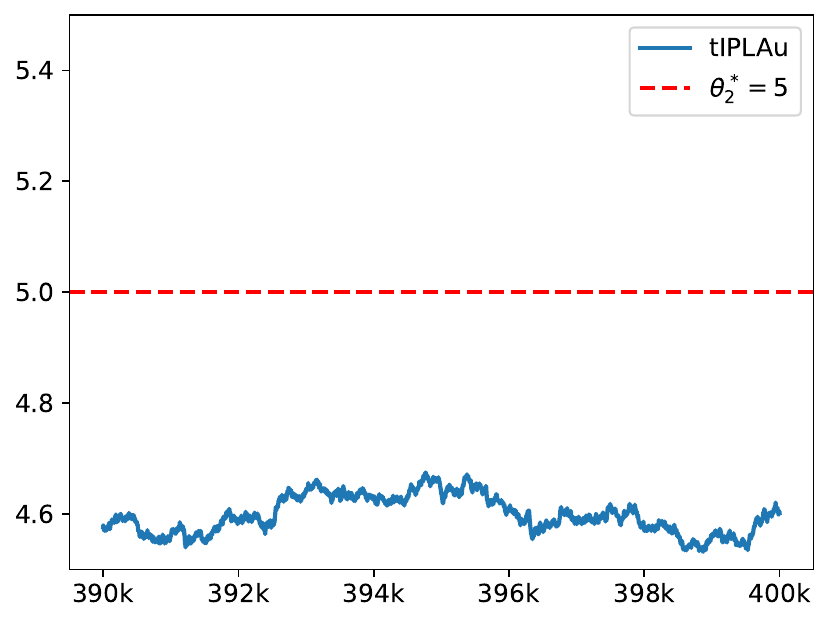}
    \end{minipage}\hfill
    \begin{minipage}{0.32\textwidth}
        \centering
        \includegraphics[width=\textwidth]{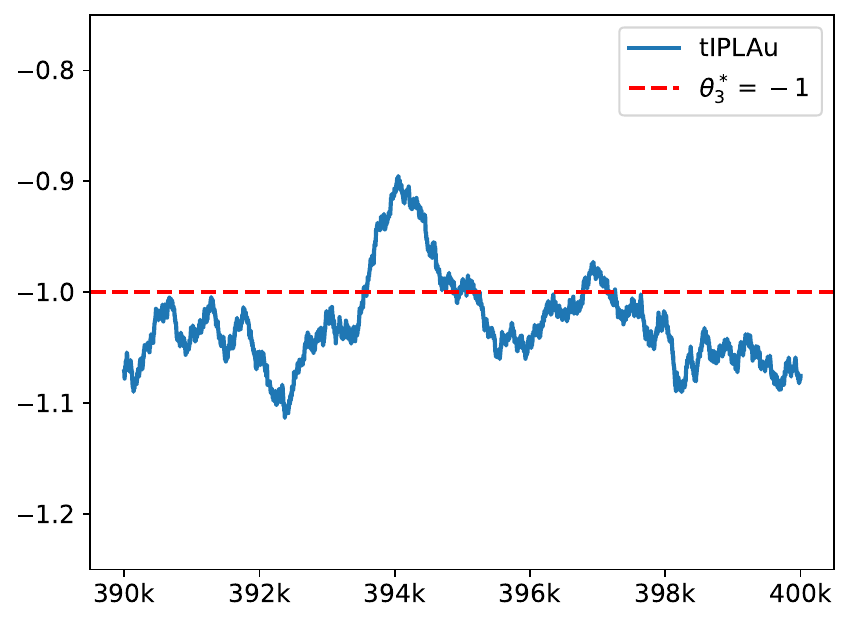}
    \end{minipage}
    \vfill
    \begin{minipage}{0.33\textwidth}
        \centering
        \includegraphics[width=\textwidth]{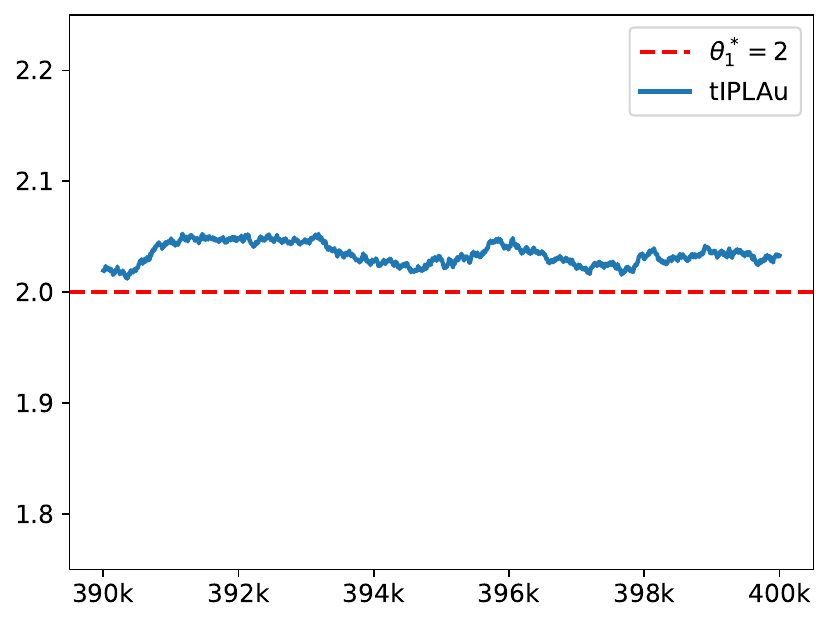}
    \end{minipage}\hfill
    \begin{minipage}{0.33\textwidth}
        \centering
        \includegraphics[width=\textwidth]{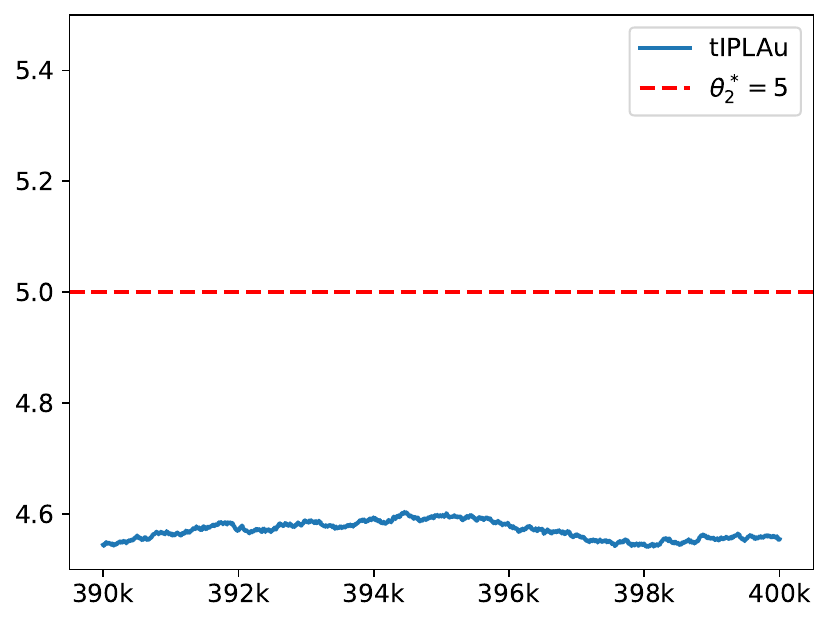}
    \end{minipage}\hfill
    \begin{minipage}{0.33\textwidth}
        \centering
        \includegraphics[width=\textwidth]{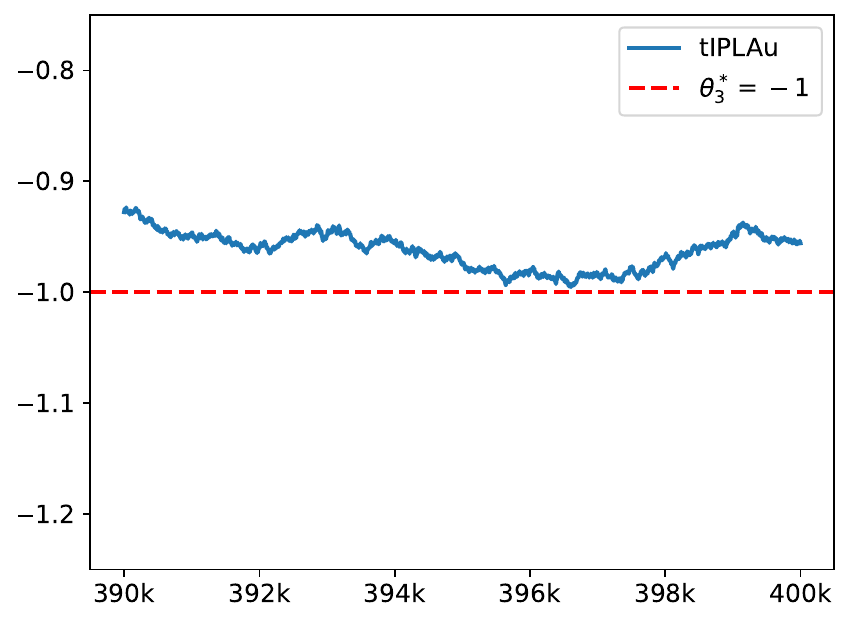}
    \end{minipage}
    \vfill
    \begin{minipage}{0.33\textwidth}
        \centering
        \includegraphics[width=\textwidth]{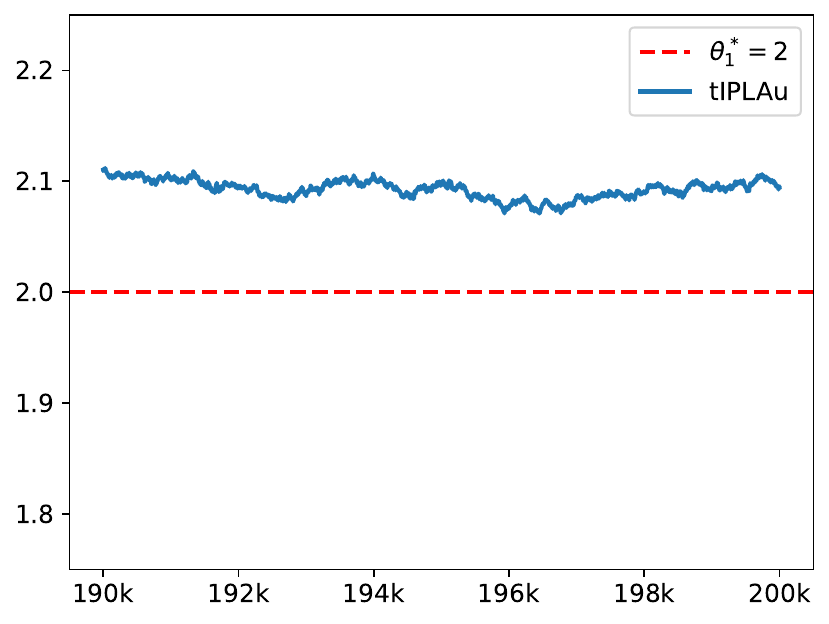}
    \end{minipage}\hfill
    \begin{minipage}{0.33\textwidth}
        \centering
        \includegraphics[width=\textwidth]{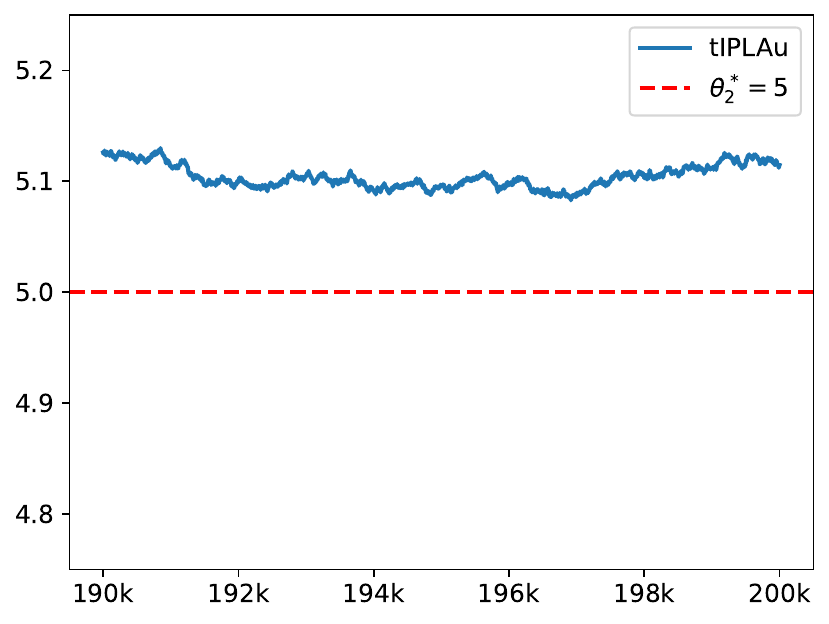}
    \end{minipage}\hfill
    \begin{minipage}{0.33\textwidth}
        \centering
        \includegraphics[width=\textwidth]{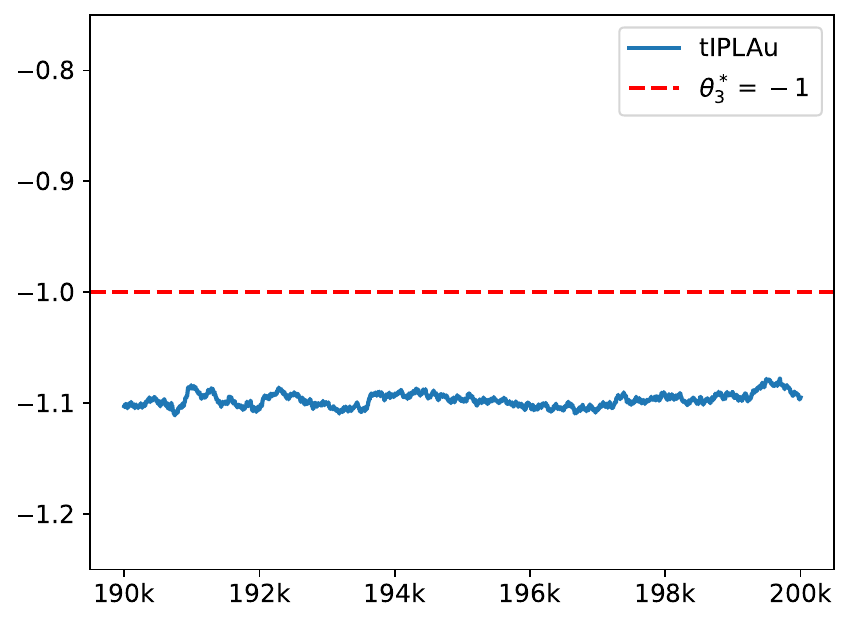}
    \end{minipage}
    \caption{The last 10k iterates from \hyperref[algo1]{tIPLAu}'s performance as shown in Fig 1. Each column corresponds to one of the component of the true parameter $\theta^*$ = [2, 5, -1]. The top row corresponds to $N=10$, the middle row to $N=100$, and the bottom row to $N=1000$}
    \label{fig2}
\end{figure}
\subsection{Higher order - Toy example}\label{toy}
To demonstrate the applicability and versatility of our framework we consider the following highly superlinear toy example where an intractable integral has to be maximized with respect to the parameter $\theta$. Let for any $m\geq1$ the objective function to be defined as
\begin{align*} k(\theta)=\int_{\mathbb{R}^{d^x}}\exp\left(-|x|^{4m}-(|x|^{2m}+1)(|\theta|^{2m}+1)-|\theta|^{4m}\right.\\ \left.-|x|^4-(|x|^2+1)(|\theta|^2+1)-|\theta|^4\right)dx.\end{align*}
In this case the maximiser $\theta^*=\text{argmax}\{k(\theta)\}$ can easily be derived to be $\theta^*=0$ which helps us verify our experimental foundings. \noindent One simply observes that \begin{gather*}\exp\left(-|x|^{4m}-(|x|^{2m}+1)(|\theta|^{2m}+1)-|\theta|^{4m}-|x|^4-(|x|^2+1)(|\theta|^2+1)-|\theta|^4\right)\\\leq \exp(\left(-|x|^{4m}-(|x|^{2m}+1)-|x|^4-(|x|^2+1)\right),\end{gather*}
with the equality holding if and only if $\theta=0$. Hence by taking the definite integral with respect to $dx$ over the whole $\mathbb{R}^{d^x}$ we have
$$k(\theta)\leq \int_{\mathbb{R}^{d^x}}\exp{\left(-|x|^{4m}-(|x|^{2m}+1)-|x|^4-(|x|^2+1)\right)}dx:=k(\theta^*).$$
Now the potential $U(\theta,x)$ can be written as
\begin{align}U(\theta,x)=|x|^{4m}+(|x|^{2m}+1)(|\theta|^{2m}+1)+|\theta|^{4m}+|x|^4+(|x|^2+1)(|\theta|^2+1)+|\theta|^4,\label{toyU}\end{align}
with the following gradient structure
\begin{gather*} 
\nabla_{x}U=4m|x|^{4m-2}x+2m(|\theta|^{2m}+1)|x|^{2m-2}x+4|x|^2x+2(|\theta|^2+1)x,\\
\nabla_{\theta}U=4m|\theta|^{4m-2}\theta+2m(|x|^{2m}+1)|\theta|^{2m-2}\theta+4|\theta|^2\theta+2(|x|^2+1)\theta,
\end{gather*}
\noindent In light of Remark \ref{remark5} and Remark \ref{remark6} we verify that both \hyperlink{Assum1}{A1} and \hyperlink{Assum2}{A2} hold, in particular we have
\begin{gather*}
    |\nabla U(v)-\nabla U(v')|\leq3m2^{4m}\left(1+|v|^{4m-2}+|v'|^{4m-2}\right)|v-v'|,\\
    \left\langle v-v',\nabla U(v)-\nabla U(v')\right\rangle\geq 2|v-v'|^2,
\end{gather*}
where $v=(\theta,x)$. To check that \hyperlink{Assum3a}{A3-i} holds, we easily observe that
\begin{align*}
    \nabla_{\theta_i}U\theta_i\geq 2\theta^2_i \text{ and } \nabla_{x_i}U x_i\geq 2x^2_i.
\end{align*}
\begin{figure}[ht]
    \centering
    \begin{minipage}{0.5\textwidth}
        \centering
        \includegraphics[width=\textwidth]{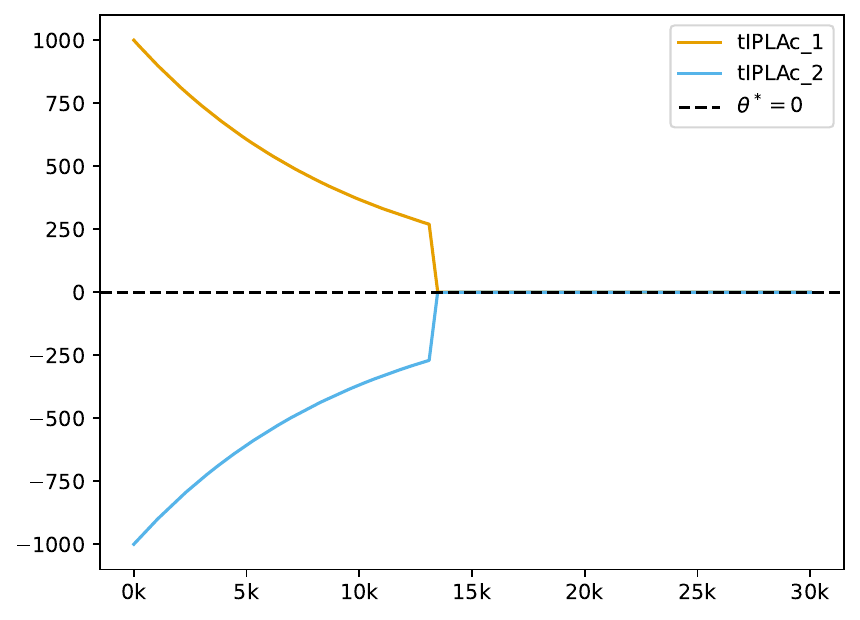}
    \end{minipage}\hfill
    \begin{minipage}{0.5\textwidth}
        \centering
        \includegraphics[width=\textwidth]{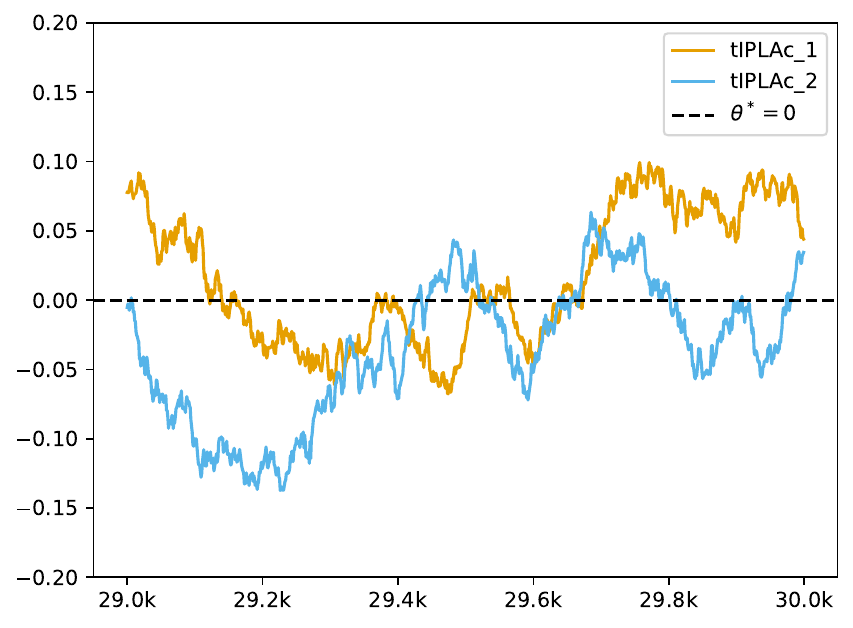}
    \end{minipage}
    \vfill  
    \begin{minipage}{0.5\textwidth}
        \centering
        \includegraphics[width=\textwidth]{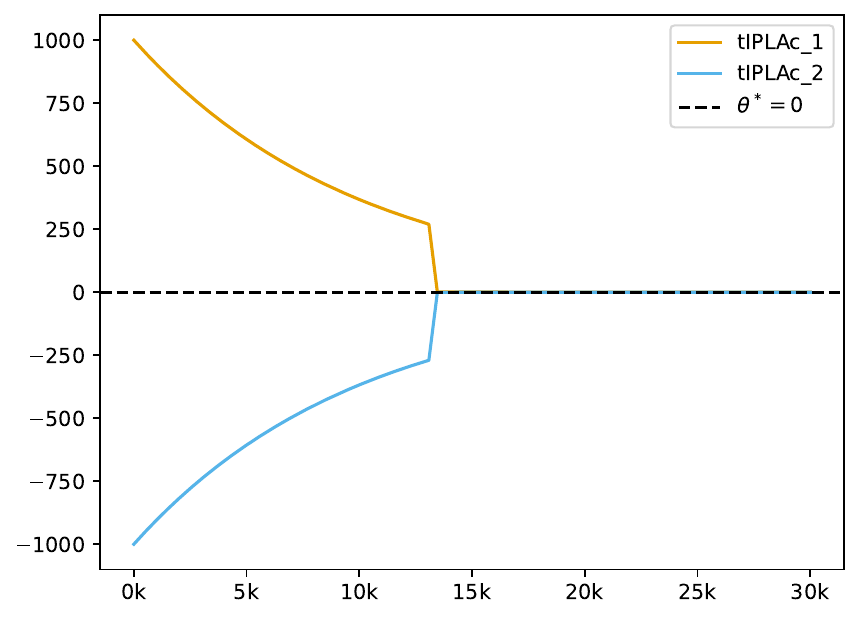}
    \end{minipage}%
    \hfill
    \begin{minipage}{0.5\textwidth}
        \centering
        \includegraphics[width=\textwidth]{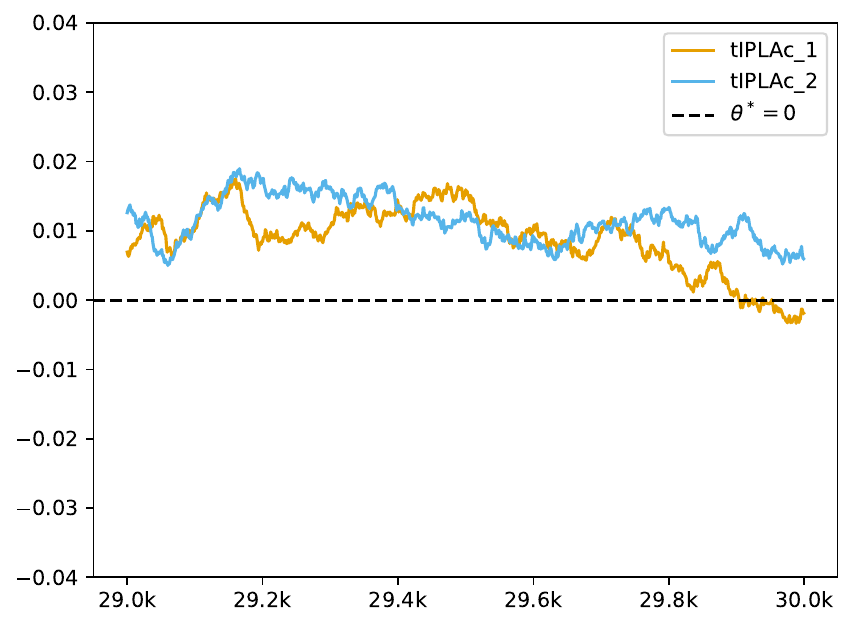}
    \end{minipage}
    \vfill  
    \begin{minipage}{0.5\textwidth}
        \centering
        \includegraphics[width=\textwidth]{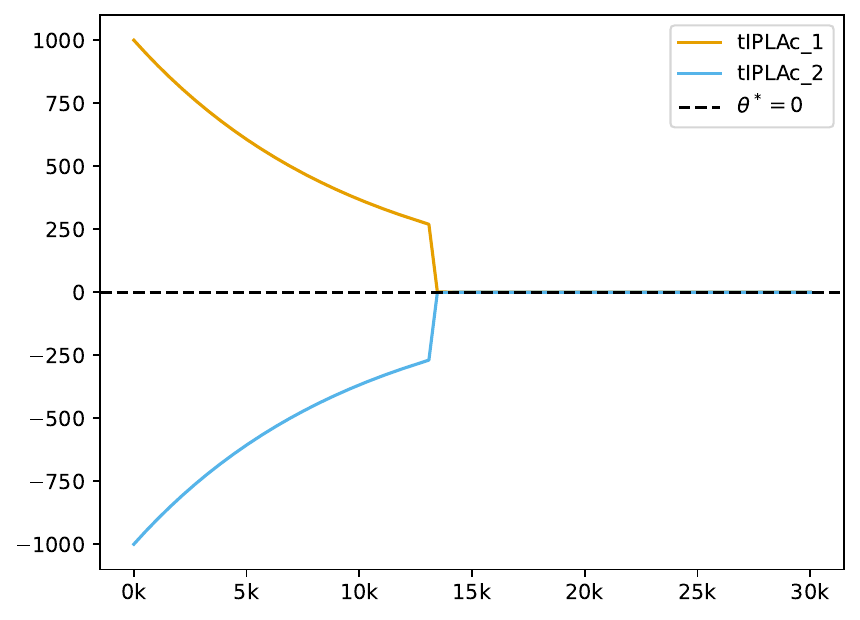}
    \end{minipage}%
    \hfill
    \begin{minipage}{0.5\textwidth}
        \centering
        \includegraphics[width=\textwidth]{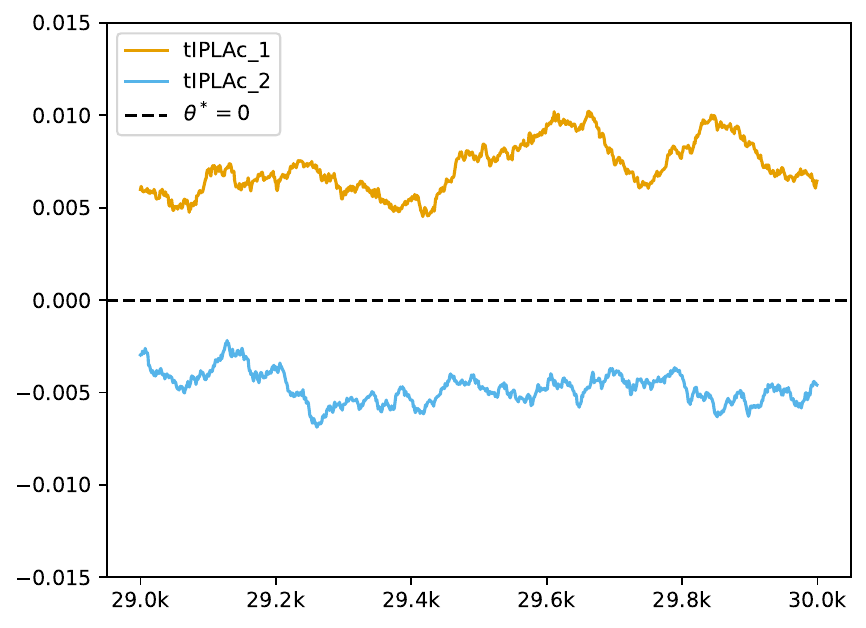}
    \end{minipage}
    \caption{The performance of \hyperref[algo2]{tIPLAc} on the superlinear toy problem for $m=15$. The top row corresponds to $N=100$, the middle row to $N=1000$, and the bottom row to $N=10,000$. The left column displays the full iterates of the algorithm, while the right column focuses on the last 1000 iterations to examine any residual bias in the limiting behavior}
    \label{fig3}
\end{figure}
\clearpage
\noindent \textbf{Experiment details}.
\noindent We set $m=15$ to a achieve a highly superlinear regime, where terms of the $60^{\text{th}}$ order appear. For the experiment, we choose $d^x=100$ and $d^{\theta} = 2$, with the true parameter $\theta^*$ known to be the zero vector $(0,0)$ to test parameter convergence. We run \hyperref[algo2]{tIPLAc}, for $M = 30\text{k}$ iterations, using $N = 100, 1000, 10000$ particles and a stepsize of $\lambda = 0.0001$. The parameter ${\theta}_0$ is initialized on purpose from a very high deterministic value $[1000, -1000]$, while the particles $X^{i,N}_0$ are drawn from a Gaussian distribution with randomized mean over the interval $(-100,100)$ for each coordinate, and covariance matrix $R=10I_{d^x}$.\\
\indent As shown in Fig. \ref{fig3}, we observe that as $N$ increases, the convergence time to the true value remains unchanged, contrary to the results from the \hyperref[algo1]{tIPLAu} algorithm. This is explained by the absence of $N$ in the exponent of the second error term in Theorem \ref{theorem:2}. Furthermore, focusing on the second column, which displays the last 1000 iterations of each simulation, we notice a significant decrease of the residual bias of the algorithm once convergence has been achieved. This aligns with our theoretical finding, where we observe an one order of magnitude reduction in bias corresponding to a two order of magnitude increase in $N$. Additionally, it is evident that as $N$ increases, the iterates of the algorithm generate less noisy Markov chains, confirming that $N$ acts as the inverse temperature parameter in our setting (Fig. \ref{fig4}).\\
\begin{figure}[ht]
    \centering
    \includegraphics[width=0.5\textwidth]{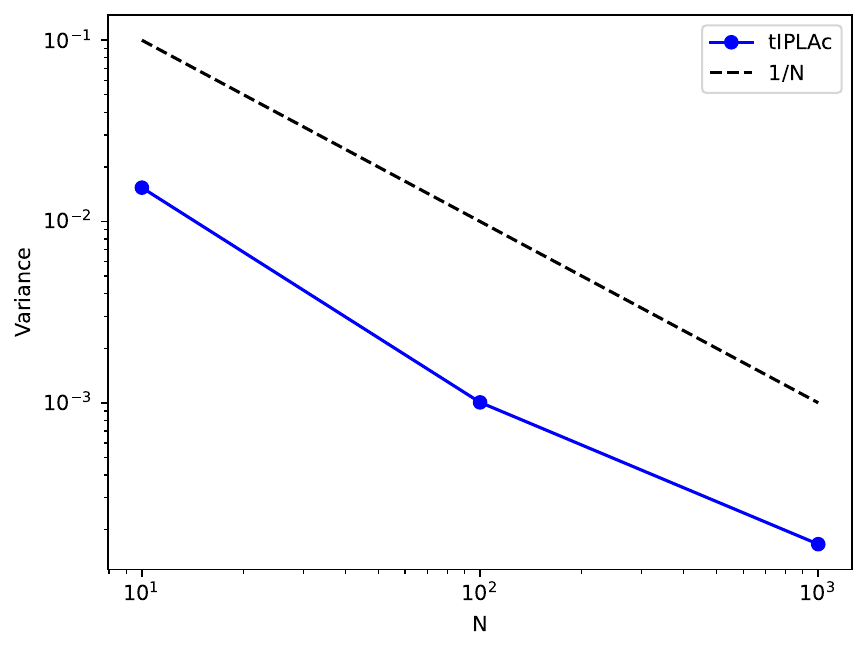} 
    \caption{The convergence rate of the variance of the parameter estimates produced by tIPLAc over 100 Monte Carlo runs for each $N\in(10,100,100)$. We verify that the $\mathcal{O}\left(1/N\right)$ convergence rate holds for the second moments as suggested by our theoretical results}
    \label{fig4}
\end{figure}
 Furthermore, we deploy \hyperref[algo2]{tIPLAc} in par with the other aforementioned interacting particles based algorithms, that is IPLA \cite{Ipla}, PGD \cite{Kuntz} and SOUL \cite{Soul} in a moderate superlinear setting with $m=1$ and $N=2$ number of particles. Once again we choose $d^x=100$ and $d^{\theta}=2$ , with the true parameter $\theta^*$ known to be the zero vector $(0,0)$ to test parameter convergence. We run all all the algorithms for $M=3000$ iterations and a stepsize of $\lambda=0.0001.$ The parameter ${\theta}_0$ is initialized from a deterministic value $[3.28345,-3.28345]$, while the particles $X^{i,N}_0$ are drawn from a Gaussian distribution with randomized mean over the interval $(-0.1,0.1)$ for each coordinate, and covariance matrix $R=0.1I_{d^x}$.\\
 \indent As illustrated in Fig \ref{fig5}, \hyperref[algo2]{tIPLAc} demonstrates rapid convergence to the true parameter. In contrast, the non-tamed algorithms exhibit oscillatory behavior, ultimately diverging to infinity. This divergence arises due to the lack of theoretical guarantees in handling superlinearities, which are intrinsic to the tIPLA algorithms. These observations align with existing literature on unadjusted Langevin-based algorithms, where it is well-established that, in a superlinear setting, the iterates tend to diverge given sufficiently poor initial conditions. In this simple case, although the initial condition is only an order of magnitude away from the ground truth, divergence is still observed. Specifically, SOUL diverges as early as the third iteration, while IPLA and PGD exhibit divergence by the fifth iteration.\\
\indent For the subsequent simulations, we set $m=6$ to a achieve a mild superlinear regime with $12^{\text{th}}$ order terms. We compare \hyperref[algo1]{tIPLAu} and \hyperref[algo2]{tIPLAc} under high dimensional settings. In particular we examine two cases per algorithm, namely $d^{\theta}=d^x=100$ and $d^{\theta}=d^x=1000$, with the true parameter $\theta^*$ known to be the zero vector to test parameter convergence. For \hyperref[algo1]{tIPLAu}, we run $M = 50\text{k}$ iterations, using $N = 50$ particles and a stepsize of $\lambda = 0.0001$. Respectively \hyperref[algo2]{tIPLAc} is run, for $M = 20\text{k}$ iterations, using $N = 50$ particles and a stepsize of $\lambda = 0.0001$. The parameter $\theta$ is a $d^{\theta}$-dimensional vector where each coordinate is independently initialized to either $1000$ or $-1000$, while the particles $X^{i,N}_0$ are drawn from a Gaussian distribution with randomized mean over the interval $(-10,10)$ for each coordinate, and covariance matrix $R=10I_{d^x}$.\\
As illustrated in Fig. \ref{fig6}, focusing on the first two rows, \hyperref[algo2]{tIPLAc} converges to the true parameter significantly faster than \hyperref[algo1]{tIPLAu} in both settings. This aligns with our theoretical results, as the convergence rate of  \hyperref[algo1]{tIPLAu} deteriorates with increasing $N$ due to time-scaling effects, whereas \hyperref[algo2]{tIPLAc} remains unaffected. Additionally, both algorithms exhibit slower convergence when the problem's dimensionality increases by an order of magnitude, consistent with the established Corollaries.\\
Examining the last two rows, which display the final 1500 iterations, we observe that the residual bias of  \hyperref[algo1]{tIPLAu} is more sensitive to dimensionality, whereas \hyperref[algo2]{tIPLAc} remains robust. Moreover, \hyperref[algo2]{tIPLAc} consistently yields lower residual bias than  \hyperref[algo1]{tIPLAu}. It is expected for coordinate-wise taming algorithms such as \hyperref[algo2]{tIPLAc} to outperform their uniform counterparts, as the latter indiscriminately modify all coordinates, failing to distinguish between stable and exploding ones. When the additional smoothness Assumption \hyperlink{Assum3a}{3} holds, \hyperref[algo2]{tIPLAc} is preferable to \hyperref[algo1]{tIPLAu}.
\begin{figure}[ht]
    \centering
    \begin{minipage}{0.5\textwidth}
        \centering
        \includegraphics[width=\textwidth]{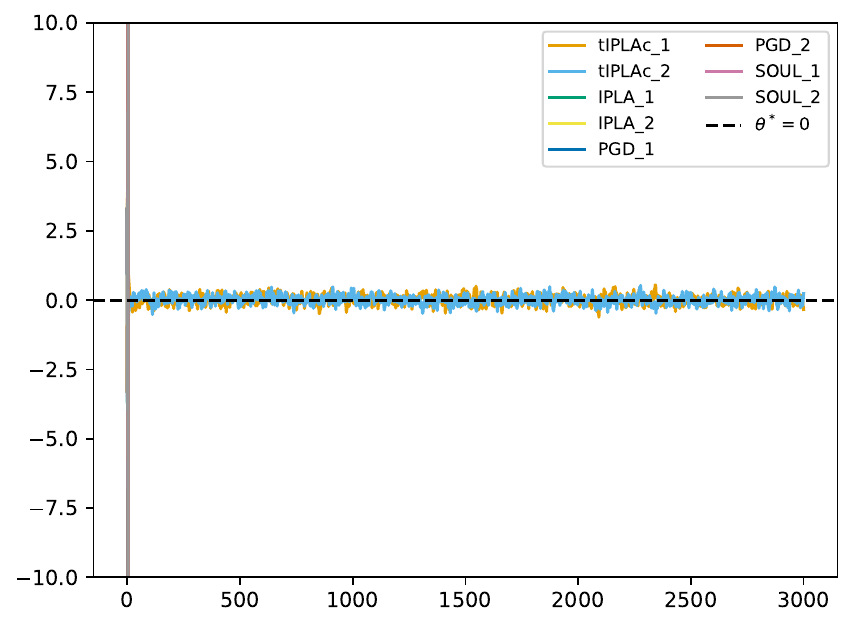}
    \end{minipage}\hfill
    \begin{minipage}{0.5\textwidth}
        \centering
        \includegraphics[width=\textwidth]{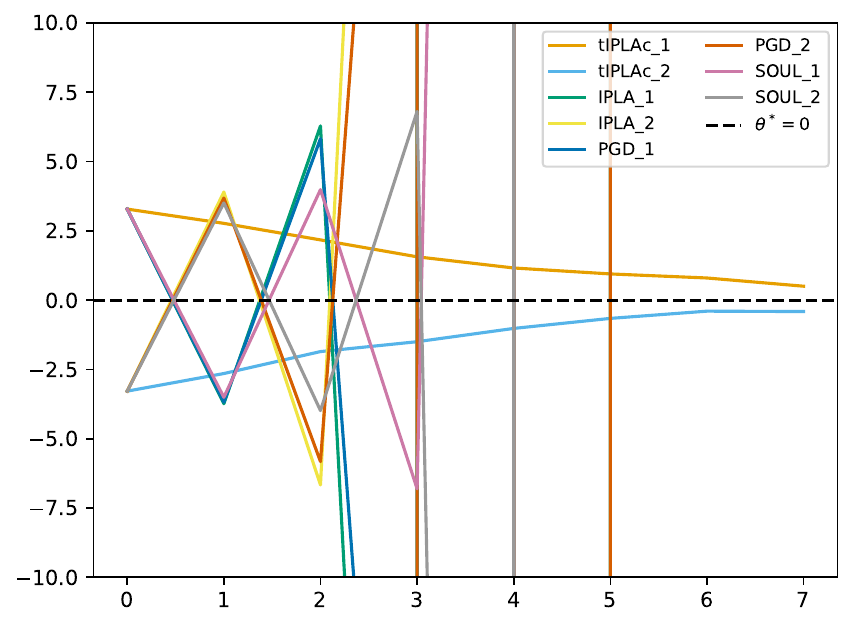}
    \end{minipage}
    \caption{The performance of \hyperref[algo2]{tIPLAc}, IPLA, PGD and SOUL on the superlinear toy problem for $m=1$ and $N=100$. The left column displays the full iterates of the algorithms, while the right column focuses on the first 8 iterations where the explosive behavior of the non-tamed algorithms is observed}
    \label{fig5}
\end{figure}
\begin{figure}[ht]
    \centering
    \begin{minipage}{0.5\textwidth}
        \centering
        \includegraphics[width=\textwidth,
  height=0.225\textheight,
  keepaspectratio=false]{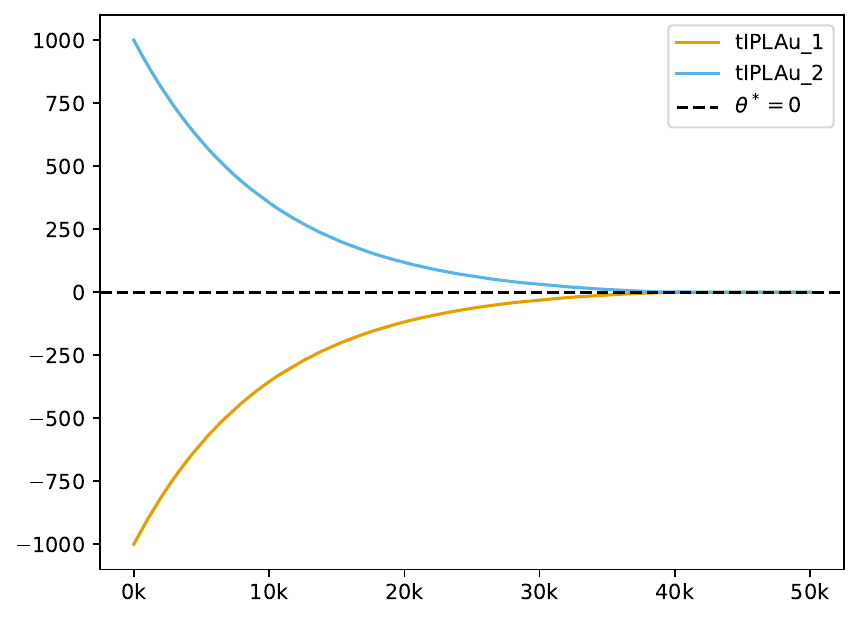}
    \end{minipage}\hfill
    \begin{minipage}{0.5\textwidth}
        \centering
        \includegraphics[width=\textwidth,
  height=0.225\textheight,
  keepaspectratio=false]{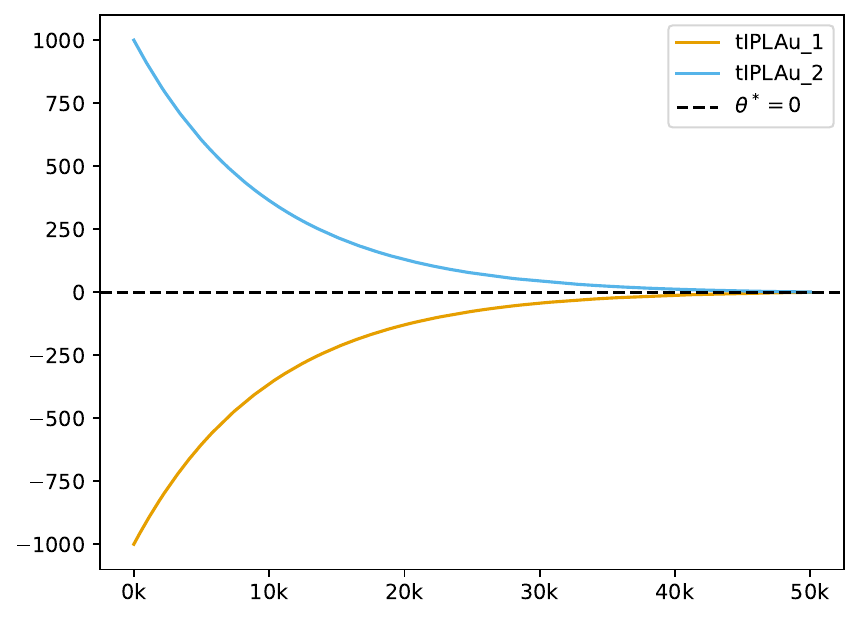}
    \end{minipage}
    \vfill
    \centering
    \begin{minipage}{0.5\textwidth}
        \centering
        \includegraphics[width=\textwidth,
  height=0.225\textheight,
  keepaspectratio=false]{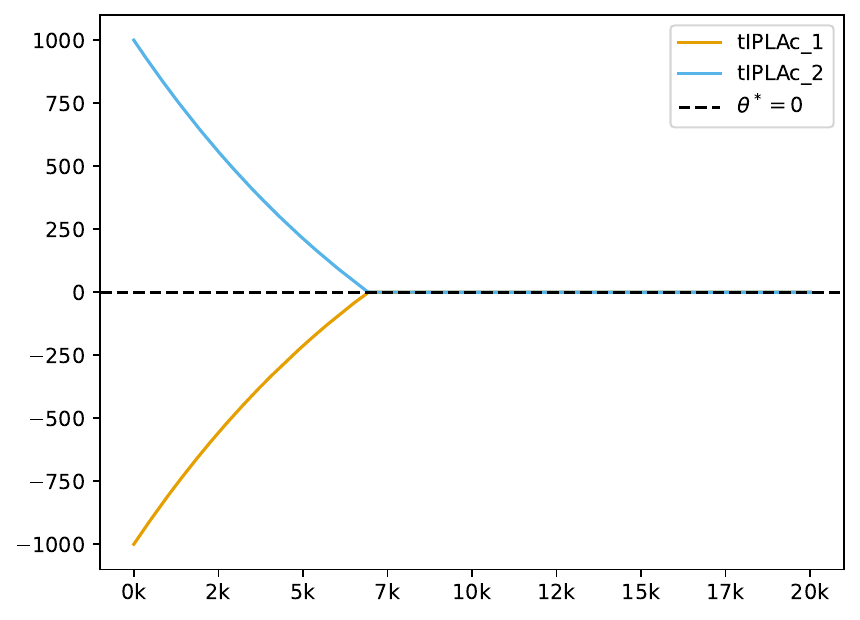}
    \end{minipage}\hfill
    \begin{minipage}{0.5\textwidth}
        \centering
        \includegraphics[width=\textwidth,
  height=0.225\textheight,
  keepaspectratio=false]{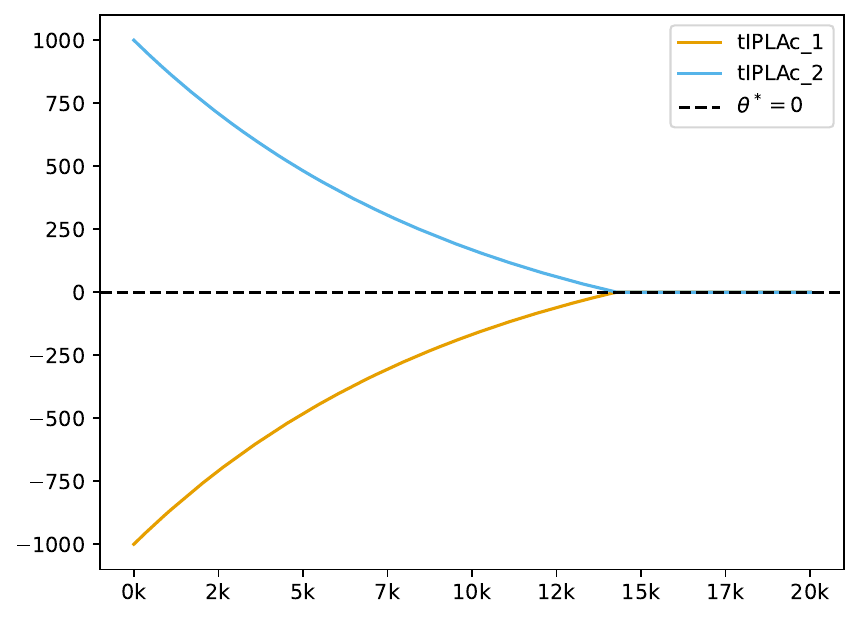}
    \end{minipage}
    \vfill
    \centering
    \begin{minipage}{0.5\textwidth}
        \centering
        \includegraphics[width=\textwidth,
  height=0.225\textheight,
  keepaspectratio=false]{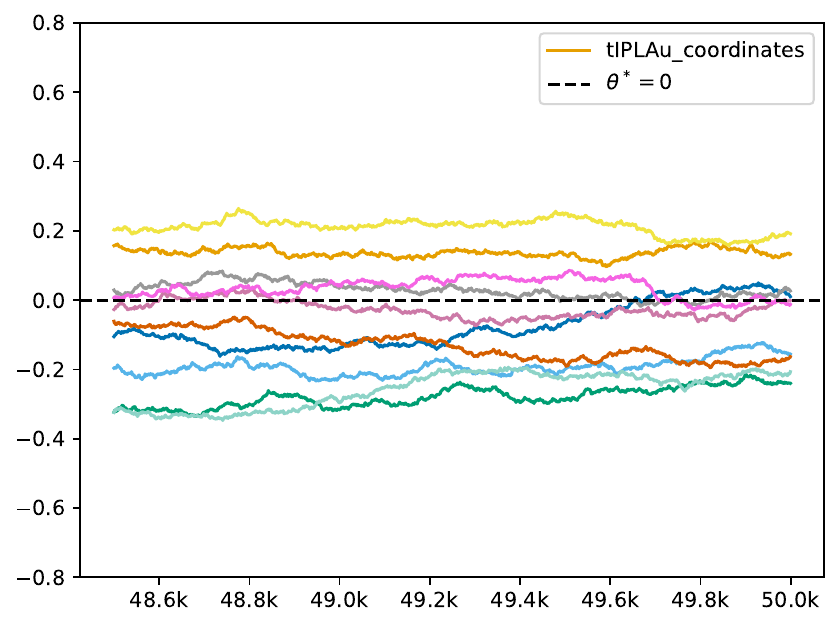}
    \end{minipage}\hfill
    \begin{minipage}{0.5\textwidth}
        \centering
        \includegraphics[width=\textwidth,
  height=0.225\textheight,
  keepaspectratio=false]{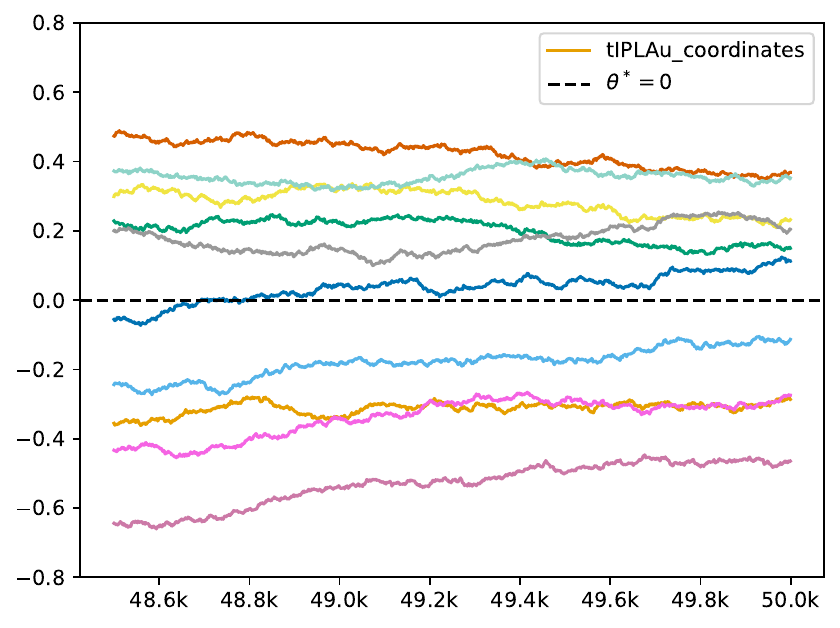}
    \end{minipage}
    \vfill
    \centering
    \begin{minipage}{0.5\textwidth}
        \centering
        \includegraphics[width=\textwidth,
  height=0.225\textheight,
  keepaspectratio=false]{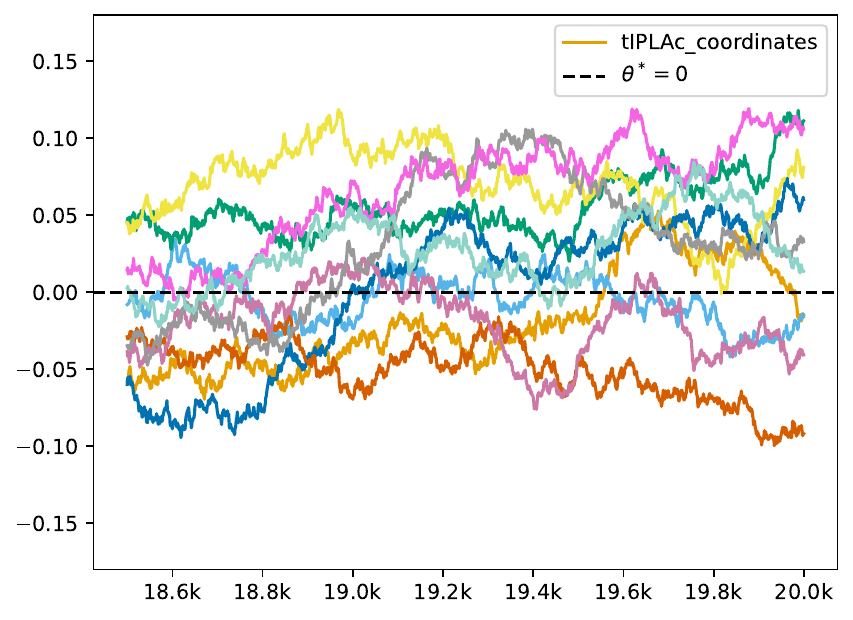}
    \end{minipage}\hfill
    \begin{minipage}{0.5\textwidth}
        \centering
        \includegraphics[width=\textwidth,
  height=0.225\textheight,
  keepaspectratio=false]{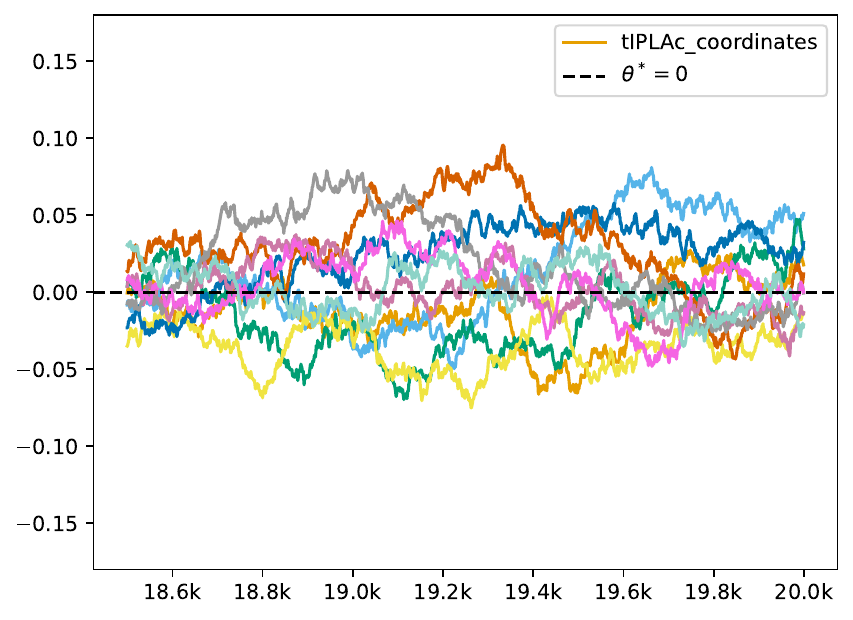}
    \end{minipage}
    \caption{Performance comparison between \hyperref[algo1]{tIPLAu} and \hyperref[algo2]{tIPLAc} on the superlinear toy problem. The left column corresponds to $d^x=d^{\theta}=100$, and the right column to $d^x=d^{\theta}=1000$. The first two rows display the full iterates of the algorithm, while the last two rows focus on the last 1500 iterations to examine any residual bias in the limiting behavior}
    \label{fig6}
\end{figure}
\subsection{Mixed term - Toy example}
To demonstrate the applicability of \hyperref[algo2]{tIPLAc} under assumption \hyperlink{Assum3b}{A3-ii}, we consider the example from Section \hyperref[toy]{4.2} for $m=1$ but with an additional cross term $x\theta$,
$$k(\theta)=\int_{\mathbb{R}^{d^x}}\exp{\left(-x\theta-|x|^4-(|x|^2+1)(|\theta|^2+1)-|\theta|^4\right)}dx.$$
Hence the potential $U(\theta,x)=x\theta+|x|^4+(|x|^2+1)(|\theta|^2+1)+|\theta|^4$ has the following gradient structure
\begin{align*}
    \nabla_x U=4|x|^2x+2(|\theta|^2+1)x+\theta \text{ and } \nabla_{\theta} U=4|\theta|^2\theta+2(|x|^2+1)\theta +x.
\end{align*}
One immediately checks that \hyperlink{Assum1}{A1} holds, and in view of Remark \ref{remark7}, \hyperlink{Assum2}{A2} is obtained as well, that is
\begin{gather*}
    |\nabla U(v)-\nabla U(v')|\leq52\left(1+|v|^{3}+|v'|^{3}\right)|v-v'|,\\
    \left\langle v-v',\nabla U(v)-\nabla U(v')\right\rangle\geq |v-v'|^2,
\end{gather*}
where $v=(\theta,x)$. However the gradient of the potential $U(\theta,x)$ fails to satisfy \hyperlink{Assum3a}{A3-i}, as shown below \begin{figure}[ht]
    \centering
    \begin{minipage}{0.5\textwidth}
        \centering
        \includegraphics[width=\textwidth,
  height=0.25\textheight,
  keepaspectratio=false]{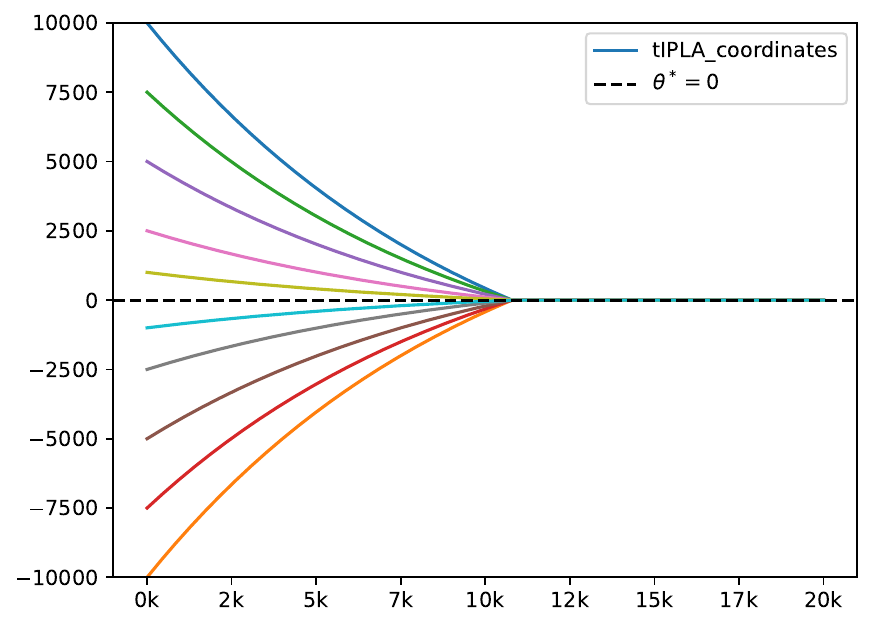}
    \end{minipage}\hfill
    \begin{minipage}{0.5\textwidth}
        \centering
        \includegraphics[width=\textwidth,
  height=0.25\textheight,
  keepaspectratio=false]{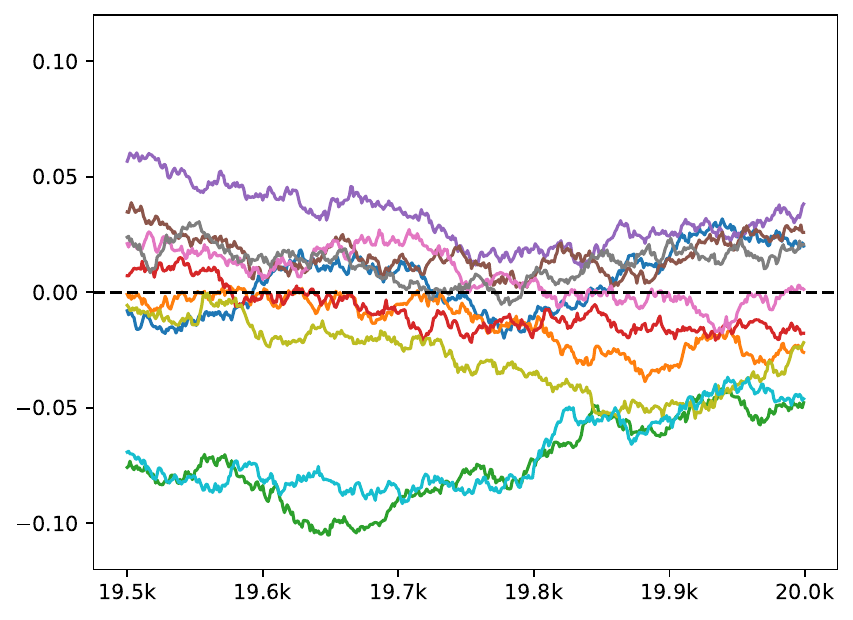}
    \end{minipage}
    \vfill
    \centering
    \begin{minipage}{0.5\textwidth}
        \centering
        \includegraphics[width=\textwidth,
  height=0.25\textheight,
  keepaspectratio=false]{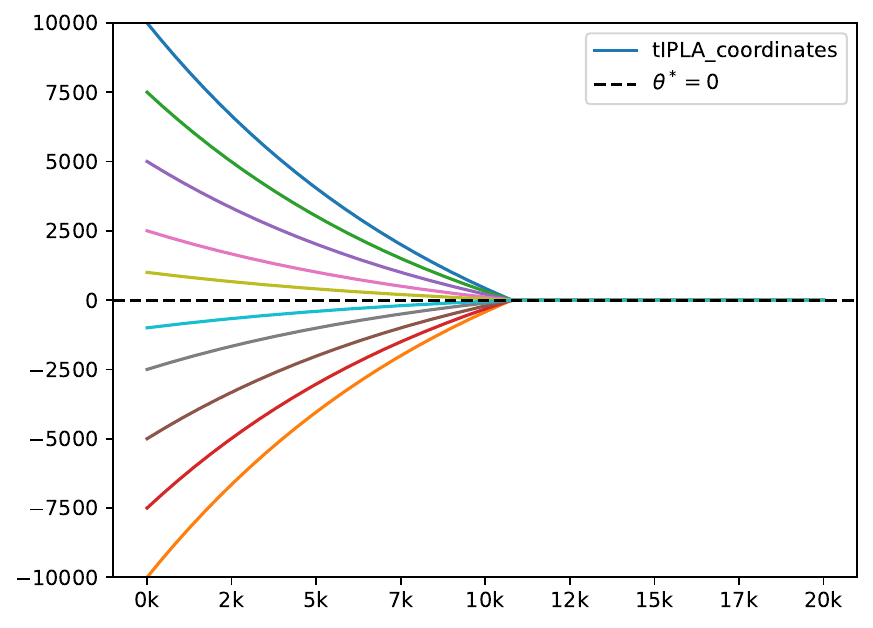}
    \end{minipage}\hfill
    \begin{minipage}{0.5\textwidth}
        \centering
        \includegraphics[width=\textwidth,
  height=0.25\textheight,
  keepaspectratio=false]{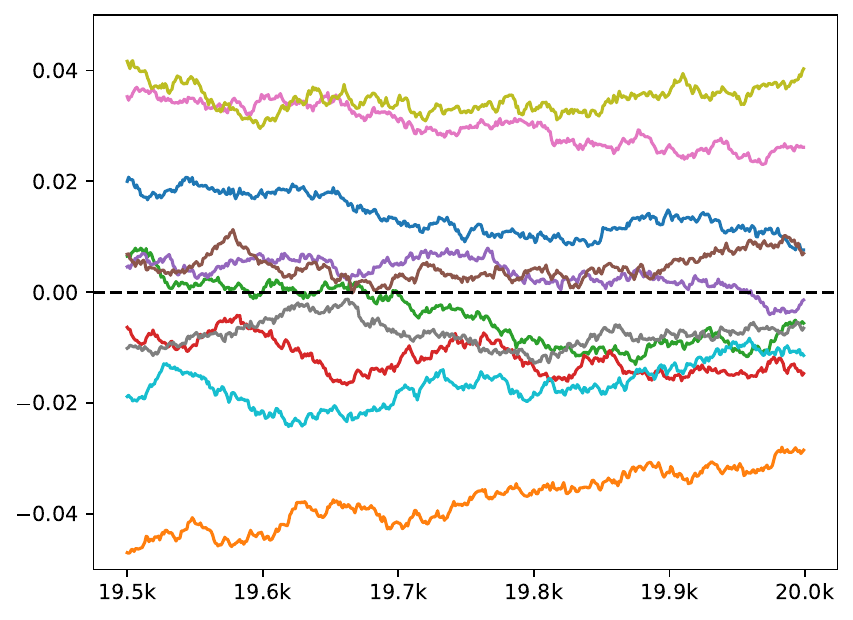}
    \end{minipage}
    \vfill
    \centering
    \begin{minipage}{0.5\textwidth}
        \centering
        \includegraphics[width=\textwidth,
  height=0.25\textheight,
  keepaspectratio=false]{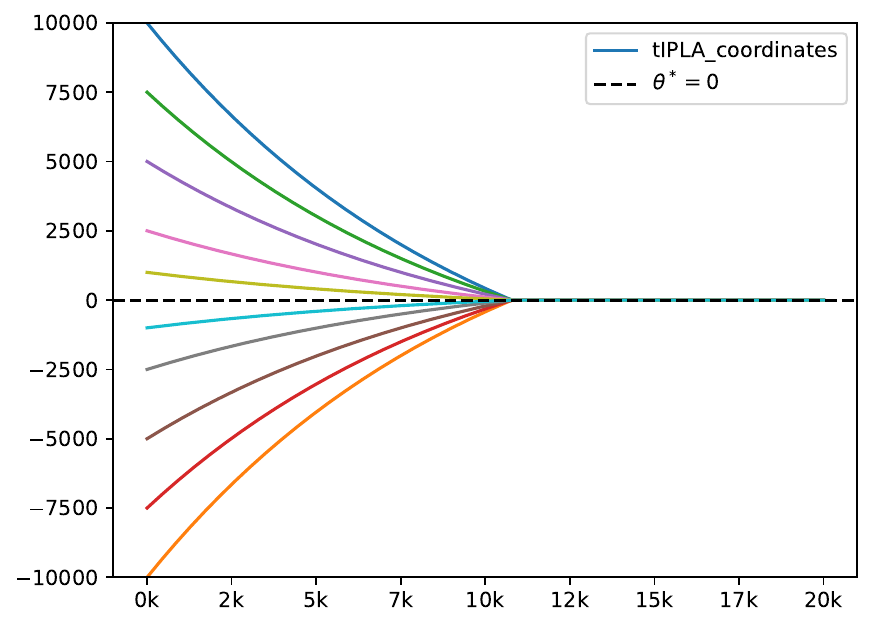}
    \end{minipage}\hfill
    \begin{minipage}{0.5\textwidth}
        \centering
        \includegraphics[width=\textwidth,
  height=0.25\textheight,
  keepaspectratio=false]{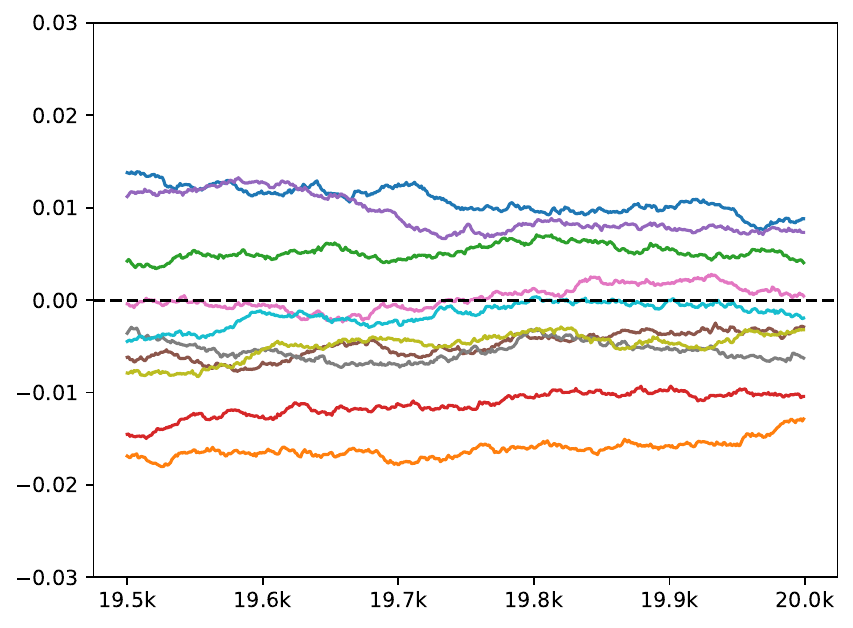}
    \end{minipage}
    \caption{The performance of \hyperref[algo2]{tIPLAc} on the superlinear toy problem under assumption A3-ii. The top row corresponds to $N=100$, the middle row to $N=1000$, and the bottom row to $N=10,000$. The left column displays the full iterates of the algorithm, while the right column focuses on the last 500 iterations to examine any residual bias in the limiting behavior}
    \label{fig7}
\end{figure}
\begin{align*}
    \left(\nabla_x U\right)_i x_i=4|x|^2x^2_i+2(|\theta|^2+1)x^2_i+x_i\theta_i\geq 2x^2_i+x_i\theta_i,
\end{align*}
since the product $x_i\theta_i$ cannot be controlled due to the fact that $\theta$ takes values in an unbounded domain. Nevertheless, we show that \hyperlink{Assum3b}{A3-ii} holds with parameters $\mu=3,\ \rho=1/2 \text{ and } b=0$. Indeed
\begin{align*}
    \left(\nabla_x U\right)_i x_i\geq 2x^2_i-\dfrac{x_i^2}{2}-\dfrac{\theta_i^2}{2}=\dfrac{3}{2}x_i^2-\dfrac{1}{2}\theta_i^2.
\end{align*}
\noindent \textbf{Experiment details}.
We choose $d^x=d^{\theta} = 10$, with the true parameter $\theta^*$ being unknown. We run \hyperref[algo2]{tIPLAc}, for $M = 20\text{k}$ iterations, using $N = 100, 1000, 10000$ particles and a stepsize of $\lambda = 0.0001$. The parameter ${\theta}_0$ is initialized from a deterministic value $[10,000, -10,000, 7500,-7500,5000,-5000,2500,-2500,1000,-1000]$, while the particles $X^{i,N}_0$ are drawn from a Gaussian distribution with randomized mean over the interval $(-100,100)$ for each coordinate, and covariance matrix $R=10I_{d^x}$.\\
\indent As shown in Fig. \ref{fig7}, we observe that as $N$ increases, the convergence time to the true value remains unchanged, which agrees with our observation in Fig. \ref{fig3}. Furthermore, focusing on the second column, which displays the last 500 iterations of each simulation, we notice again that the residual bias of the algorithm, once convergence has been achieved, decreases significantly as the number of particles increases.
\clearpage
\section{References} \vspace{-2.0em} \renewcommand\refname{} 
\appendix
\section{Proofs of Section 3}
\subsection{Uniform moments bounds}
\begin{lemma}\label{lemma0} Consider either the dynamics given by \eqref{eq:00}-\eqref{eq:01} or \eqref{eq:02}-\eqref{eq:03} and let Assumptions \hyperlink{Assum1}{A1}-\hyperlink{Assum2}{A2} hold, then there exists a constant $C>0$ such that $$\sup_{t\geq 0}\mathbb{E}\left[|\mathcal{Z}_{t}^{\cdot,N}|^2\right]\leq C.$$\end{lemma}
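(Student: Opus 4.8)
The plan is to derive a uniform-in-time second moment bound for the rescaled particle system $\mathcal{Z}_t^{\cdot,N}$ by applying It\^o's formula to $|\mathcal{Z}_t^{\cdot,N}|^2$ and exploiting the dissipativity inherited from \hyperlink{Assum2}{A2} (Remark \hyperlink{Remark2}{2}). First I would note, using the rescaling identity $|\mathcal{Z}_t^{\cdot,N}|^2 = \frac{1}{N}\sum_{i=1}^N |\mathcal{V}_t^{i,\cdot,N}|^2$ with $\mathcal{V}_t^{i,\cdot,N} = (\vartheta_t^{\cdot,N}, \mathcal{X}_t^{i,\cdot,N})$, that it suffices to track the evolution of $\frac{1}{N}\sum_i |\mathcal{V}_t^{i,\cdot,N}|^2$. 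Writing down the SDE for $\mathcal{Z}_t^{\cdot,N}$ (for either system \eqref{eq:00}-\eqref{eq:01} or \eqref{eq:02}-\eqref{eq:03}; the time-scaling factor $1/N^p$ in the latter only rescales time and does not affect a stationary moment bound), applying It\^o gives a drift term of the form $-\frac{2}{N}\sum_{i=1}^N \langle \mathcal{V}_t^{i,\cdot,N}, h(\mathcal{V}_t^{i,\cdot,N})\rangle$ plus a trace term from the constant diffusion coefficients. The key point is that, summing the dissipativity bound $\langle v, h(v)\rangle \geq \frac{\mu}{2}|v|^2 - b$ over the $N$ particles and dividing by $N$, the drift is controlled by $-\mu|\mathcal{Z}_t^{\cdot,N}|^2 + 2b$, while the diffusion/trace contribution is a constant depending on $d^\theta, d^x$ (and possibly $N$, but bounded).

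Next I would take expectations to obtain a differential inequality of Gr\"onwall type,
\begin{align*}
\frac{d}{dt}\mathbb{E}\left[|\mathcal{Z}_t^{\cdot,N}|^2\right] \leq -c_1 \mathbb{E}\left[|\mathcal{Z}_t^{\cdot,N}|^2\right] + c_2,
\end{align*}
for positive constants $c_1, c_2$ independent of $t$, with $c_1$ proportional to $\mu$ and $c_2$ collecting the dissipativity constant $b$ and the trace term. Integrating this inequality yields
\begin{align*}
\mathbb{E}\left[|\mathcal{Z}_t^{\cdot,N}|^2\right] \leq e^{-c_1 t}\,\mathbb{E}\left[|\mathcal{Z}_0^{\cdot,N}|^2\right] + \frac{c_2}{c_1},
\end{align*}
and taking the supremum over $t \geq 0$ gives the claimed bound $C = \mathbb{E}[|\mathcal{Z}_0^{\cdot,N}|^2] + c_2/c_1$, which is finite by \hyperlink{Assum4}{A4} (indeed $\mathbb{E}[|z_0^N|^{2p_0}] < \infty$ with $p_0 = 2(\ell+1) \geq 1$ gives finiteness of the second moment of the initial condition). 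To make the It\^o argument rigorous despite the superlinear drift, I would localize with a sequence of stopping times $\tau_R = \inf\{t : |\mathcal{Z}_t^{\cdot,N}| \geq R\}$, derive the inequality on $[0, t\wedge\tau_R]$, and pass to the limit $R \to \infty$ using Fatou's lemma on the left and monotone/dominated convergence on the right; the dissipativity bound ensures no blow-up, so $\tau_R \to \infty$ a.s.

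The main obstacle I anticipate is bookkeeping the $N$-dependence carefully enough that the constant $C$ is genuinely uniform in $N$ (which matters downstream, since $N$ plays the role of an inverse temperature and the later propositions need clean $N$-scaling). The trace term from the diffusion coefficients in \eqref{eq:00}-\eqref{eq:01} contributes $\frac{2}{N}(d^\theta \cdot \frac{1}{N} \cdot N + \ldots)$ — one must check that after the $\frac{1}{N}$ averaging from the rescaling \eqref{eq:001}, the dimensional factors combine to something like $\frac{2(d^\theta + N d^x)}{N} \cdot \frac{1}{N}$ or similar and do not grow with $N$; a parallel check is needed for the time-scaled system where the extra $1/N^p$ multiplies both drift and diffusion equally and so cancels in the stationary bound. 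A secondary, more minor point is handling both dynamics \eqref{eq:00}-\eqref{eq:01} and \eqref{eq:02}-\eqref{eq:03} in one stroke: since the time-change $t \mapsto t/N^p$ is a deterministic reparametrization, the supremum over $t$ of the second moment is identical for the two, so it is enough to prove the bound for \eqref{eq:00}-\eqref{eq:01} and invoke this observation.
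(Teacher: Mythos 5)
Your proposal is correct and follows essentially the same route as the paper: It\^o's formula applied to $|\mathcal{Z}_t^{\cdot,N}|^2$, the dissipativity bound from Remark 2 summed over particles, a Gr\"onwall-type differential inequality for the expectation, and a localization/Fatou argument to justify the computation despite the superlinear drift; the paper likewise treats the time-scaled system \eqref{eq:02}--\eqref{eq:03} by noting the calculations differ only by constants in the coefficients. Your bookkeeping of the trace term as $2(d^{\theta}/N+d^x)$ matches the paper's exactly.
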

\begin{proof}
    Let us first consider the rescaled dynamics as given by \eqref{eq:001} for the original system of equations \eqref{eq:00}-\eqref{eq:01}. The square norm of these dynamics is expressed as
\begin{align*}
    |\mathcal{Z}_{t}^{c,N}|^2=|\vartheta_{t}^{c,N}|^2+\dfrac{1}{N}\sum_{i=1}^{N}|\mathcal{X}_{t}^{i,c,N}|^2.
\end{align*}
Using standard arguments involving stopping times, Gr\"{o}nwall's lemma and Fatou's lemma, we obtain the existence of a constant $c$, which depends on time, such that $\sup_{0\leq t\leq T}\mathbb{E}\left[|\mathcal{Z}_{t}^{c,N}|^2\right]\leq c$ for any $T>0$. Furthermore, by applying It\^{o}'s formula, we derive
\begin{align*}
    |\mathcal{Z}_{t}^{c,N}|^2&\leq |\vartheta_{0}^{c,N}|^2-2\int_0^t\langle \vartheta_{s}^{\cdot,N},\dfrac{1}{N}\sum_{i=1}^N h^{\theta}(\mathcal{V}_{s}^{i,c,N})\rangle ds+\dfrac{2d^{\theta}t}{N}+2\sqrt{\dfrac{2}{N}}\int_0^t \vartheta_{s}^{c,N}dB_{s}^{0,N}+2d^x t\\
    &+\dfrac{1}{N}\sum_{i=1}^N|\mathcal{X}_{0}^{i,c,N}|^2-\dfrac{2}{N}\sum_{i=1}^N\int_0^t\langle \mathcal{X}_s^{i,c,N},h^{x}(\mathcal{V}_{s}^{i,c,N})\rangle ds+\dfrac{2\sqrt{2}}{N}\sum_{i=1}^N\int_0^t \mathcal{X}_{s}^{i,c,N}dB_{s}^{i,N}ds\\
    &\leq |\mathcal{Z}_{0}^{c,N}|^2-2\int_0^t\dfrac{1}{N}\sum_{i=1}^N\langle \mathcal{V}_{s}^{i,c,N},h(\mathcal{V}_{s}^{i,c,N})\rangle ds+2(d^{\theta}/N+d^x)t\\
    &+2\sqrt{\dfrac{2}{N}}\int_0^t \vartheta_{s}^{c,N}dB_{s}^{0,N}+\dfrac{2\sqrt{2}}{N}\sum_{i=1}^N\int_0^t \mathcal{X}_{s}^{i,c,N}dB_{s}^{i,N}ds\\
    &\leq |\mathcal{Z}_{0}^{c,N}|^2-\mu\int_0^t |\mathcal{Z}_{s}^{c,N}|^2 ds+2bt+2(d^{\theta}/N+d^x)t\\
    &+2\sqrt{\dfrac{2}{N}}\int_0^t \vartheta_{s}^{c,N}dB_{s}^{0,N}+\dfrac{2\sqrt{2}}{N}\sum_{i=1}^N\int_0^t \mathcal{X}_{s}^{i,c,N}dB_{s}^{i,N}ds.
\end{align*}Taking the expectation on both sides, we obtain
\begin{gather*}
\mathbb{E}\left[|\mathcal{Z}_{t}^{c,N}|^2\right]\leq \mathbb{E}\left[|\mathcal{Z}_{0}^{c,N}|^2\right]-\mu\int_0^t \mathbb{E}\left[|\mathcal{Z}_{s}^{c,N}|^2\right]ds+2(b+d^{\theta}/N+d^x)t.\\
    \dfrac{d}{dt}\exp{(\mu t)}\mathbb{E}\left[|\mathcal{Z}_{t}^{c,N}|^2\right]\leq \exp{(\mu t)}C,
\end{gather*}
which, upon integration, leads to
\begin{align*}
    \mathbb{E}\left[|\mathcal{Z}_{t}^{c,N}|^2\right]\leq C/\mu\Rightarrow\sup_{t\geq 0}\mathbb{E}\left[|\mathcal{Z}_{t}^{c,N}|^2\right]\leq C,
\end{align*}
as C is a constant independent of time. The corresponding result regarding equations \eqref{eq:02}-\eqref{eq:03} follows by going through the same steps, with the calculations differing only up to a constant in the SDEs coefficients.
\end{proof}
\subsection{Uniformly tamed scheme }\label{appendix1}
\subsubsection{Key quantities for the proof of the main Lemmas.}
The following definitions refer to the rescaled dynamics \eqref{eq:002}-\eqref{eq:003} of the algorithm \hyperref[algo1]{tIPLAu} and its continuous time interpolations \eqref{eq:004}-\eqref{eq:005} \begin{align} Z^{\lambda,u}_{n}=\left(\theta_{n+1}^{\lambda,u},N^{-1/2}X_{n+1}^{1,\lambda,u},\ldots,N^{-1/2}X_{n+1}^{N,\lambda,u}\right),\label{rescale1}\end{align}
\begin{align} \overline{Z}^{\lambda,u}_t=\left(\overline{\theta}^{\lambda,u}_t,N^{-1/2}\overline{X}^{1,\lambda,u}_t,\ldots,N^{-1/2}\overline{X}^{N,\lambda,u}_t\right),\label{rescale31}\end{align}
\begin{align} \mathcal{Z}_{\lambda t}^{u,N}=\left(\vartheta_{\lambda t}^{u,N},N^{-1/2}\mathcal{X}_{\lambda t}^{1,u,N},\ldots,N^{-1/2}\mathcal{X}_{\lambda t}^{N,u,N}\right).\label{rescale21} \end{align}
\subsubsection{Moment and increment bounds}
\begin{lemma}\hypertarget{lemma1}{}\label{lemma1} Let \hyperlink{Assum1}{A1}, \hyperlink{Assum2}{A2} and \hyperlink{Assum4}{A4} hold. Then, for any $0\leq\lambda<N^p/(4\mu)$, it holds that
$$\mathbb{E}\left[|Z^{\lambda,u}_{n}|^2\right]\leq C_{|z_0|,\mu,b}(1+d^{\theta}/N+d^x),$$
for a constant $C>0$, independent of $N,n,\lambda, d^x \text{and } d^{\theta}$, given explicitly within the proof.\end{lemma}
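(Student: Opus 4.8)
The plan is the standard ``taming yields uniform moment bounds'' route: a one-step conditional estimate for the rescaled iterates, using only Property~\hyperlink{Property1}{1} (linear growth of $h_{\lambda,u}$) and Property~\hyperlink{Property3}{3} (inherited dissipativity), followed by a discrete Gr\"onwall / geometric-series argument. Note that for the second moment the polynomial order $\ell$ never enters the constant; it appears only through $p=2\ell+1$ in the stepsize threshold.

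\textbf{Step 1: one-step expansion in rescaled coordinates.} Writing the updates \eqref{eq:07}--\eqref{eq:08} in the rescaled variables \eqref{rescale1}, setting $V^i_n=(\theta^{\lambda,u}_n,X^{i,\lambda,u}_n)$, I would expand $\mathbb{E}[\,|Z^{\lambda,u}_n|^2\mid\mathcal F_n\,]$ by squaring each block. The Gaussian increments are independent and centred, so their cross terms vanish and their square terms contribute exactly $\frac{2\lambda}{N^{p+1}}d^\theta+\frac{2\lambda}{N^{p}}d^x=\frac{2\lambda}{N^{p}}(d^\theta/N+d^x)$. The key bookkeeping is that the two linear-drift inner products recombine as $\langle\theta_n,h^\theta_{\lambda,u}(V^i_n)\rangle+\langle X^i_n,h^x_{\lambda,u}(V^i_n)\rangle=\langle V^i_n,h_{\lambda,u}(V^i_n)\rangle$, while the rescaling identity $\frac1N\sum_{i=1}^N|V^i_n|^2=|Z^{\lambda,u}_n|^2$ of Subsection~2.3 collapses all resulting sums onto $|Z^{\lambda,u}_n|^2$, with a common factor $\lambda/N^{p}$ pulled out of the drift terms.

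\textbf{Step 2: absorb the drift via the taming, Step 3: iterate.} For the linear term, Property~\hyperlink{Property3}{3} gives $-\frac{2\lambda}{N^{p+1}}\sum_i\langle V^i_n,h_{\lambda,u}(V^i_n)\rangle\le-\frac{\mu\lambda}{N^{p}}|Z^{\lambda,u}_n|^2+\frac{2\lambda b}{N^{p}}$. For the drift-squared term, Cauchy--Schwarz ($|\sum_i a_i|^2\le N\sum_i|a_i|^2$) reduces the $\theta$-block to the same form as the $x$-blocks, and then $|h_{\lambda,u}(v)|^2\le 2\mu^2|v|^2+2\lambda^{-1}N^{p}$ (from Property~\hyperlink{Property1}{1}) gives $\frac{\lambda^2}{N^{2p+1}}\sum_i|h_{\lambda,u}(V^i_n)|^2\le\frac{2\mu^2\lambda^2}{N^{2p}}|Z^{\lambda,u}_n|^2+\frac{2\lambda}{N^{p}}$. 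Assembling,
\[\mathbb{E}\big[\,|Z^{\lambda,u}_n|^2\mid\mathcal F_{n-1}\,\big]\le\Big(1-\tfrac{\mu\lambda}{N^{p}}+\tfrac{2\mu^2\lambda^2}{N^{2p}}\Big)|Z^{\lambda,u}_{n-1}|^2+\tfrac{2\lambda}{N^{p}}\big(b+1+d^\theta/N+d^x\big),\]
and the hypothesis $\lambda<N^{p}/(4\mu)$ is exactly what forces $\frac{2\mu^2\lambda^2}{N^{2p}}\le\frac{\mu\lambda}{2N^{p}}$, so the contraction factor is at most $\rho:=1-\frac{\mu\lambda}{2N^{p}}\in(0,1)$. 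Taking full expectations and unrolling $a_n\le\rho\,a_{n-1}+D$ with $D=\frac{2\lambda}{N^{p}}(b+1+d^\theta/N+d^x)$ yields $a_n\le\rho^n a_0+D/(1-\rho)$; since $\rho^n\le1$, $D/(1-\rho)=\frac4\mu(b+1+d^\theta/N+d^x)$, and $a_0=\mathbb{E}[|z_0^N|^2]<\infty$ by \hyperlink{Assum4}{A4} (Jensen, $p_0=2(\ell+1)\ge1$), one gets $\mathbb{E}[|Z^{\lambda,u}_n|^2]\le\big(\mathbb{E}[|z_0^N|^2]+\tfrac4\mu(b+1)\big)(1+d^\theta/N+d^x)$, i.e.\ the claim with $C_{|z_0|,\mu,b}=\mathbb{E}[|z_0^N|^2]+\frac4\mu(b+1)$, manifestly independent of $N,n,\lambda,d^\theta,d^x$.

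I expect the only delicate point to be Step~1: keeping the three $N$-scalings ($1/N^{p+1}$ on the $\theta$-drift, $1/N^{p}$ on each $X^i$-drift, and the $N^{-1/2}$ of the rescaling) consistently aligned so that every drift and noise contribution collapses cleanly to a multiple of $|Z^{\lambda,u}_n|^2$ or of $(d^\theta/N+d^x)$ times the common factor $\lambda/N^{p}$; once that algebra is done, the taming estimates and the Gr\"onwall step are routine. In particular, unlike the higher-moment bounds used elsewhere in the proof strategy, this one needs only the linear growth and dissipativity of the tamed drift.
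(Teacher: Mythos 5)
Your proposal is correct and follows essentially the same route as the paper's proof: the identical one-step conditional expansion in the rescaled coordinates, the same use of Property 1 and Property 3 to absorb the drift terms, the same contraction factor $1-\mu\lambda/(2N^p)$ under $\lambda<N^p/(4\mu)$, and the same geometric-series unrolling yielding the constant $\mathbb{E}[|Z_0^{\lambda,u}|^2]+\tfrac{4}{\mu}(b+1)$. No substantive differences.
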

\begin{proof} Consider the rescaled iterates as described in equation \eqref{rescale1}, which expands as
    \begin{gather*}
\left|Z_{n+1}^{\lambda,u}\right|^2=\left|\theta_{n+1}^{\lambda,u}\right|^2+\dfrac{1}{N}\sum_{i=1}^N \left|X_{n+1}^{i,\lambda,u}\right|^2\\ 
=\left|\theta_{n}^{\lambda,u}-\dfrac{\lambda}{N^{p+1}}\sum_{i=1}^N h^{\theta}_{\lambda,u}(\theta_{n}^{\lambda,u},X_{n}^{i,\lambda,u})\right|^2+\dfrac{2\lambda}{N^{p+1}}|\xi^{(0)}_{n+1}|^2\\
+2\sqrt{\dfrac{2\lambda}{N^{p+1}}}\left\langle \theta_{n}^{\lambda,u}-\dfrac{\lambda}{N^{p+1}}\sum_{i=1}^N h^{\theta}_{\lambda,u}(\theta_{n}^{\lambda,u},X_{n}^{i,\lambda,u}),\xi_{n+1}^{(0)}\right\rangle\\
+\dfrac{1}{N}\sum_{i=1}^N\left(\left|X_{n}^{i,\lambda,u}-\dfrac{\lambda}{N^p} h^{x}_{\lambda,u}(\theta_{n}^{\lambda,u},X_{n}^{i,\lambda,u})\right|^2+\dfrac{2\lambda}{N^p}|\xi^{(i)}_{n+1}|^2\right.\\
+\left.2\sqrt{\dfrac{2\lambda}{N^p}}\left\langle X_{n}^{i,\lambda,u}-\dfrac{\lambda}{N^p} h^{x}_{\lambda,u}(\theta_{n}^{\lambda,u},X_{n}^{i,\lambda,u}),\xi_{n+1}^{(i)}\right\rangle\right).
\end{gather*}
Taking the conditional expectation of both sides with respect to the filtration generated by $Z_{n}^{\lambda,u}$ the cross terms vanish to $0$ due to the independence between the $\xi^{(i)}_{n+1}$'s and $Z_{n}^{\lambda,u}$. This yields
\begin{align*}
\mathbb{E}\left[\left|Z_{n+1}^{\lambda,u}\right|^2|Z_{n}^{\lambda,u}\right]&=\mathbb{E}\left[\left|\theta_{n}^{\lambda,u}\right|^2|Z_{n}^{\lambda,u}\right]-\dfrac{2\lambda}{N^{p+1}}\sum_{i=1}^N\mathbb{E}\left[\langle \theta_n^{\lambda,u},h_{\lambda,u}^{\theta}(\theta_{n}^{\lambda,u},X_{n}^{i,\lambda,u})\rangle |Z_{n}^{\lambda,u}\right]\\&+\dfrac{\lambda^2}{N^{2(p+1)}}\mathbb{E}\left[\left|\sum_{i=1}^Nh_{\lambda,u}^{\theta}(\theta_{n}^{\lambda,u},X_{n}^{i,\lambda,u})\right|^2|Z_{n}^{\lambda,u}\right]+\dfrac{2\lambda d^{\theta}}{N^{p+1}}\\
&+\dfrac{1}{N}\sum_{i=1}^N\mathbb{E}\left[\left|X_{n}^{i,\lambda,u}\right|^2|Z_{n}^{\lambda,u}\right]\nonumber\\
&-\dfrac{2\lambda}{N^{p+1}}\sum_{i=1}^N\mathbb{E}\left[\langle X_n^{i,\lambda},h_{\lambda,u}^{x}(\theta_{n}^{\lambda,u},X_{n}^{i,\lambda,u})\rangle|Z_{n}^{\lambda,u}\right]\\
&+\dfrac{\lambda^2}{N^{2p+1}}\sum_{i=1}^N\mathbb{E}\left[|h_{\lambda,u}^x(\theta_{n}^{\lambda,u},X_{n}^{i,\lambda,u})|^2|Z_{n}^{\lambda,u}\right]+\dfrac{2\lambda d^x}{N^p}.
\end{align*}
Next, using the elementary inequality $(t_1+\ldots+t_m)^p\leq m^{p-1}(t_1^p+\ldots,t_m^p)$ along with the fact that all of the terms inside the conditional expectations are measurable, we simplify the expression to
\begin{align*}
\mathbb{E}\left[\left|Z_{n+1}^{\lambda,u}\right|^2|Z_{n}^{\lambda,u}\right]&\leq |\theta_{n}^{\lambda,u}|+\dfrac{1}{N}\sum_{i=1}^N|X_n^{i,\lambda,u}|^2+\dfrac{2\lambda d^{\theta}}{N^{p+1}}++\dfrac{2\lambda d^x}{N^p}\\&-\dfrac{2\lambda}{N^{p+1}}\sum_{i=1}^N\langle \theta_n^{\lambda},h_{\lambda,u}^{\theta}(\theta_{n}^{\lambda,u},X_{n}^{i,\lambda,u})\rangle\\
&-\dfrac{2\lambda}{N^{p+1}}\sum_{i=1}^N\langle X_n^{i,\lambda,u},h_{\lambda,u}^{x}(\theta_{n}^{\lambda,u},X_{n}^{i,\lambda,u}) \rangle \\&+\dfrac{\lambda^2}{N^{2p+1}}\sum_{i=1}^N|h_\lambda^{\theta}(\theta_{n}^{\lambda,u},X_{n}^{i,\lambda,u})|^2 +\dfrac{\lambda^2}{N^{2p+1}}\sum_{i=1}^N|h_{\lambda,u}^{x}(\theta_{n}^{\lambda,u},X_{n}^{i,\lambda,u})|^2.
\end{align*}
Recalling that the two drift terms can be written together as $(h^{\theta}_{\lambda,u},h^x_{\lambda,u})=h_{\lambda,u}$, leads to
\begin{align*}
\mathbb{E}\left[\left|Z_{n+1}^{\lambda,u}\right|^2|Z_{n}^{\lambda,u}\right]&\leq \left|Z_{n}^{\lambda,u}\right|^2-\dfrac{2\lambda}{N^{p+1}}\sum_{i=1}^N\langle V_n^{i,\lambda,u},h_{\lambda,u}(V_{n}^{i,\lambda,u})\rangle\\
&+\dfrac{\lambda^2}{N^{2p+1}}\sum_{i=1}^N|h_{\lambda,u}(V_{n}^{i,\lambda,u})|^2+\dfrac{2\lambda}{N^{p}}(d^{\theta}/N+d^x).
\end{align*}
Now, by applying Properties \hyperlink{Property1}{1} and \hyperlink{Property3}{3} of the taming function, we simplify further
\begin{align*}
\mathbb{E}\left[\left|Z_{n+1}^{\lambda,u}\right|^2|Z_{n}^{\lambda,u}\right]&\leq \left|Z_{n}^{\lambda,u}\right|^2-\dfrac{\mu\lambda}{N^{p+1}}\sum_{i=1}^N\left|V_{n}^{i,\lambda,u}\right|^2+\dfrac{2\lambda b}{N^{p}}\\&+\dfrac{2\lambda^2\mu^2}{N^{2p+1}}\sum_{i=1}^N \left|V_{n}^{i,\lambda,u}\right|^2+\dfrac{2\lambda^2\lambda^{-1}N^{p}}{N^{2p}}+\dfrac{2\lambda}{N^{p}}(d^{\theta}/N+d^x)\\
&\leq\left(1-\dfrac{\lambda\mu}{N^p}+\dfrac{2\lambda^2\mu^2}{N^{2p}}\right)\left|Z_{n}^{\lambda,u}\right|^2+\dfrac{2\lambda}{N^p}\left(b+1+d^{\theta}/N+d^x\right).
\end{align*}
Iterating this inequality, and assuming the restriction $\lambda<N^p/(4\mu)$, yields the desired bound
\begin{align*}
\mathbb{E}\left[\left|Z_{n+1}^{\lambda,u}\right|^2\right]&\leq \left(1-\dfrac{\mu\lambda}{2N^p}\right)^n\mathbb{E}\left[\left|Z_{0}^{\lambda,u}\right|^2\right]+\dfrac{1-(1-\mu\lambda/2)^n}{\mu\lambda/2N^p}\dfrac{2\lambda}{N^p}(b+1+d^{\theta}/N+d^x)\\ &\leq \mathbb{E}\left[\left|Z_{0}^{\lambda,u}\right|^2\right]+\dfrac{4}{\mu}(b+1+d^{\theta}/N+d^x)\\
&\leq \left(\mathbb{E}\left[\left|Z_{0}^{\lambda,u}\right|^2\right]+\dfrac{4}{\mu}(b+1)\right)(1+d^{\theta}/N+d^x)\\
&\leq C_{|z_0|,\mu,b}(1+d^{\theta}/N+d^x).
\end{align*}\end{proof}
\noindent \begin{lemma}\hypertarget{lemma2}{}\label{lemma2} Let \hyperlink{Assum1}{A1}, \hyperlink{Assum2}{A2} and \hyperlink{Assum4}{A4} hold. Then, for any $0\leq\lambda<N^p/(4\mu)$ and $q\in[2,2\ell+1)\cap\mathbb{N}$, it holds that,
$$\mathbb{E}\left[|Z^{\lambda,u}_{n}|^{2q}\right]\leq C_{|z_0|,b,q,\mu}(1+d^{\theta}/N+d^x)^q,$$
for a constant $C>0$, independent of $N,n,\lambda, d^x \text{and } d^{\theta}$, given explicitly within the proof.\end{lemma}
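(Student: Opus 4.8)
The plan is to prove the estimate by induction on $q$, using Lemma \ref{lemma1} as the base case $q=1$ (the case $q=0$ being trivial) and reproducing, at the level of $|Z_{n+1}^{\lambda,u}|^{2q}$, the one-step analysis already carried out in the proof of Lemma \ref{lemma1}. Fix $q\in[2,2\ell+1)\cap\mathbb{N}$ and assume the asserted bound for every exponent in $\{1,\dots,q-1\}$. Writing $|Z_{n+1}^{\lambda,u}|^2=|Z_n^{\lambda,u}|^2+\Delta_n$, where $\Delta_n$ is the sum of the drift term $-\tfrac{2\lambda}{N^{p+1}}\sum_{i=1}^N\langle V_n^{i,\lambda,u},h_{\lambda,u}(V_n^{i,\lambda,u})\rangle$, the squared-increment term $\tfrac{\lambda^2}{N^{2p+1}}\sum_{i=1}^N|h_{\lambda,u}(V_n^{i,\lambda,u})|^2$, the Gaussian quadratic terms, and the martingale cross terms — exactly the quantities handled in the proof of Lemma \ref{lemma1} — I would expand
\[
|Z_{n+1}^{\lambda,u}|^{2q}=\sum_{k=0}^{q}\binom{q}{k}|Z_n^{\lambda,u}|^{2(q-k)}\Delta_n^{k}
\]
and take the conditional expectation given the filtration generated by $Z_n^{\lambda,u}$.

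The $k=0$ term is $|Z_n^{\lambda,u}|^{2q}$. For $k=1$ one obtains $q\,|Z_n^{\lambda,u}|^{2(q-1)}\,\mathbb{E}[\Delta_n\mid Z_n^{\lambda,u}]$, and the conditional expectation is controlled precisely as in Lemma \ref{lemma1}: Property 3 produces the negative drift $-\tfrac{q\mu\lambda}{N^p}|Z_n^{\lambda,u}|^{2q}$, Property 1 contributes a remainder $\tfrac{2q\lambda^2\mu^2}{N^{2p}}|Z_n^{\lambda,u}|^{2q}$ which is absorbed into half of this negative drift by the constraint $\lambda<N^p/(4\mu)$, and the Gaussian and constant part $\tfrac{2q\lambda}{N^p}(b+1+d^\theta/N+d^x)|Z_n^{\lambda,u}|^{2(q-1)}$ is split by Young's inequality into a small multiple of $\tfrac{\lambda}{N^p}|Z_n^{\lambda,u}|^{2q}$ plus $\tfrac{\lambda}{N^p}C_q(1+d^\theta/N+d^x)^q$. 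For $k\geq 2$ I would expand $\Delta_n^{k}$ multinomially: odd powers of the martingale cross terms vanish under the conditional expectation, each surviving factor of $\Delta_n$ carries a power $\lambda^{1/2}N^{-p/2}$ (modulo a polynomial in $|Z_n^{\lambda,u}|$ and the Gaussian variables), the per-particle norms are replaced by $|Z_n^{\lambda,u}|$ through $|V_n^{i,\lambda,u}|^2\leq (N+1)|Z_n^{\lambda,u}|^2$ and through $\tfrac1N\sum_i|h_{\lambda,u}(V_n^{i,\lambda,u})|^2\leq 2\mu^2|Z_n^{\lambda,u}|^2+2\lambda^{-1}N^p$ — so that the powers of $N$ accrued from summation over the $N$ particles are cancelled by the $N^{-p}$ prefactors, which is precisely the role of the $N^p$ time-scaling in \eqref{eq:02}--\eqref{eq:03} — and the Gaussian moment bounds $\mathbb{E}[|\xi^{(i)}|^{2m}]\leq C_m(1+d^x)^m$ supply the dimensional factors. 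Every resulting contribution is then at most $\tfrac{\lambda}{N^p}$ times a monomial $|Z_n^{\lambda,u}|^{2r}(1+d^\theta/N+d^x)^{q-r}$ with $r<q$ (up to further powers of $\lambda/N^p$), and a final application of Young's inequality to each such monomial transfers a controllably small fraction onto $|Z_n^{\lambda,u}|^{2q}$, leaving genuinely lower-order moments together with admissible constants.

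Collecting everything, the surviving $|Z_n^{\lambda,u}|^{2q}$-terms are dominated by the $k=1$ negative drift, so that for some $c\in(0,1)$
\[
\mathbb{E}\big[|Z_{n+1}^{\lambda,u}|^{2q}\mid Z_n^{\lambda,u}\big]\leq \Big(1-\tfrac{c\mu\lambda}{N^p}\Big)|Z_n^{\lambda,u}|^{2q}+\tfrac{\lambda}{N^p}\,C_{q,\mu,b}\,(1+d^\theta/N+d^x)^{q}+\tfrac{\lambda}{N^p}\sum_{r=1}^{q-1}C_{q,\mu,b}\,|Z_n^{\lambda,u}|^{2r}.
\]
Taking full expectations, bounding each $\mathbb{E}[|Z_n^{\lambda,u}|^{2r}]$ for $r\le q-1$ by $C_r(1+d^\theta/N+d^x)^{r}\leq C_r(1+d^\theta/N+d^x)^{q}$ via the induction hypothesis (and Lemma \ref{lemma1} for $r=1$), and iterating the scalar recursion $u_{n+1}\leq(1-c\mu\lambda/N^p)u_n+(\lambda/N^p)\widetilde{C}(1+d^\theta/N+d^x)^q$, whose invariant level is $(\widetilde{C}/(c\mu))(1+d^\theta/N+d^x)^q$, one obtains $\mathbb{E}[|Z_n^{\lambda,u}|^{2q}]\leq(\mathbb{E}[|Z_0^{\lambda,u}|^{2q}]+\widetilde{C}/(c\mu))(1+d^\theta/N+d^x)^q$; this is finite because A4 with $p_0=2(\ell+1)$ gives $\mathbb{E}[|Z_0^{\lambda,u}|^{2q}]<\infty$ whenever $2q<4\ell+2<2p_0$, which is exactly the role of the restriction $q<2\ell+1$. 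The main obstacle is the combinatorial bookkeeping for the terms with $k\geq 2$: one must verify that, after the multinomial expansion of $\Delta_n^k$, the conversions $|V_n^{i,\lambda,u}|^2\le(N+1)|Z_n^{\lambda,u}|^2$ and $\tfrac1N\sum_i|h_{\lambda,u}(V_n^{i,\lambda,u})|^2\le 2\mu^2|Z_n^{\lambda,u}|^2+2\lambda^{-1}N^p$, and the accounting of the accumulated powers of $\lambda N^{-p}$, $\lambda N^{-(p+1)}$ and $\lambda^{1/2}N^{-p/2}$, no monomial of order $|Z_n^{\lambda,u}|^{2q}$ survives with a coefficient comparable to $\lambda/N^p$ (otherwise the geometric contraction is lost), and that all dimension dependence is packaged into a power of $(1+d^\theta/N+d^x)$ of order at most $q$.
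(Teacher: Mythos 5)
Your overall strategy -- a one-step conditional moment recursion, separate treatment of drift, martingale and Gaussian-quadratic contributions, absorption of lower-order monomials, and iteration of the resulting scalar inequality -- is the same as the paper's, and your handling of the lower-order terms (Young's inequality plus induction on $q$, with \hyperlink{Assum4}{A4} and the restriction $q<2\ell+1$ correctly invoked for the initial moment) is a legitimate alternative to the paper's case split on the events $\{|Z_n^{\lambda,u}|\ \text{large}\}$ and $\{|Z_n^{\lambda,u}|\ \text{small}\}$. However, the specific decomposition you choose, $|Z_{n+1}^{\lambda,u}|^{2q}=\big(|Z_n^{\lambda,u}|^2+\Delta_n\big)^q$ with the drift inner product and the squared drift placed inside $\Delta_n$, creates exactly the problem you flag at the end and do not resolve: for $k\ge 2$ the term $\binom{q}{k}|Z_n^{\lambda,u}|^{2(q-k)}\Delta_n^k$ contains, via the pure drift part of $\Delta_n$ (which is pointwise of size $O\big(\tfrac{\lambda\mu}{N^p}|Z_n^{\lambda,u}|^2\big)$ by Property \hyperlink{Property1}{1}), a positive contribution of top order $|Z_n^{\lambda,u}|^{2q}$ with coefficient roughly $\binom{q}{k}\big(\tfrac{\lambda\mu}{N^p}\big)^k$. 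Summing over $k\ge 2$ under the stated constraint $\lambda\mu/N^p<1/4$ gives an aggregate coefficient of order $\tfrac{\lambda\mu}{N^p}\big[(5/4)^q-1-q/4\big]$, which grows geometrically in $q$ while the negative drift from the $k=1$ term only supplies $-q\mu\lambda/N^p$. For $q$ around $9$ or larger (i.e.\ $\ell\ge 4$) the naive bookkeeping therefore loses the contraction, so the claim that ``the surviving $|Z_n^{\lambda,u}|^{2q}$-terms are dominated by the $k=1$ negative drift'' is not something you can obtain by this route without either a $q$-dependent stepsize restriction or a regrouping of terms.

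The paper's proof avoids this by a different split of the same quantity: it sets $A_n^{\lambda}=|\Delta_n^{\lambda,\theta}|^2+\tfrac1N\sum_i|\Delta_n^{\lambda,x,i}|^2$ (the squared norm of the \emph{full} deterministic update, drift included) and $B_n^{\lambda}$ (everything involving the Gaussians), so that $|Z_{n+1}^{\lambda,u}|^{2q}=(A_n^{\lambda}+B_n^{\lambda})^q$. The one-step computation of Lemma \ref{lemma1} already shows $A_n^{\lambda}\le(1-\lambda\mu/(2N^p))|Z_n^{\lambda,u}|^2+2\lambda C/N^p$, hence $(A_n^{\lambda})^q$ still carries a contraction factor $\le 1-\lambda\mu/(2N^p)$ after an elementary inequality with $\epsilon=\lambda\mu/4$; and crucially, every factor of $B_n^{\lambda}$ in the terms with $k\ge 1$ lowers the degree in $|Z_n^{\lambda,u}|$ by at least two while contributing a factor $O(\lambda d)$, so no positive $|Z_n^{\lambda,u}|^{2q}$ term of order $\lambda/N^p$ ever appears outside $(A_n^{\lambda})^q$. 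If you regroup your $\Delta_n$ into its deterministic part (absorbed into $|Z_n^{\lambda,u}|^2$ to form $A_n^{\lambda}$) and its stochastic part $B_n^{\lambda}$ before expanding, your argument becomes the paper's and the gap closes; as written, the key domination step is asserted but not established.
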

\begin{proof}
To make the following calculations clearer, we define the auxiliary processes
\begin{gather*}
\Delta_n^{\lambda,\theta}=\theta_{n}^{\lambda,u}-\dfrac{\lambda}{N^{p+1}}\sum_{i=1}^N h^{\theta}_{\lambda,u}\left(\theta_n^{\lambda},X^{i,\lambda}_n\right), \ G_n^{\lambda,\theta}=\sqrt{\dfrac{2\lambda}{N^{p+1}}}\xi^{(0)}_{n+1},\\
\Delta_n^{\lambda,x,i}=X_{n}^{i,\lambda,u}-\dfrac{\lambda}{N^p} h^{x}_{\lambda,u}\left(\theta_n^{\lambda},X^{i,\lambda}_n\right), \ G_n^{\lambda,x,i}=\sqrt{\dfrac{2\lambda}{N^p}}\xi^{(i)}_{n+1}.
\end{gather*}
For the sake of simplicity in the calculations, we denote $\lambda/N^p$ as $\lambda$ from this point onwards. Recall from Lemma \hyperlink{lemma1}{2} that we have the bound
\begin{gather*}
A_n^{\lambda}:=|\Delta_n^{\lambda,\theta}|^2+\dfrac{1}{N}\sum_{i=1}^N|\Delta_n^{\lambda,i,x}|^2\leq \left(1-\dfrac{\lambda\mu}{2}\right)|Z_n^{\lambda,u}|^2+2\lambda C.
\end{gather*}
Now, we define the auxiliary quantity
\begin{gather*}
B^{\lambda}_n=2\langle \Delta_n^{\theta},G_n^{\lambda,\theta}\rangle+\dfrac{2}{N}\sum_{i=1}^N\langle \Delta_n^{\lambda,x,i},G_n^{\lambda,x,i}\rangle+|G_n^{\lambda,\theta}|^2+\dfrac{1}{N}\sum_{i=1}^N|G_n^{\lambda,x,i}|^2.
\end{gather*}
For the $2q$-th moment, we can write
\begin{align}
|Z_{n+1}^{\lambda,u}|^{2q}&=\left(A_n^{\lambda}+B^{\lambda}_n\right)^q\nonumber \\
&\leq (A_n^{\lambda})^q+2q(A_n^{\lambda})^{q-1}B^{\lambda}_n+\sum_{k=2}^q\binom{q}{k}|A_n^{\lambda}|^{q-k}|B^{\lambda}_n|^k. \label{eq:010}
\end{align}
We will now handle each term separately
\begin{align}
\mathbb{E}\left[(A_n^{\lambda})^q|Z_n^{\lambda,u}\right]&=(A_n^{\lambda})^{q}\leq \left(\left(1-\dfrac{\lambda\mu}{2}\right)|Z_n^{\lambda,u}|^2+2\lambda C\right)^q\nonumber\\
&\leq \left(1+\dfrac{\lambda \mu}{4}\right)^{q-1}\left(1-\dfrac{\lambda\mu}{2}\right)^q|Z_n^{\lambda,u}|^{2q}+\left(1+\dfrac{4}{\lambda\mu }\right)^{q-1}2^q{\lambda}^q C^q\nonumber\\
&\leq \left(1-\dfrac{\lambda\mu}{4}\right)^{q-1}\left(1-\dfrac{\lambda\mu}{2}\right)|Z_n^{\lambda,u}|^{2q}+\left(\lambda+\dfrac{4}{\mu }\right)^{q-1}\lambda (2C)^q\nonumber\\
&\leq r_q^{\lambda}|Z_n^{\lambda,u}|^{2q}+w_q^{\lambda},\label{eq:A1}
\end{align}
where $r_q^{\lambda}=(1-\lambda\mu/4)^{q-1}(1-\lambda\mu/2)$ and $w_q^{\lambda}=(\lambda+4/\mu)^{q-1}\lambda (2C)^q$. Notice that we used the elementary equation $(r+s)^q\leq(1+\epsilon)^{q-1}r^q+(1+1/\epsilon)^{q-1}s^q$ with $\epsilon=\lambda\mu/4$. Additionally for simplicity, we denote $d=d^{\theta}/N+d^x$. Now, similarly
\begin{align}
\mathbb{E}\left[2q(A_n^{\lambda})^{q-1}B^{\lambda}_n|Z_n^{\lambda,u}\right]&=2q(A_n^{\lambda})^{q-1}\mathbb{E}\left[B^{\lambda}_n|Z_n^{\lambda,u}\right]=4q\lambda\left(d^{\theta}/N+d^x\right)(A_n^{\lambda})^{q-1}\nonumber\\
&\leq 4q\lambda\left(d^{\theta}/N+d^x\right) \left(r_{q-1}^{\lambda}|Z_n^{\lambda,u}|^{2(q-1)}+w_{q-1}^{\lambda}\right)\nonumber\\
&\leq  4q\lambda d \left(r_{q-1}^{\lambda}|Z_n^{\lambda,u}|^{2(q-1)}+w_{q-1}^{\lambda}\right).\label{eq:A2}
\end{align}
The third term in equation \eqref{eq:010} can be expanded to
\begin{align}
\sum_{k=2}^q\binom{q}{k}|A_n^{\lambda}|^{q-k}|B^{\lambda}_n|^k&=\sum_{m=0}^{q-2}\binom{q}{m+2}|A_n^{\lambda}|^{q-2-m}|B^{\lambda}_n|^{m+2}\nonumber\\
&=\dfrac{q}{m+2}\dfrac{q-1}{m+1}\sum_{m=0}^{q-2}\binom{q-2}{m}|A_n^{\lambda}|^{q-2-m}|B^{\lambda}_n|^{m}|B^{\lambda}_n|^{2}\nonumber\\
&\leq q(q-1)\left(|A_n^{\lambda}|+|B_n^{\lambda}|\right)^{q-2}|B^{\lambda}_n|^2\nonumber\\
&\leq q(q-1)2^{q-3}|A_n^{\lambda}|^{q-2}|B_n^{\lambda}|^2+q(q-1)2^{q-3}|B_n^{\lambda}|^q\nonumber\\
&:=D+F.\label{eq:A3}
\end{align}
Taking the expectation in equation \eqref{eq:A3} yields
\begin{align*}
\mathbb{E}[D|Z_n^{\lambda,u}]&=q(q-1)2^{q-3}|A_n^{\lambda}|^{q-2}\mathbb{E}\left[|B_n^{\lambda}|^2|Z_n^{\lambda,u}\right],
\end{align*}
where
\begin{align*}
&\mathbb{E}\left[|B_n^{\lambda}|^2|Z_n^{\lambda,u}\right]\\&=\mathbb{E}\left[{\left|2\langle{ \Delta_n^{\theta},G_n^{\lambda,\theta}}\rangle+\dfrac{2}{N}\sum_{i=1}^N\langle{ \Delta_n^{\lambda,x,i},G_n^{\lambda,x,i}}\rangle+{\left|G_n^{\lambda,\theta}\right|}^2+\dfrac{1}{N}\sum_{i=1}^N{\left|G_n^{\lambda,x,i}\right|}^2\right|}^2|Z_n^{\lambda,u}\right]\\
&\leq 4\left(\mathbb{E}\left[4|\Delta_n^{\lambda,\theta}|^2|G_n^{\lambda,\theta}|^2|Z_n^{\lambda,u}\right]+\mathbb{E}\left[\dfrac{4}{N}\sum_{i=1}^N|\Delta_n^{\lambda,x,i}|^2|G_n^{\lambda,x,i}|^2|Z_n^{\lambda,u}\right]\right.\\
&+\left.\mathbb{E}\left[|G_n^{\lambda,x,i}|^4|Z_n^{\lambda,u}\right]+\mathbb{E}\left[\dfrac{1}{N}\sum_{i=1}^N|G_n^{\lambda,x,i}|^4|Z_n^{\lambda,u}\right]\right)\\
&\leq 4\left(4|\Delta_n^{\lambda,\theta}|^2\mathbb{E}\left[|G_n^{\lambda,\theta}|^2|Z_n^{\lambda,u}\right]+\dfrac{4}{N}\sum_{i=1}^N|\Delta_n^{\lambda,x,i}|^2\mathbb{E}\left[|G_n^{\lambda,x,i}|^2|Z_n^{\lambda,u}\right]\right.\\
&+\left.\mathbb{E}\left[|G_n^{\lambda,x,i}|^4|Z_n^{\lambda,u}\right]+\mathbb{E}\left[\dfrac{1}{N}\sum_{i=1}^N|G_n^{\lambda,x,i}|^4|Z_n^{\lambda,u}\right]\right).
\end{align*}
Recalling that each $G_n^{\lambda,\cdot,i}$ follows a Gaussian distribution, we subsequently derive
\begin{align*}
&\mathbb{E}\left[|B_n^{\lambda}|^2|Z_n^{\lambda,u}\right]\\&\leq 4\left(4(2\lambda d^{\theta}/N)|\Delta_n^{\lambda,\theta}|^2+\dfrac{4}{N}\sum_{i=1}^N(2\lambda d^x)|\Delta_n^{\lambda,x,i}|^2+3(2\lambda d^{\theta}/N)^2+3(2\lambda d^x)^2\right)\\
&\leq 16(2\lambda d)\left(|\Delta_n^{\lambda,\theta}|^2+\dfrac{1}{N}\sum_{i=1}^N|\Delta_n^{\lambda,x,i}|^2\right)+24(2\lambda d)^2\\
&\leq  16(2\lambda d)A_n^{\lambda}+24(2\lambda d)^2.
\end{align*}
Substituting this into the term for $D$, we get
\begin{align}
\mathbb{E}[D|Z_n^{\lambda,u}]&=q(q-1)2^{q-3}|A_n^{\lambda}|^{q-2}\left(16(2\lambda d)A_n^{\lambda}+24(2\lambda d)^2\right)\nonumber\\
&=16(2\lambda d)q(q-1)2^{q-3} \left(r_{q-1}^{\lambda}|Z_n^{\lambda,u}|^{2(q-1)}+w_{q-1}^{\lambda}\right)\nonumber\\&+24(2\lambda d)^2q(q-1)2^{q-3} \left(r_{q-2}^{\lambda}|Z_n^{\lambda,u}|^{2(q-2)}+w_{q-2}^{\lambda}\right).\label{eq:A4}
\end{align}
Additionally
\begin{align}
\mathbb{E}[F|Z_n^{\lambda,u}]&=q(q-1)2^{q-3}|B_n^{\lambda}|^q\notag\\
&=q(q-1)2^{q-3}\mathbb{E}\left[\left|2\langle \Delta_n^{\theta},G_n^{\lambda,\theta}\rangle+\dfrac{2}{N}\sum_{i=1}^N\langle \Delta_n^{\lambda,x,i},G_n^{\lambda,x,i}\rangle\right.\right.\notag\\
&\left.\left.+|G_n^{\lambda,\theta}|^2+\dfrac{1}{N}\sum_{i=1}^N|G_n^{\lambda,x,i}|^2\right|^q|Z_n^{\lambda,u} \right]\notag\\
&\leq q(q-1)2^{q-3}4^{q-1}\left(\mathbb{E}\left[2^q|\Delta_n^{\lambda,\theta}|^q|G_n^{\lambda,\theta}|^q|Z_n^{\lambda,u}\right]\right.\notag\\
&+\mathbb{E}\left[\dfrac{2^q}{N^q} \left|\sum_{i=1}^N |\Delta_n^{\lambda,x,i}||G_n^{\lambda,x,i}|\right|^q|Z_n^{\lambda,u}\right]+\mathbb{E}\left[|G_n^{\lambda,\theta}|^{2q}|Z_n^{\lambda,u}\right]\notag\\
&\left.+\mathbb{E}\left[\dfrac{1}{N^q}N^{q-1}\sum_{i=1}^N|G_n^{\lambda,x,i}|^{2q}|Z_n^{\lambda,u}\right]\right).\label{eq:011}
\end{align}
We handle the second term in equation \eqref{eq:011} by applying the multinomial expansion
$$\left|\sum_{i=1}^N |\Delta_n^{\lambda,x,i}||G_n^{\lambda,x,i}|\right|^q=\sum_{k_1+\ldots+k_N=q}\binom{q}{k_1,\ldots,k_N}\prod_{i=1}^N\left(|\Delta_n^{\lambda,x,i}||G_n^{\lambda,x,i}|\right)^{k_i},$$
and using the fact that $\Delta_n^{\lambda,x,i}$ are $Z_n^{\lambda,u}-$measurable, we obtain
\begin{align*}
&\dfrac{2^q}{N^q}\mathbb{E}\left[\left|\sum_{i=1}^N |\Delta_n^{\lambda,x,i}|G_n^{\lambda,x,i}|\right|^q|Z_n^{\lambda,u}\right]\\
&= \dfrac{2^q}{N^q}\mathbb{E}\left[\sum_{k_1+\ldots+k_N=q}\binom{q}{k_1,\ldots,k_N}\prod_{i=1}^N\left(|\Delta_n^{\lambda,x,i}||G_n^{\lambda,x,i}|\right)^{k_i}|Z_n^{\lambda,u}\right]\\
&=\dfrac{2^q}{N^q}\sum_{k_1+\ldots+k_N=q}\binom{q}{k_1,\ldots,k_N}\prod_{i=1}^N |\Delta_n^{\lambda,x,i}|^{k_i}\prod_{i=1}^N\mathbb{E}\left[|G_n^{\lambda,x,i}|^{k_i}|Z_n^{\lambda,u}\right]\\
&=\dfrac{2^q}{N^q}\sum_{k_1+\ldots+k_N=q}\binom{q}{k_1,\ldots,k_N}\prod_{i=1}^N |\Delta_n^{\lambda,x,i}|^{k_i}(2\lambda d^x)^{\sum_{i=1}^Nk_i/2}k_i!!\\
&\leq \dfrac{2^q q!!(2\lambda d^x)^{q/2}}{N^q}\sum_{k_1+\ldots+k_N=q}\binom{q}{k_1,\ldots,k_N}\prod_{i=1}^N |\Delta_n^{\lambda,x,i}|^{k_i}\\
&=\dfrac{2^q q!!(2\lambda d^x)^{q/2}}{N^q}\left(\sum_{i=1}^N |\Delta_n^{\lambda,x,i}|\right)^q\leq \dfrac{2^q q!!(2\lambda d^x)^{q/2}}{N^{q-1}}\left(\sum_{i=1}^N |\Delta_n^{\lambda,x,i}|^2\right)^{q/2}\\
&\leq 2^q q!!(2\lambda d^x)^{q/2}\left(\dfrac{1}{N}\sum_{i=1}^N |\Delta_n^{\lambda,x,i}|^2\right)^{q/2}.
\end{align*}Substituting these results into inequality \eqref{eq:011}, we derive
\begin{align}
\mathbb{E}[F|Z_n^{\lambda,u}]&\leq q(q-1)2^{3q-5}\left(2^qq!!(2\lambda d^{\theta}/N)^{q/2}\left(|\Delta_n^{\lambda,\theta}|^2\right)^{q/2}\right.\nonumber\\
&\left.+2^q q!!(2\lambda d^x)^{q/2}\left(\dfrac{1}{N}\sum_{i=1}^N |\Delta_n^{\lambda,x,i}|^2\right)^{q/2}+(2q)!!(2\lambda d^{\theta}/N)^{q}+(2q)!!(2\lambda d^{x})^{q}\right)\nonumber\\
&\leq q(q-1)2^{3q-5}\left(2^qq!!(2\lambda d)^{q/2}\left(|\Delta_n^{\lambda,\theta}|^2+\dfrac{1}{N}\sum_{i=1}^N |\Delta_n^{\lambda,x,i}|^2\right)^{q/2}+2(2q)!!(2\lambda d)^q\right)\nonumber\\
&\leq q(q-1)2^{4q-5}(2q)!!\left((2\lambda d)^{q/2} \left|A_n^{\lambda}\right|^{q/2}+(2\lambda d)^q\right)\nonumber\\
&\leq q(q-1)2^{4q-5}(2q)!!(2\lambda d)^{q/2}\left(r^{\lambda}_{q/2}|Z_n^{\lambda,u}|^q+w^{\lambda}_{q/2}\right)+q(q-1)2^{4q-5}(2q)!!(2\lambda d)^q.\label{eq:A5}
\end{align}
Combining the bounds in \eqref{eq:A4}-\eqref{eq:A5} yields the following control over the third term in \eqref{eq:010}
\begin{align}
\mathbb{E}\left[\sum_{k=2}^q\binom{q}{k}|A_n^{\lambda}|^{q-k}|B^{\lambda}_n|^k\right]
&\leq 16(2\lambda d)q(q-1)2^{q-3} \left(r_{q-1}^{\lambda}|Z_n^{\lambda,u}|^{2(q-1)}+w_{q-1}^{\lambda}\right)\nonumber\\&+24(2\lambda d)^2q(q-1)2^{q-3} \left(r_{q-2}^{\lambda}|Z_n^{\lambda,u}|^{2(q-2)}+w_{q-2}^{\lambda}\right)\nonumber\\
&+ (2q)!!(2\lambda d)^{q/2}q(q-1)2^{4q-5}\left(r^{\lambda}_{q/2}|Z_n^{\lambda,u}|^q+w^{\lambda}_{q/2}\right)\nonumber\\
&+(2q)!!(2\lambda d)^qq(q-1)2^{4q-5}.\label{eq:A6}
\end{align}
Hence, we obtain the following bound for \eqref{eq:010} via \eqref{eq:A1},\eqref{eq:A2} and \eqref{eq:A6}
\begin{align}
\mathbb{E}\left[|Z_{n+1}|^{2q}|Z_n\right]&\leq r_q^{\lambda}|Z_n^{\lambda,u}|^{2q}+w_q^{\lambda}\nonumber\\
&+4q\left(2\lambda d\right)(1+4(q-1)2^{q-3}) \left(r_{q-1}^{\lambda}|Z_n^{\lambda,u}|^{2(q-1)}+w_{q-1}^{\lambda}\right)\nonumber\\
&+24(2\lambda d)^2q(q-1)2^{q-3} \left(r_{q-2}^{\lambda}|Z_n^{\lambda,u}|^{2(q-2)}+w_{q-2}^{\lambda}\right)\nonumber\\
&+(2q)!!(2\lambda d)^{q/2}q(q-1)2^{4q-5}\left(r^{\lambda}_{q/2}|Z_n^{\lambda,u}|^q+w^{\lambda}_{q/2}\right)\nonumber\\
&+(2q)!!(2\lambda d)^qq(q-1)2^{4q-5}.\label{eq:A7}
\end{align}
We consider $|Z_n^{\lambda,u}|\geq \sqrt{8d/\mu}\left\{(2q)!!q(q-1)2^{4q-5}\right\}^{1/2}\geq \sqrt{8d/\mu}\left\{(2q)!!q(q-1)2^{4q-5}\right\}^{1/q}.$ Thus we compute
\begin{align*}
\mathbb{E}\left[|Z_{n+1}^{\lambda,u}|^{2q}|Z_n\right]&\leq r_q^{\lambda}|Z_n^{\lambda,u}|^{2q}+w_q^{\lambda}\\
&+\dfrac{\lambda\mu}{2\cdot 4}|Z_n^{\lambda,u}|^2\left(r_{q-1}^{\lambda}|Z_n^{\lambda,u}|^{2(q-1)}\right)+\dfrac{(\lambda\mu)^2}{2\cdot 4^2}|Z_n^{\lambda,u}|^4\left(r_{q-2}^{\lambda}|Z_n^{\lambda,u}|^{2(q-2)}\right)\\&+\dfrac{(\lambda\mu)^{q/2}}{2\cdot 4^{q/2}}|Z_n^{\lambda,u}|^{q}\left(r^{\lambda}_{q/2}|Z_n^{\lambda,u}|^q\right)+(2q)!!q(q-1)2^{4q-5}(2\lambda d)^q\\&+(2q)!!q(q-1)2^{4q-5}\left((2\lambda d)w_{q-1}^{\lambda}+(2\lambda d)^2w_{q-2}^{\lambda}+(2\lambda d)^{q/2}w_{q/2}^{\lambda}\right)\\
&\leq r^{\lambda}_{q/2}\left(\left(1-\dfrac{\lambda\mu}{4}\right)^{q/2}+\dfrac{1}{2}\left(\dfrac{\lambda\mu}{4}\right)\left(1-\dfrac{\lambda\mu}{4}\right)^{q/2-1}\right.\\
&\left.+\dfrac{1}{2}\left(\dfrac{\lambda\mu}{4}\right)^{2}\left(1-\dfrac{\lambda\mu}{4}\right)^{q/2-2}+\dfrac{1}{2}\left(\dfrac{\lambda\mu}{4}\right)^{q/2}\right)|Z_n^{\lambda,u}|^{2q}\\
&+w_q^{\lambda}+(2q)!!q(q-1)2^{4q-5}(2\lambda d)^q\\
&+(2q)!!q(q-1)2^{4q-5}\left((2\lambda d)w_{q-1}^{\lambda}+(2\lambda d)^2w_{q-2}^{\lambda}+(2\lambda d)^{q/2}w_{q/2}^{\lambda}\right).
\end{align*}
Using the fact that $\lambda\mu\leq 1$ we get
\begin{align*}
&\mathbb{E}\left[|Z_{n+1}^{\lambda,u}|^{2q}|Z_n\right]\\&\leq \left(1-\dfrac{\lambda\mu}{2}\right)|Z_n^{\lambda,u}|^{2q}+w_q^{\lambda}+(2q)!!q(q-1)2^{4q-5}(2\lambda d)^q\\
&+(2q)!!q(q-1)2^{4q-5}\left((2\lambda d)w_{q-1}^{\lambda}+(2\lambda d)^2w_{q-2}^{\lambda}+(2\lambda d)^{q/2}w_{q/2}^{\lambda}\right)\\
&\leq   \left(1-\dfrac{\lambda\mu}{2}\right)|Z_n^{\lambda,u}|^{2q}+(2q)!!q(q-1)2^{4q-2}(8/\mu)^{q-1}\lambda(2C)^q.
\end{align*}
Consequently, on $\{|Z_n^{\lambda,u}|\leq \sqrt{8d/\mu}\left\{(2q)!!q(q-1)2^{4q-5}\right\}^{1/2}\}$, we have
\begin{align*}
\mathbb{E}\left[|Z_{n+1}^{\lambda,u}|^{2q}|Z_n\right]&\leq  \left(1-\dfrac{\lambda\mu}{2}\right)|Z_n^{\lambda,u}|^{2q}\\&+(2q)!!q(q-1)2^{4q-2}(8/\mu)^{q-1}\lambda(2C)^q\\&+\left((2q)!!q(q-1)2^{4q-2}\right)^{q/2}(8/\mu)^{q-1}\lambda(2C)^q.
\end{align*}
Thus overall, we refine the bound in \eqref{eq:A7} to
\begin{align*}
\mathbb{E}\left[|Z_{n+1}^{\lambda,u}|^{2q}\right]&\leq  \left(1-\dfrac{\lambda\mu}{2}\right)\mathbb{E}\left[|Z_n^{\lambda,u}|^{2q}\right] +M_q(8/\mu)^{q} C^q\\
&\leq C_{|z_0|,b,q,\mu}(1+d^{\theta}/N+d^x)^q,
\end{align*}
where $C_{|z_0|,b,q,\mu}=\mathbb{E}\left[|Z_{0}^{\lambda}|^{2q}\right]M_q(8/\mu)^q(b+1)^{q}$ and $M_q=(2q)!!q(q-1)2^{6q-5}$.
\end{proof}
\noindent \begin{lemma} Let \hyperlink{Assum1}{A1}, \hyperlink{Assum2}{A2} and \hyperlink{Assum4}{A4} hold. Then, for every $\lambda_0<N^{p}/(4\mu)$, there exists a constant $C>0$, independent of $N,n,\lambda,d^x \text{and }d^{\theta}$, such that for any $\lambda\in(0,\lambda_0)$ one has
$$\mathbb{E}\left[|Z^{\lambda,u}_{n+1}-Z^{\lambda,u}_{n}|^4\right]\leq \lambda^2N^{-2p} C_{|z_0|,\mu,b}(1+d^{\theta}/N+d^x)^2,$$where $C$ is given explicitly within the proof.\end{lemma}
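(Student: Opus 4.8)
The plan is to expand one step of the rescaled \hyperref[algo1]{tIPLAu} recursion. Writing the increment in the coordinates of \eqref{rescale1} as $Z^{\lambda,u}_{n+1}-Z^{\lambda,u}_{n}=-D_n+G_n$, where $D_n$ collects the tamed drift (its $\theta$-block is $\tfrac{\lambda}{N^{p+1}}\sum_{i=1}^N h^{\theta}_{\lambda,u}(V^{i,\lambda,u}_{n+1})$ and its $i$-th $x$-block is $\tfrac{\lambda}{N^{p+1/2}}h^{x}_{\lambda,u}(V^{i,\lambda,u}_{n+1})$) and $G_n$ collects the Gaussian increments $\sqrt{2\lambda/N^{p+1}}\,\xi^{(0)}_{n+2}$, $\sqrt{2\lambda/N^{p+1}}\,\xi^{(i)}_{n+2}$, one has $|Z^{\lambda,u}_{n+1}-Z^{\lambda,u}_{n}|^4=\big(|D_n|^2-2\langle D_n,G_n\rangle+|G_n|^2\big)^2$. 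I would expand this square and take the conditional expectation given $Z^{\lambda,u}_n$: then $D_n$ is deterministic and $G_n$ is a centred Gaussian independent of it, so all terms of odd order in $G_n$ vanish — crucially $\mathbb{E}[\langle D_n,G_n\rangle|G_n|^2\mid Z^{\lambda,u}_n]=0$ — and using $\mathbb{E}[\langle D_n,G_n\rangle^2\mid Z^{\lambda,u}_n]\le\tfrac{2\lambda}{N^{p+1}}|D_n|^2$ together with $\mathbb{E}[|G_n|^2\mid Z^{\lambda,u}_n]=\tfrac{2\lambda}{N^p}(d^{\theta}/N+d^x)$ one is left with
\begin{align*}
\mathbb{E}\left[|Z^{\lambda,u}_{n+1}-Z^{\lambda,u}_{n}|^4\mid Z^{\lambda,u}_n\right]\le |D_n|^4+\tfrac{12\lambda}{N^p}\big(\tfrac{d^{\theta}}{N}+d^x\big)|D_n|^2+\mathbb{E}\left[|G_n|^4\mid Z^{\lambda,u}_n\right].
\end{align*}

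Next I would bound the three surviving pieces. Property~\hyperlink{Property1}{1} (the linear growth $|h_{\lambda,u}(v)|\le\mu|v|+\lambda^{-1/2}N^{p/2}$), Cauchy--Schwarz over the $N$ particles, and the rescaling identity $\tfrac1N\sum_{i=1}^N|V^{i,\lambda,u}_{n+1}|^2=|Z^{\lambda,u}_n|^2$ give $|D_n|^2\le\tfrac{4\mu^2\lambda^2}{N^{2p}}|Z^{\lambda,u}_n|^2+\tfrac{4\lambda}{N^p}$, hence $|D_n|^4\le\tfrac{C\mu^4\lambda^4}{N^{4p}}|Z^{\lambda,u}_n|^4+\tfrac{C\lambda^2}{N^{2p}}$. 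For the noise term, $|G_n|^2$ equals $2\lambda N^{-(p+1)}$ times a $\chi^2$ variable with $d^{\theta}+Nd^x$ degrees of freedom, so the identity $\mathbb{E}[(\chi^2_{k})^2]=k^2+2k\le 3k^2$ yields $\mathbb{E}[|G_n|^4\mid Z^{\lambda,u}_n]\le 12\lambda^2N^{-2p}(d^{\theta}/N+d^x)^2$.

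Finally I would take full expectations, invoke Lemma~\hyperlink{lemma1}{2} for $\mathbb{E}[|Z^{\lambda,u}_n|^2]\le C(1+d^{\theta}/N+d^x)$ and Lemma~\hyperlink{lemma2}{3} at $q=2$ for $\mathbb{E}[|Z^{\lambda,u}_n|^4]\le C(1+d^{\theta}/N+d^x)^2$, and use the stepsize restriction $\lambda<\lambda_0<N^{p}/(4\mu)$, i.e.\ $\lambda\mu/N^p<1/4$, to trade each surplus power of $\lambda/N^p$ for a numerical constant: e.g.\ $\tfrac{\mu^4\lambda^4}{N^{4p}}\mathbb{E}[|Z^{\lambda,u}_n|^4]=\tfrac{\lambda^2}{N^{2p}}\big(\tfrac{\lambda\mu}{N^p}\big)^2\mu^2\,\mathbb{E}[|Z^{\lambda,u}_n|^4]\le\tfrac{\lambda^2}{N^{2p}}\tfrac{\mu^2}{16}C(1+d^{\theta}/N+d^x)^2$, and similarly for $\tfrac{\lambda}{N^p}(d^{\theta}/N+d^x)|D_n|^2$. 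Collecting the contributions and using $d^{\theta}/N+d^x\ge1$ gives $\mathbb{E}[|Z^{\lambda,u}_{n+1}-Z^{\lambda,u}_{n}|^4]\le\lambda^2N^{-2p}C_{|z_0|,\mu,b}(1+d^{\theta}/N+d^x)^2$, as claimed.

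I expect the main difficulty to be purely in the bookkeeping: keeping the powers of $N$ aligned through the particle averages (in particular $\mathbb{E}[|G_n|^4\mid Z^{\lambda,u}_n]$ must be computed via the $\chi^2$-variance identity, not a crude bound, or the dimensional dependence is lost), and checking that every resulting term carries the prefactor $\lambda^2N^{-2p}$ with dimensional growth no worse than $(1+d^{\theta}/N+d^x)^2$; the bound $\lambda\mu/N^p<1/4$ is exactly what lets the excess $\lambda$-powers be absorbed into constants. One minor caveat: if $\ell\le1/2$, so that $q=2$ is not literally in the range of Lemma~\hyperlink{lemma2}{3}, the same computation that proves that lemma still delivers the needed fourth-moment bound, since it does not use the growth exponent.
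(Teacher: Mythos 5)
Your proposal is correct and follows essentially the same route as the paper: decompose the one-step increment into tamed drift plus Gaussian noise, expand the fourth power, kill the odd-order Gaussian terms under conditional expectation, control the drift via Property \hyperlink{Property1}{1} and the rescaling identity, bound the Gaussian fourth moment exactly, and absorb surplus powers of $\lambda/N^p$ using the stepsize restriction together with Lemmas \hyperlink{lemma1}{2} and \hyperlink{lemma2}{3}. The only differences are cosmetic (organising the noise contribution through the $\chi^2$-moment identity rather than term by term, and an index shift in $V^{i,\lambda,u}_{n+1}$ versus $V^{i,\lambda,u}_{n}$ that merely reflects the offset already present in the paper's definition \eqref{rescale1}), and your caveat about $q=2$ lying outside the stated range of Lemma \hyperlink{lemma2}{3} when $\ell\leq 1/2$ applies equally to the paper's own argument.
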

\begin{proof} We start by expanding the difference
\begin{align*}
|Z^{\lambda,u}_{n+1}-Z^{\lambda,u}_{n}|^4&=\left(|\theta^{\lambda,u}_{n+1}-\theta^{\lambda,u}_n|^2+\dfrac{1}{N}\sum_{i=1}^N|X_{n+1}^{i,\lambda,u}-X_n^{i,\lambda,u}|^2\right)^2\\
&=\left(\left|\dfrac{-\lambda}{N^{p+1}}\sum_{i=1}^N h^{\theta}_{\lambda,u}(\theta_n^{\lambda,u},X_n^{i,\lambda,u})+\sqrt{\dfrac{2\lambda}{N^{p+1}}}\xi_{n+1}^{(0)}\right|^2\right.\\
&\left.+\dfrac{1}{N}\sum_{i=1}^N\left|-\dfrac{\lambda}{N^p} h^{x}_{\lambda}(\theta_n^{\lambda,u},X_n^{i,\lambda,u})+\sqrt{\dfrac{2\lambda}{N^p}}\xi_{n+1}^{(i)}\right|^2\right)^2\\
&\leq \left(\dfrac{\lambda^2}{N^{2(p+1)}}|\sum_{i=1}^Nh^{\theta}_{\lambda,u}(\theta_n^{\lambda,u},X_n^{i,\lambda,u})|^2+\dfrac{\lambda^2}{N^{2p+1}}\sum_{i=1}^N| h^{x}_{\lambda}(\theta_n^{\lambda,u},X_n^{i,\lambda,u})|^2\right.\\
&-2\langle\dfrac{\lambda}{N^{p+1}}\sum_{i=1}^N h^{\theta}_{\lambda,u}(\theta_n^{\lambda,u},X_n^{i,\lambda,u}),\sqrt{\dfrac{2\lambda}{N^{p+1}}}\xi_{n+1}^{(0)}\rangle\\
&\left.-2\dfrac{\lambda}{N^{p+1}}\sum_{i=1}^N\langle h^x_{\lambda,u}(\theta_n^{\lambda,u},X_n^{i,\lambda,u}),\sqrt{\dfrac{2\lambda}{N^p}}\xi_{n+1}^{(i)} \rangle+\dfrac{2\lambda}{N^{p+1}}|\xi_{n+1}^{(0)}|^2+\dfrac{2\lambda}{N^{p+1}}\sum_{i=1}^N|\xi_{n+1}^{(i)} |^2 \right)^2
\end{align*}
Next, we further expand the expression as follows
\begin{gather}
|Z^{\lambda,u}_{n+1}-Z^{\lambda,u}_{n}|^4\leq\dfrac{\lambda^4}{N^{4p+2}}\left(\sum_{i=1}^N|h_{\lambda,u}(\theta_n^{\lambda,u},X_n^{i,\lambda,u})|^2\right)^2+2\left(\dfrac{\lambda^2}{N^{2p+1}}\sum_{i=1}^N|h_{\lambda,u}(\theta_n^{\lambda,u},X_n^{i,\lambda,u})|^2\right)\nonumber\\
\times\left(-2\langle\dfrac{\lambda}{N^{p+1}}\sum_{i=1}^Nh^{\theta}_{\lambda,u}(\theta_n^{\lambda,u},X_n^{i,\lambda,u}),\sqrt{\dfrac{2\lambda}{N^{p+1}}}\xi_{n+1}^{(0)}\rangle\right.\nonumber\\ \left.-2\dfrac{\lambda}{N^{p+1}}\sum_{i=1}^N\langle h^x_{\lambda,u}(\theta_n^{\lambda,u},X_n^{i,\lambda,u}),\sqrt{\dfrac{2\lambda}{N^p}}\xi_{n+1}^{(i)}  \rangle
+\dfrac{2\lambda}{N^{p+1}}|\xi_{n+1}^{(0)}|^2+\dfrac{2\lambda}{N^{p+1}}\sum_{i=1}^N|\xi_{n+1}^{(i)} |^2\right)\nonumber\\
+\left|-2\langle\dfrac{\lambda}{N^{p+1}}\sum_{i=1}^Nh^{\theta}_{\lambda,u}(\theta_n^{\lambda,u},X_n^{i,\lambda,u}),\sqrt{\dfrac{2\lambda}{N^{p+1}}}\xi_{n+1}^{(0)}\rangle\right.\nonumber\\
\left.-2\dfrac{\lambda}{N^{p+1}}\sum_{i=1}^N\langle h^x_{\lambda,u}(\theta_n^{\lambda,u},X_n^{i,\lambda,u}),\sqrt{\dfrac{2\lambda}{N^p}}\xi_{n+1}^{(i)}  \rangle +\dfrac{2\lambda}{N^{p+1}}|\xi_{n+1}^{(0)}|^2+\dfrac{2\lambda}{N^{p+1}}\sum_{i=1}^N|\xi_{n+1}^{(i)} |^2\right|^2.\label{eq:a1}
\end{gather}
The first term in \eqref{eq:a1} is directly bounded by the moments of $Z^{\lambda,u}_{n}$,
\begin{align}
&\mathbb{E}\left[\dfrac{\lambda^4}{N^{4p+2}}\left(\sum_{i=1}^N|h_{\lambda,u}(\theta_n^{\lambda,u},X_n^{i,\lambda,u})|^2\right)^2|Z^{\lambda,u}_{n}\right]\leq \dfrac{\lambda^4}{N^{4p+2}}\left(2\mu^2\sum_{i=1}^N|V_n^{i,\lambda,u}|^2+2\lambda^{-1}N^p\right)^2\nonumber\\
&\leq 4\mu^4\dfrac{\lambda^4}{N^{4p}}\dfrac{1}{N^2}\left(\sum_{i=1}^N|V_n^{i,\lambda,u}|^2\right)^2+4\dfrac{\lambda^2}{N^{2p}}\leq \mu^2\dfrac{\lambda^2}{N^{2p}}|Z^{\lambda,u}_{n}|^4+4\dfrac{\lambda^2}{N^{2p}}.\label{b3}
\end{align}
where the last inequality follows due to the restriction $\lambda<N^p/4\mu$.
In the second term, the inner products $\langle\cdot,\cdot\rangle$ vanish under expectation, leading to
\begin{align}
&2\left(\dfrac{\lambda^2}{N^{2p+1}}\sum_{i=1}^N|h_{\lambda,u}(\theta_n^{\lambda,u},X_n^{i,\lambda,u})|^2\right)\times\left(\dfrac{2\lambda}{N^{p+1}}\mathbb{E}\left[|\xi_{n+1}^{(0)}|^2|Z^{\lambda,u}_{n}\right]+\dfrac{2\lambda}{N^{p+1}}\sum_{i=1}^N\mathbb{E}\left[|\xi_{n+1}^{(i)} |^2|Z^{\lambda,u}_{n}\right]\right)\nonumber\\&\leq 2\dfrac{\lambda^2}{N^{2p}}\left(2\mu^2|Z^{\lambda,u}_{n}|^2+2\lambda^{-1}N^p\right)\left(2\dfrac{\lambda}{N^p} d^{\theta}/N+2\dfrac{\lambda}{N^p} d^x\right)\leq 8\dfrac{\lambda^3}{N^{3p}} d(\mu^2|Z^{\lambda,u}_{n}|^2+\lambda^{-1}N^p)\nonumber\nonumber\\
&\leq 2d\mu \dfrac{\lambda^2}{N^{2p}}|Z^{\lambda,u}_{n}|^2+8d\dfrac{\lambda^2}{N^{2p}}.\label{eq:b1}
\end{align}
By applying the usual elementary inequality and the Cauchy-Schwartz to the last term of \eqref{eq:a1}, we obtain
\begin{align}
&\dfrac{16\lambda^2}{N^{2p+2}}\left|\sum_{i=1}^Nh^{\theta}_{\lambda,u}(\theta_n^{\lambda,u},X_n^{i,\lambda,u})\right|^2\mathbb{E}\left[|\sqrt{\dfrac{2\lambda}{N^{p+1}}}\xi_{n+1}^{(0)}|^2\right]+\dfrac{16\lambda^2}{N^{2p+1}}\sum_{i=1}^N|h^{x}_{\lambda}(\theta_n^{\lambda,u},X_n^{i,\lambda,u})|^2\mathbb{E}\left[|\sqrt{\dfrac{2\lambda}{N^p}}\xi_{n+1}^{(i)} |^2\right]\nonumber\\&+\dfrac{16\lambda^2}{N^{2p+2}}\mathbb{E}\left[|\xi_{n+1}^{(0)}|^4\right]+\dfrac{16\lambda^2}{N^{2p+1}}\sum_{i=1}^N\mathbb{E}\left[|\xi_{n+1}^{(i)}|^4\right]\leq 16\dfrac{\lambda^2}{N^{2p}}\left(2\dfrac{\lambda}{N^p} d^{\theta}/N\right)\dfrac{1}{N}\sum_{i=1}^N|h^{\theta}_{\lambda,u}(\theta_n^{\lambda,u},X_n^{i,\lambda,u})|^2\nonumber\\&+16\dfrac{\lambda^2}{N^{2p}}\left(2\dfrac{\lambda}{N^p} d^x\right)\dfrac{1}{N}\sum_{i=1}^N|h^{x}_{\lambda}(\theta_n^{\lambda,u},X_n^{i,\lambda,u})|^2+16\dfrac{\lambda^2}{N^{2p}}(d^{\theta}/N)^2+16\dfrac{\lambda^2}{N^{2p}}(d^x)^2\nonumber
\end{align}
which, by using Property \hyperlink{Property1}{1} of the taming function, can be written as \begin{align}
&\dfrac{16\lambda^2}{N^{2p+2}}\left|\sum_{i=1}^Nh^{\theta}_{\lambda,u}(\theta_n^{\lambda,u},X_n^{i,\lambda,u})\right|^2\mathbb{E}\left[|\sqrt{\dfrac{2\lambda}{N^{p+1}}}\xi_{n+1}^{(0)}|^2\right]\nonumber\\
&\leq 16\dfrac{\lambda^3}{N^{3p}}(2d)\dfrac{1}{N}\sum_{i=1}^N\left(2\mu^2|V_n^{i,\lambda,u}|^2+2\lambda^{-1}N^p\right)+16\dfrac{\lambda^2}{N^{2p}}d^2\nonumber\\
&\leq 16\cdot 4\dfrac{\lambda^3}{N^{3p}}\mu^2 d|Z^{\lambda,u}_{n}|^2+16\cdot 4\dfrac{\lambda^2}{N^{2p}} d+16\dfrac{\lambda^2}{N^{2p}} d^2\nonumber\\
&\leq  16d\mu\dfrac{\lambda^2}{N^{2p}}|Z^{\lambda,u}_{n}|^2+64\dfrac{\lambda^2}{N^{2p}} d+16\dfrac{\lambda^2}{N^{2p}} d^2.\label{eq:b2}
\end{align}
Thus, combining \eqref{b3},\eqref{eq:b1} and \eqref{eq:b2} with the bound in \eqref{eq:a1}, we further refine the inequality as 
\begin{align*}
\mathbb{E}\left[|Z^{\lambda,u}_{n+1}-Z^{\lambda,u}_{n}|^4|Z^{\lambda,u}_{n}\right]&\leq \mu^2\dfrac{\lambda^2}{N^{2p}}|Z^{\lambda,u}_{n}|^4+18d\mu\dfrac{\lambda^2}{N^{2p}}|Z^{\lambda,u}_{n}|^2+4\dfrac{\lambda^2}{N^{2p}}(4+18d+d^2)\\
&\leq \dfrac{\lambda^2}{N^{2p}}\left(\mu^2|Z^{\lambda,u}_{n}|^4+18d\mu|Z^{\lambda,u}_{n}|^2+4(4+18d+d^2)\right).
\end{align*}
Finally, taking the expectation and using Lemmas \hyperlink{lemma1}{2} and  \hyperlink{lemma2}{3}, we conclude
\begin{align*}
\mathbb{E}\left[|Z^{\lambda,u}_{n+1}-Z^{\lambda,u}_{n}|^4\right]&\leq \dfrac{\lambda^2}{N^{2p}}81\left(\mu^2C^2_{|z_0|,\mu,b}+18\mu C_{|z_0|,b,2,\mu}+4\right)(1+d^{\theta}/N+d^x)^2.
\end{align*}
\end{proof}
\subsubsection{Proof of Proposition 3}\label{Proof3}
\begin{proof}
    Consider the rescaled version \eqref{eq:001} of the time-scaled dynamics \eqref{eq:02}-\eqref{eq:03} and denote by $\pi^N_{z,*}$, the rescaled invariant measure that corresponds to $\pi^N_*$, so that the rescaled initiation condition $\zeta=(\zeta_\theta,N^{-1/2}\zeta_{x^1},\ldots,N^{-1/2}\zeta_{x^N})$ follows the law $\mathcal{L}(\zeta)=\pi^N_{z,*}$. Now let $\mathcal{Z}_t^{u,N,\zeta}$ denote the aforementioned dynamics with fixed initial condition $Z_0^{u,N}=\zeta$. Then it follows that $\mathcal{L}(\mathcal{Z}_t^{u,N,\zeta})=\pi^N_{z,*}$. Since the difference $ \mathcal{Z}_t^{u,N}-\mathcal{Z}_t^{u,N,\zeta}$ has vanishing diffusion, using It\^o's formula and taking the expectation, one writes
    \begin{align*}
        \mathbb{E}\left[|\mathcal{Z}_t^{u,N}-\mathcal{Z}_t^{u,N,\zeta}|^2\right]&=\mathbb{E}\left[|\theta_0^N-\zeta_{\theta}|^2\right]+\dfrac{1}{N}\mathbb{E}\left[|X_0^{i,N}-\zeta_{x^i}|^2\right]\\&-\dfrac{2}{N^{p+1}}\sum_{i=1}^N\mathbb{E}\left[\int_0^t \left\langle \nabla_{\theta}U(\mathcal{V}_s^{i,u,N})-\nabla_{\theta}U(\mathcal{V}_s^{i,u,N,\zeta}),\vartheta_s^{u,N}-\vartheta_s^{u,N,\zeta} \right\rangle ds\right]\\&-\dfrac{2}{N^{p+1}}\sum_{i=1}^N\mathbb{E}\left[\int_0^t \left\langle \nabla_{x}U(\mathcal{V}_s^{i,u,N})-\nabla_{x}U(\mathcal{V}_s^{i,u,N,\zeta}),\mathcal{X}_s^{i,u,N}-\mathcal{X}_s^{i,u,N,\zeta} \right\rangle ds\right]\\
        &=\mathbb{E}\left[|Z_0^N-\zeta|^2\right]\\
        &-\dfrac{2}{N^{p+1}}\sum_{i=1}^N\mathbb{E}\left[\int_0^t \left\langle \nabla U(\mathcal{V}_s^{i,u,N})-\nabla U(\mathcal{V}_s^{i,u,N,\zeta}),\mathcal{V}_s^{i,u,N}-\mathcal{V}_s^{i,u,N,\zeta} \right\rangle ds\right].
        \end{align*}
    Then by differentiating both sides, and applying assumption \hyperlink{Assum2}{A2}, we obtain
    \begin{align*}
        \dfrac{d}{dt}\mathbb{E}\left[|\mathcal{Z}_t^{u,N}-\mathcal{Z}_t^{u,N,\zeta}|^2\right]\leq\dfrac{-2\mu}{N^{p+1}}\sum_{i=1}^N\mathbb{E}\left[|\mathcal{V}_t^{i,u,N}-\mathcal{V}_t^{i,u,N,\zeta}|^2\right]=\dfrac{-2\mu}{N^p}\mathbb{E}\left[|\mathcal{Z}_t^{u,N}-\mathcal{Z}_t^{u,N,\zeta}|^2\right],
    \end{align*}
    that is
    \begin{align*}
    \dfrac{d}{dt}\left(\exp(2\mu/N^p)\mathbb{E}\left[|\mathcal{Z}_t^{u,N}-\mathcal{Z}_t^{u,N,\zeta}|^2\right]\right)\leq 0.    
    \end{align*}
    Integrating over $[0,t]$ and dividing by $\exp(2\mu/N^p)$ yields
    \begin{align*}
    \mathbb{E}\left[|\mathcal{Z}_t^{u,N}-\mathcal{Z}_t^{u,N,\zeta}|^2\right]\leq \exp(-2\mu/N^p)\mathbb{E}\left[|\mathcal{Z}_0^{u,N}-\mathcal{Z}_0^{u,N,\zeta}|^2\right]= \exp(-2\mu/N^p)\mathbb{E}\left[|Z_0^N-\zeta|^2\right].
    \end{align*}
    Hence, one writes
    \begin{align*}
        W_2\left(\mathcal{L}(\vartheta_t^{u,N}),\pi_{\Theta}^N\right)^2&=\mathbb{E}\left[|\vartheta_t^{u,N}-\vartheta_t^{u,N,\zeta}|^2\right]\leq\mathbb{E}\left[|\mathcal{Z}_t^N-\mathcal{Z}_t^{N,\zeta}|^2\right]\\&\leq\exp(-2\mu/N^p)\mathbb{E}\left[|Z_0^N-\zeta|^2\right]\\
        &\leq\exp(-2\mu/N^p)\left(\mathbb{E}\left[|Z_0^N-z^*|^2\right]+\mathbb{E}\left[|z^*-\zeta|^2\right]\right)\\
        &\leq\exp(-2\mu/N^p)\left(\mathbb{E}\left[|Z_0^N-z^*|^2\right]+\dfrac{d^x N+d^{\theta}}{\mu N^{p+1}}\right).
    \end{align*}
    To bound the last term one considers the difference between $\mathcal{Z}_t^{u,N}$ and $z^*$, where $\nabla U(z^*)=0$, since $z^*$ is a minimiser. Similarly with the aforementioned calculations, one uses It\^o's formula, takes the expectation and after applying assumption \hyperlink{Assum2}{A2} and computing the derivative, yields
    \begin{align*}
        \dfrac{d}{dt}\mathbb{E}\left[|\mathcal{Z}_t^{u,N}-z^*|^2\right]\leq -2\mu\mathbb{E}\left[|\mathcal{Z}_t^{u,N}-z^*|^2\right]+2\dfrac{d^x N+d^{\theta}}{ N^{p+1}},
    \end{align*}
    which through integration implies
    \begin{align*}
     \mathbb{E}\left[|\mathcal{Z}_t^{u,N}-z^*|^2\right]\leq\dfrac{d^x N+d^{\theta}}{\mu N^{p+1}}.
    \end{align*}
    Therefore one immediately obtains
    \begin{align*}
    W_2(\mathcal{L}(\zeta),\delta_{z^*})&=W_2(\pi^N_{z,*},\delta_{z^*})\leq \limsup_{t\to\infty}W_2(\mathcal{L}(\mathcal{Z}_t^{u,N}),\delta_{z^*})\leq \sqrt{\dfrac{d^x N+d^{\theta}}{\mu N^{p+1}}}.
    \end{align*}
\end{proof}
\subsubsection{Proof of Proposition 5}\label{Proof5}
\begin{proof}
    \noindent We begin by considering the standard decomposition of the difference between the interpolation \eqref{rescale31} and the scaled dynamics \eqref{rescale21}. By applying the It\^{o}'s formula to $x\to |x|^2$, we obtain
\begin{gather*}
|\overline{Z}^{\lambda,u}_t-{\mathcal{Z}}_{\lambda t}^{u,N}|^2=|\overline{\theta}^{\lambda,u}_t-{\vartheta}_{\lambda t}^{u,N}|^2+\dfrac{1}{N}\sum_{i=1}^N|\overline{X}^{i,\lambda,u}_t-\mathcal{X}_{\lambda t}^{i,u,N}|^2\\
=-2\lambda\int_0^t\left\langle \dfrac{1}{N^{p+1}}\sum_{i=1}^N h^{\theta}_{\lambda,u}(\theta_{\lfloor{t}\rfloor}^{\lambda,u},X^{i,\lambda,u}_{\lfloor{t}\rfloor})-\dfrac{1}{N^{p+1}}\sum_{i=1}^Nh^{\theta}({\vartheta}_{\lambda t}^{u,N},\mathcal{X}_{\lambda t}^{i,u,N}),\overline{{\theta}}_{t}^{\lambda,u}-{\vartheta}_{\lambda t}^{u,N}\right\rangle ds\\
-\dfrac{2\lambda}{N^{p+1}}\sum_{i=1}^N\int_0^t\left\langle h^{x}_{\lambda,u}(\theta_{\lfloor{t}\rfloor}^{\lambda,u},X^{i,\lambda,u}_{\lfloor{t}\rfloor})-h^{x}({\vartheta}_{\lambda t}^{u,N},\mathcal{X}_{\lambda t}^{i,u,N}), \overline{X}_{t}^{i,\lambda,u}-\mathcal{X}_{\lambda t}^{i,u,N}\right\rangle ds\\
=-\dfrac{2\lambda}{N^{p+1}}\sum_{i=1}^N\int_0^t \langle h_{\lambda,u}(V_{\lfloor{t}\rfloor}^{i,\lambda,u})-h(\mathcal{V}_{\lambda t}^{i,u,N}) , \overline{V}_{t}^{i,\lambda,u}-\mathcal{V}_{\lambda t}^{i,u,N}\rangle ds.
\end{gather*}
To make the subsequent calculations more accessible, we introduce the following notations
$$e^i_t=\overline{V}_{t}^{i,\lambda,u}-\mathcal{V}_{\lambda t}^{i,u,N}\ \text{and}\ e_t=\overline{Z}^{\lambda,u}_t-\mathcal{Z}_{\lambda t}^{u,N},$$ this implies that, $1/N\sum_{i=1}^N |e^i_t|^2=|e_t|^2.$ Taking the derivative of this expression, we obtain
\begin{align}
\dfrac{d}{dt}|e_t|^2&=-\dfrac{2\lambda}{N^{p+1}}\sum_{i=1}^N\langle  h_{\lambda,u}(V_{\lfloor{t}\rfloor}^{i,\lambda,u})-h(\mathcal{V}_{\lambda t}^{i,u,N}), e_t^i\rangle\nonumber\\
&=-\dfrac{2\lambda}{N^{p+1}}\sum_{i=1}^N\langle  h(\overline{V}_{t}^{i,\lambda,u})-h(\mathcal{V}_{\lambda t}^{i,u,N}), e_t^i\rangle-\dfrac{2\lambda}{N^{p+1}}\sum_{i=1}^N\langle  h_{\lambda,u}(V_{\lfloor{t}\rfloor}^{i,\lambda,u})-h(V_{\lfloor{t}\rfloor}^{i,\lambda,u}), e_t^i\rangle\nonumber\\
&-\dfrac{2\lambda}{N^{p+1}}\sum_{i=1}^N\langle  h(V_{\lfloor{t}\rfloor}^{i,\lambda,u})-h(\overline{V}_{t}^{i,\lambda,u}), e_t^i\rangle\nonumber\\
&:=k_1(t)+k_2(t)+k_3(t).\label{c1}
\end{align}
The first term, $k_1(t)$ is controlled through the convexity of the initial potential, while the taming error $k_2(t)$ is controlled by the properties of the taming function and the moment bounds established earlier. The same bounds also control the discretisation error $k_3(t)$. Specifically we have
\begin{align}
k_1(t)&=-\dfrac{2\lambda}{N^{p+1}}\sum_{i=1}^N\langle  h(\overline{V}_{t}^{i,\lambda,u})-h(\mathcal{V}_{\lambda t}^{i,u,N}), e_t^i\rangle\nonumber\\
&\leq-\dfrac{2\lambda\mu}{N^{p+1}}\sum_{i=1}^N |e_t^i|^2=-\dfrac{2\lambda\mu}{N^p}|e_t|^2\label{c2}.
\end{align}
By applying the Cauchy-Schwarz inequality and the $\epsilon$-Young inequality with $\epsilon=\mu/2$, we can further deduce
\begin{align}
k_2(t)&=-\dfrac{2\lambda}{N^{p+1}}\sum_{i=1}^N\langle  h_{\lambda,u}(V_{\lfloor{t}\rfloor}^{i,\lambda,u})-h(V_{\lfloor{t}\rfloor}^{i,\lambda,u}), e_t^i\rangle\nonumber\\
&\leq\dfrac{2\lambda}{N^{p+1}}\sum_{i=1}^N\left(\dfrac{\mu}{4}|e_t^i|^2+\dfrac{1}{\mu}|h_{\lambda,u}(V_{\lfloor{t}\rfloor}^{i,\lambda,u})-h(V_{\lfloor{t}\rfloor}^{i,\lambda,u})|^2\right)\nonumber\\
&\leq \dfrac{2\lambda\mu}{4N^p} |e_t|^2+\dfrac{2\lambda}{\mu N^{p+1}}\sum_{i=1}^N\dfrac{\lambda C_1}{N^p}(1+|V_{\lfloor{t}\rfloor}^{i,\lambda,u}|^{4(\ell+1)})\nonumber\\
&\leq \dfrac{\lambda\mu}{2N^p} |e_t|^2+\dfrac{2\lambda^2C_1}{\mu N^{2p}} +\dfrac{2\lambda^2C_1}{\mu N^p}\left(\dfrac{1}{N}\sum_{i=1}^N|V_{\lfloor{t}\rfloor}^{i,\lambda,u}|^2\right)^{2(\ell+1)}\nonumber\\&\leq\dfrac{\lambda\mu}{2N^p} |e_t|^2+\dfrac{2\lambda^2C_1}{\mu N^{p}} +\dfrac{2\lambda^2C_1}{\mu N^p}|Z_{\lfloor{t}\rfloor}^{\lambda,u}|^{4(\ell+1)}.\label{c3}
\end{align}
Note that the constant $C_1$ is introduced in Property \hyperlink{Property2}{2} and $p$ was chosen such that $p=2\ell+1$, ensuring that the latter expression involves a power of $|Z_{\lfloor{t}\rfloor}^{\lambda,u}|$. Similarly, for the final term, we derive
\begin{align*}
k_3(t)&=-\dfrac{2\lambda}{N^{p+1}}\sum_{i=1}^N\langle  h(V_{\lfloor{t}\rfloor}^{i,\lambda,u})-h(\overline{V}_{t}^{i,\lambda,u}), e_t^i\rangle\\
&\leq\dfrac{2\lambda}{N^{p+1}}\sum_{i=1}^N\left(\dfrac{\mu}{4}|e_t^i|^2+\dfrac{1}{\mu}| h(V_{\lfloor{t}\rfloor}^{i,\lambda,u})-h(\overline{V}_{t}^{i,\lambda,u})|^2\right)\\
&\leq\dfrac{\lambda\mu}{2N^p}|e_t|^2+\dfrac{2\lambda C_2}{\mu N^{p+1}}\sum_{i=1}^N \left((1+|V_{\lfloor{t}\rfloor}^{i,\lambda,u}|^{2\ell}+|\overline{V}_{t}^{i,\lambda,u}|^{2\ell})|V_{\lfloor{t}\rfloor}^{i,\lambda,u}-\overline{V}_{t}^{i,\lambda,u}|^2\right)\\
&\leq\dfrac{\lambda\mu}{2N^p}|e_t|^2+\dfrac{2\lambda C_2}{\mu N^{p+1}}\sum_{i=1}^N \left(1+|V_{\lfloor{t}\rfloor}^{i,\lambda,u}|^{2\ell}+|\overline{V}_{t}^{i,\lambda,u}|^{2\ell}\right)\sum_{i=1}^N|V_{\lfloor{t}\rfloor}^{i,\lambda,u}-\overline{V}_{t}^{i,\lambda,u}|^2.
\end{align*}
Rearranging terms and taking expectations, and then applying the Holder's inequality, leads us to
\begin{align}
\mathbb{E}\left[k_3(t)\right]&\leq \dfrac{\lambda\mu}{2N^p}\mathbb{E}\left[|e_t|^2\right]\nonumber\\
&+\dfrac{2\lambda C_2}{\mu N^{(p+1)/2}}\mathbb{E}\left[\dfrac{1}{N^{\ell}}\sum_{i=1}^N \left(1+|V_{\lfloor{t}\rfloor}^{i,\lambda,u}|^{2\ell}+|\overline{V}_{t}^{i,\lambda,u}|^{2\ell}\right)\dfrac{1}{N}\sum_{i=1}^N|V_{\lfloor{t}\rfloor}^{i,\lambda,u}-\overline{V}_{t}^{i,\lambda,u}|^2\right]\nonumber\\
&\leq\dfrac{\lambda\mu}{2N^p}\mathbb{E}\left[|e_t|^2\right]+\dfrac{2\lambda C_2}{\mu}\mathbb{E}\left[\left(1+|Z_{\lfloor{t}\rfloor}^{\lambda,u}|^{2\ell}+|\overline{Z}_{t}^{\lambda,u}|^{2\ell}\right)|Z_{\lfloor{t}\rfloor}^{\lambda,u}-\overline{Z}_{t}^{\lambda,u}|^2\right]\nonumber\\
&\leq\dfrac{\lambda\mu}{2N^p}\mathbb{E}\left[|e_t|^2\right]+\dfrac{6\lambda C_2}{\mu}\mathbb{E}^{1/2}\left[\left(1+|Z_{\lfloor{t}\rfloor}^{\lambda,u}|^{4\ell}+|\overline{Z}_{t}^{\lambda,u}|^{4\ell}\right)\right]\mathbb{E}^{1/2}\left[|Z_{\lfloor{t}\rfloor}^{\lambda,u}-\overline{Z}_{t}^{\lambda,u}|^4\right]\nonumber\\
&\leq\dfrac{\lambda\mu}{2N^p}\mathbb{E}\left[|e_t|^2\right]+\dfrac{6\lambda^2 C_2C_3}{\mu N^p}\mathbb{E}^{1/2}\left[\left(1+|Z_{\lfloor{t}\rfloor}^{\lambda,u}|^{4\ell}+|\overline{Z}_{t}^{\lambda,u}|^{4\ell}\right)\right](1+d^{\theta}/N+d^x)\nonumber\\
&\leq\dfrac{\lambda\mu}{2N^p}\mathbb{E}\left[|e_t|^2\right]+\dfrac{6\lambda^2 C_2C_3C_4}{\mu N^p}(1+d^{\theta}/N+d^x)^{\ell+1}.\label{c4}
\end{align}
Next, taking the expectation of the remaining terms in \eqref{c2} and \eqref{c3} respectively and combining them with \eqref{c4}, we arrive, in light of \eqref{c1}, at
\begin{align*}
\dfrac{d}{dt}\mathbb{E}\left[|e_t|^2\right]&\leq -\dfrac{\mu\lambda}{N^p}\mathbb{E}\left[|e_t|^2\right]+\dfrac{2\lambda^2C_1}{\mu N^{p}} +\dfrac{2\lambda^2C_1C_5}{\mu N^p}(1+d^{\theta}/N+d^x)^{2(\ell+1)}\\
&+\dfrac{6\lambda^2 C_2C_3C_4}{\mu N^p}(1+d^{\theta}/N+d^x)^{\ell+1}\\
&\leq  -\dfrac{\mu\lambda}{N^p}\mathbb{E}\left[|e_t|^2\right]+\dfrac{\lambda^2 C_6}{\mu N^p}(1+d^{\theta}/N+d^x)^{2(\ell+1)},
\end{align*}
Finally, multiplying both sides by the integrating factor $\exp{(\lambda\mu t/N^p)}$ and rearranging the term yields
\begin{gather*}
\dfrac{d}{dt}\left(\exp{\left(\lambda\mu t/N^p\right)}\mathbb{E}\left[|e_t|^2\right]\right)\leq\dfrac{\lambda^2 C_6}{\mu N^p}(1+d^{\theta}/N+d^x)^{2(\ell+1)} \exp{\left(\lambda\mu t/N^p\right)}\\
\mathbb{E}\left[|e_t|^2\right]\leq \lambda \dfrac{C_6}{\mu^2}(1+d^{\theta}/N+d^x)^{2(\ell+1)}.
\end{gather*}
\end{proof}
\subsection{Coordinate-wise tamed scheme}
\subsubsection{Key quantities for the proof of the main Lemmas.}
The following definitions that refer to the rescaled dynamics \eqref{eq:006}-\eqref{eq:007} of the algorithm \hyperref[algo2]{tIPLAc} and its continuous time interpolations \eqref{eq:008}-\eqref{eq:009} are given as\begin{align} Z^{\lambda,c}_{n}=\left(\theta_{n+1}^{\lambda,c},N^{-1/2}X_{n+1}^{1,\lambda,c},\ldots,N^{-1/2}X_{n+1}^{N,\lambda,c}\right),\label{rescale_cord}\end{align}
\begin{align} \overline{Z}^{\lambda,c}_t=\left(\overline{\theta}^{\lambda,c}_t,N^{-1/2}\overline{X}^{1,\lambda,c}_t,\ldots,N^{-1/2}\overline{X}^{N,\lambda,c}_t\right),\label{rescale2_cord}\end{align}
\begin{align} \mathcal{Z}_{\lambda t}^{c,N}=\left(\vartheta_{\lambda t}^{c,N},N^{-1/2}\mathcal{X}_{\lambda t}^{1,c,N},\ldots,N^{-1/2}\mathcal{X}_{\lambda t}^{N,c,N}\right).\label{rescale3_cord} \end{align}
\subsubsection{Moment and increment bounds}
\begin{lemma}\label{lemma4} Let \hyperlink{Assum1}{A1}, \hyperlink{Assum3a}{A3-i} and \hyperlink{Assum4}{A4} hold. Then, for any $0<\lambda<1/(4\mu)$, it holds that,
$$\mathbb{E}\left[|V_{n}^{i,\lambda,c}|^2\right]\leq C_{|z_0|,\mu,b}(1+d^{\theta}+d^x),$$
for a constant $C>0$ independent of $N,n,\lambda, d^x \text{and } d^{\theta}$, $\forall i\in\{1,\ldots,N\}$. If assumption \hyperlink{Assum3b}{A3-ii} is used in place of \hyperlink{Assum3a}{A3-i}, the lemma holds under the updated timestep constraint, $\lambda<1/(8\mu)$.\end{lemma}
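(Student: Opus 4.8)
The plan is to establish, directly for the pairs $V_n^{i,\lambda,c}=(\theta_n^{\lambda,c},X_n^{i,\lambda,c})$, a one-step inequality of the form $\mathbb{E}[|V_{n+1}^{i,\lambda,c}|^2\mid\mathcal{F}_n]\le(1-\mu\lambda+2\mu^2\lambda^2)|V_n^{i,\lambda,c}|^2+\lambda\,c(d^\theta,d^x)$ with $c$ affine in the dimensions, and then iterate it. Starting from the tIPLAc recursions \eqref{eq:09}--\eqref{eq:000}, expanding $|V_{n+1}^{i,\lambda,c}|^2=|\theta_{n+1}^{\lambda,c}|^2+|X_{n+1}^{i,\lambda,c}|^2$ and taking the conditional expectation with respect to the natural filtration $\mathcal{F}_n$ at step $n$, the Gaussian--drift cross terms vanish by independence, leaving for the $\theta$-block the dissipativity term $-\tfrac{2\lambda}{N}\sum_{j}\langle\theta_n^{\lambda,c},h^\theta_{\lambda,c}(\theta_n^{\lambda,c},X_n^{j,\lambda,c})\rangle$, the second-order term $\tfrac{\lambda^2}{N^2}\big|\sum_{j}h^\theta_{\lambda,c}(\theta_n^{\lambda,c},X_n^{j,\lambda,c})\big|^2$ and the noise contribution $2\lambda d^\theta/N$; analogously for the $X^i$-block with noise contribution $2\lambda d^x$.

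Under A3-i the recursion closes on $V^i$ alone, owing to two consequences of the coordinate-wise taming. First, since $h_{\lambda,c}$ acts coordinatewise, $|h^\theta_{\lambda,c}(\theta,x)_k|\le\mu|\theta_k|+\lambda^{-1/2}$ for $k\le d^\theta$, whence $|h^\theta_{\lambda,c}(\theta,x)|^2\le 2\mu^2|\theta|^2+2d^\theta\lambda^{-1}$ --- a bound not involving $x$ (this is the coordinate-wise form of Property~1, sharper than its squared version); likewise $|h^x_{\lambda,c}(\theta,x)|^2\le 2\mu^2|x|^2+2d^x\lambda^{-1}$. By Jensen's inequality the second-order terms are thus $\le 2\mu^2\lambda^2|\theta_n^{\lambda,c}|^2+2\lambda d^\theta$ and $\le 2\mu^2\lambda^2|X_n^{i,\lambda,c}|^2+2\lambda d^x$, with no cross-particle coupling. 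Second, $h_{\lambda,c}$ inherits the coordinate-wise dissipativity from A3-i, so $\langle\theta_n^{\lambda,c},h^\theta_{\lambda,c}(\theta_n^{\lambda,c},x)\rangle\ge\tfrac{\mu}{2}|\theta_n^{\lambda,c}|^2-b_\theta$ uniformly in $x$ --- whence the average over $j$ collapses --- with $b_\theta=\tfrac1{2\mu}\sum_{k\le d^\theta}|h^{(k)}_0|^2$, and $\langle X_n^{i,\lambda,c},h^x_{\lambda,c}(\theta,X_n^{i,\lambda,c})\rangle\ge\tfrac{\mu}{2}|X_n^{i,\lambda,c}|^2-b_x$, where $b_\theta+b_x=b$. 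Assembling these estimates gives $\mathbb{E}[|V_{n+1}^{i,\lambda,c}|^2\mid\mathcal{F}_n]\le(1-\mu\lambda+2\mu^2\lambda^2)|V_n^{i,\lambda,c}|^2+2\lambda(b+2d^\theta+2d^x)$, and since $\lambda<1/(4\mu)$ forces $2\mu^2\lambda^2\le\mu\lambda/2$, iterating (summing the geometric series of ratio $1-\mu\lambda/2$) yields $\mathbb{E}[|V_n^{i,\lambda,c}|^2]\le\mathbb{E}[|V_0^{i,\lambda,c}|^2]+\tfrac{4}{\mu}(b+2d^\theta+2d^x)\le C_{|z_0|,\mu,b}(1+d^\theta+d^x)$, the initial moment being finite by A4.

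For A3-ii ($d^\theta=d^x=d$, $4\rho<\mu$) the inherited dissipativity carries a cross term, $\langle\theta_n^{\lambda,c},h^\theta_{\lambda,c}(\theta_n^{\lambda,c},x)\rangle\ge\tfrac{\mu}{2}|\theta_n^{\lambda,c}|^2-\rho|x|^2-b_\theta$, which couples $|\theta_{n+1}^{\lambda,c}|^2$ to $\tfrac1N\sum_j|X_n^{j,\lambda,c}|^2$; here I would first run the same computation on the rescaled quantity $|Z_n^{\lambda,c}|^2=|\theta_n^{\lambda,c}|^2+\tfrac1N\sum_i|X_n^{i,\lambda,c}|^2$, where the two cross terms combine with the diagonal ones to produce a net dissipation $-\lambda(\mu-2\rho)|Z_n^{\lambda,c}|^2+2\lambda b$, while the second-order terms still sum to $2\mu^2\lambda^2|Z_n^{\lambda,c}|^2+4\lambda d$. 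Since $\mu-2\rho>\mu/2$ and $\lambda<1/(8\mu)$ guarantees $2\mu^2\lambda^2\le\lambda(\mu-2\rho)/2$, iterating gives $\sup_n\mathbb{E}[|Z_n^{\lambda,c}|^2]\le C(1+d)$, hence $\sup_n\mathbb{E}[|\theta_n^{\lambda,c}|^2]\le C(1+d)$. Feeding this into the $X^i$-recursion --- whose second-order part still decouples but which now retains a forcing term $2\lambda\rho|\theta_n^{\lambda,c}|^2$ coming from the dissipativity --- and iterating once more gives $\sup_n\mathbb{E}[|X_n^{i,\lambda,c}|^2]\le C'(1+d)$; adding the two bounds gives the claim.

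The main obstacle is exactly this coupling. Because the $\theta$-dynamics averages the tamed drift over all $N$ particles, a naive expansion of $|V_{n+1}^{i,\lambda,c}|^2$ drags in every $|X_n^{j,\lambda,c}|^2$, and there is no symmetry usable at the level of second moments to fold it back onto $V^i$. What rescues the argument under A3-i is that coordinate-wise taming makes the $\theta$-block growth bound $|h^\theta_{\lambda,c}(\theta,x)|\le\mu|\theta|+\sqrt{d^\theta}\,\lambda^{-1/2}$ independent of $x$ and, under A3-i, the dissipativity lower bound independent of the second argument --- so every term of the $V^i$-recursion involves only $\theta_n^{\lambda,c}$ and $X_n^{i,\lambda,c}$. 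Under A3-ii the latter feature is lost, forcing the detour through $Z^{\lambda,c}$, which is also where the stricter step-size restriction $\lambda<1/(8\mu)$ originates, the $Z^{\lambda,c}$-recursion contracting only at the reduced rate $\mu-2\rho$.
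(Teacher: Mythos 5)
Your proof is correct, and for the A3-i case it is essentially the paper's argument: expand $|V_{n+1}^{i,\lambda,c}|^2$, kill the Gaussian cross terms by conditioning, and exploit the two features of coordinate-wise taming that make the recursion close on the single pair $V^{i}$ --- the growth bound $|h^{\theta}_{\lambda,c}(\theta,x)|^2\le 2\mu^2|\theta|^2+2d^{\theta}\lambda^{-1}$ with no $x$-dependence, and the tamed coordinate-wise dissipativity inherited from A3-i, whose lower bound is uniform in the other argument so that the particle average collapses --- then iterate the resulting contraction $(1-\mu\lambda+2\mu^2\lambda^2)$ under $\lambda<1/(4\mu)$. Where you genuinely diverge is the A3-ii case. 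The paper stays with the single-pair quantity $|V_n^{i,\lambda,c}|^2$ and, after invoking A3-ii, absorbs the particle-averaged cross term $\tfrac{2\lambda\rho}{N}\sum_{i'}|X_{n}^{i',\lambda,c}|^2$ directly into $(1-\lambda\mu+2\lambda\rho+2\lambda^2\mu^2)|V_n^{i,\lambda,c}|^2$, which implicitly identifies $\tfrac1N\sum_{i'}|X_n^{i',\lambda,c}|^2$ with $|X_n^{i,\lambda,c}|^2$; this is where its sharper step-size constraint $\lambda<1/(8\mu)$ and the reduced rate enter. You instead run the estimate first on the rescaled aggregate $|Z_n^{\lambda,c}|^2$, where the two $\rho$-cross terms genuinely cancel against the diagonal dissipation to give the rate $\mu-2\rho>\mu/2$, and then feed the resulting uniform bound on $\mathbb{E}[|\theta_n^{\lambda,c}|^2]$ back into the $X^{i}$-recursion as an external forcing term. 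Your two-stage route costs an extra iteration but handles the particle-averaged cross term honestly rather than by identification, and it arrives at the same constraint $\lambda<1/(8\mu)$ and the same dimensional dependence $(1+d^{\theta}+d^{x})$; the only cosmetic difference is that your constant carries the extra factor from the second iteration pass.
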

\begin{proof} Consider the rescaled iterates as described in equation (34), which expands as
    \begin{align*}
\left|V_{n+1}^{i,\lambda,c}\right|^2&=\left|\theta_{n+1}^{\lambda,c}\right|^2+ \left|X_{n+1}^{i,\lambda,c}\right|^2\\ 
&=\left|\theta_{n}^{\lambda,c}-\dfrac{\lambda}{N}\sum_{i=1}^N h^{\theta}_{\lambda,c}(\theta_{n}^{\lambda,c},X_{n}^{i,\lambda,c})\right|^2+\dfrac{2\lambda}{N}|\xi^{(0)}_{n+1}|^2\\&+2\sqrt{\dfrac{2\lambda}{N}}\left\langle \theta_{n}^{\lambda,c}-\dfrac{\lambda}{N}\sum_{i=1}^N h^{\theta}_{\lambda,c}(\theta_{n}^{\lambda,c},X_{n}^{i,\lambda,c}),\xi_{n+1}^{(0)}\right\rangle\\
&+\left|X_{n}^{i,\lambda,c}-\lambda h^{x}_{\lambda,c}(\theta_{n}^{\lambda,c},X_{n}^{i,\lambda,c})\right|^2+2\lambda|\xi^{(i)}_{n+1}|^2\\&+2\sqrt{2\lambda}\left\langle X_{n}^{i,\lambda,c}-\lambda h^{x}_{\lambda,c}(\theta_{n}^{\lambda,c},X_{n}^{i,\lambda,c}),\xi_{n+1}^{(i)}\right\rangle.
\end{align*}
Taking the conditional expectation on both sides with respect to the filtration generated by $V_{n}^{i,\lambda,c}$, the cross terms vanish to $0$, leading to
\begin{align*}
\mathbb{E}\left[\left|V_{n+1}^{i,\lambda,c}\right|^2|V_{n}^{i,\lambda,c}\right]&=\mathbb{E}\left[\left|\theta_{n}^{\lambda,c}\right|^2|V_{n}^{i,\lambda,c}\right]-\dfrac{2\lambda}{N}\sum_{i=1}^N\mathbb{E}\left[\langle \theta_n^{\lambda,c},h_{\lambda,c}^{\theta}(\theta_{n}^{\lambda,c},X_{n}^{i,\lambda,c})\rangle |V_{n}^{i,\lambda,c}\right]\\
&+\dfrac{\lambda^2}{N^2}\mathbb{E}\left[\left|\sum_{i=1}^Nh_{\lambda,c}^{\theta}(\theta_{n}^{\lambda,c},X_{n}^{i,\lambda,c})\right|^2|V_{n}^{i,\lambda,c}\right]+\dfrac{2\lambda d^{\theta}}{N}\\
&+\mathbb{E}\left[\left|X_{n}^{i,\lambda,c}\right|^2|V_{n}^{i,\lambda,c}\right]-2\lambda\mathbb{E}\left[\langle X_n^{i,\lambda,c},h_{\lambda,c}^{x}(\theta_{n}^{\lambda,c},X_{n}^{i,\lambda,c})\rangle|V_{n}^{i,\lambda,c}\right]\\&+{\lambda}^2\mathbb{E}\left[|h_{\lambda,c}^x(\theta_{n}^{\lambda,c},X_{n}^{i,\lambda,c})|^2|V_{n}^{i,\lambda,c}\right]+2\lambda d^x.
\end{align*}
Furthermore, by using the elementary inequality $(t_1+\ldots+t_m)^p\leq m^{p-1}(t_1^p+\ldots,t_m^p)$ and recognizing that all expressions within the conditional expectations are measurable, we obtain
\begin{align*}
\mathbb{E}\left[\left|V_{n+1}^{i,\lambda,c}\right|^2|V_{n}^{i,\lambda,c}\right]&\leq |\theta_{n}^{\lambda,c}|-\dfrac{2\lambda}{N}\sum_{i=1}^N\langle \theta_n^{\lambda,c},h_{\lambda,c}^{\theta}(\theta_{n}^{\lambda,c},X_{n}^{i,\lambda,c})\rangle+\dfrac{\lambda^2}{N}\sum_{i=1}^N|h_\lambda^{\theta}(\theta_{n}^{\lambda,c},X_{n}^{i,\lambda,c})|^2+\dfrac{2\lambda d^{\theta}}{N} \\ &+|X_n^{i,\lambda,c}|^2-2\lambda\langle X_n^{i,\lambda,c},h_{\lambda,c}^{x}(\theta_{n}^{\lambda,c},X_{n}^{i,\lambda,c}) \rangle+{\lambda}^2|h_{\lambda,c}^{x}(\theta_{n}^{\lambda,c},X_{n}^{i,\lambda,c})|^2+2\lambda d^x\\
&\leq \left|V_{n}^{i,\lambda,c}\right|^2-\dfrac{2\lambda}{N}\sum_{i=1}^N\sum_{j=1}^{d^{\theta}}\theta_{n,j}^{\lambda}\cdot h^{\theta_j}_{\lambda,c}(\theta_n^{\lambda,c},X_n^{i,\lambda,c})\\
&+\dfrac{\lambda^2}{N}\sum_{i=1}^N\sum_{j=1}^{d^{\theta}}|h_{\lambda,c}^{\theta_j}(\theta_{n}^{\lambda,c},X_{n}^{i,\lambda,c})|^2-2\lambda\sum_{j=1}^{d^{x}}X_{n,j}^{i,\lambda}\cdot h^{x_j}_{\lambda,c}(\theta_n^{\lambda,c},X_n^{i,\lambda,c})\\
&+\lambda^2\sum_{j=1}^{d^{x}}|h_{\lambda,c}^{x_j}(\theta_{n}^{\lambda,c},X_{n}^{i,\lambda,c})|^2+2\lambda(d^{\theta}/N+d^x)\\
&\leq \left|V_{n}^{i,\lambda,c}\right|^2-\dfrac{2\lambda}{N}\sum_{i=1}^N\sum_{j=1}^{d^{\theta}}\left(\dfrac{\mu}{2}|\theta^{\lambda,c}_{n,j}|^2-b\right)\\
&+\dfrac{\lambda^2}{N}\sum_{i=1}^N\sum_{j=1}^{d^{\theta}}\left(2\mu^2|\theta^{\lambda,c}_{n,j}|^2+2\lambda^{-1}\right)-2\lambda\sum_{j=1}^{d^{x}}\left(\dfrac{\mu}{2}|X^{i,\lambda,c}_{n,j}|^2-b\right)\\&+\lambda^2\sum_{j=1}^{d^{x}}\left(2\mu^2|X^{i,\lambda,c}_{n,j}|^2+2\lambda^{-1}\right)+2\lambda(d^{\theta}/N+d^x).
\end{align*}
Proceeding to execute the summations, we arrive at
\begin{align*}
\mathbb{E}\left[\left|V_{n+1}^{i,\lambda,c}\right|^2|V_{n}^{i,\lambda,c}\right]&\leq \left|V_{n}^{i,\lambda,c}\right|^2-\lambda\mu|\theta^{\lambda,c}_{n}|^2-\lambda\mu|X^{i,\lambda,c}_{n}|^2+2\lambda^2\mu^2|\theta^{\lambda,c}_{n}|^2\\
&+2\lambda^2\mu^2|X^{i,\lambda,c}_{n}|^2+2\lambda b(d^{\theta}+d^x)+2\lambda(d^{\theta}+d^x)+2\lambda(d^{\theta}/N+d^x)\\
&\leq (1-\lambda\mu+2\lambda^2\mu^2)\left|V_{n}^{i,\lambda,c}\right|^2+4\lambda(b+1)(d^{\theta}+d^x).
\end{align*}
Now, consider the restriction $\lambda\mu<1/4$, which implies
\begin{align*}
\mathbb{E}\left[\left|V_{n+1}^{i,\lambda,c}\right|^2\right]&\leq (1-\lambda\mu/2)\mathbb{E}\left[\left|V_{n}^{i,\lambda,c}\right|^2\right]+2\lambda(b+1)(d^{\theta}+d^x)+2\lambda(d^{\theta}/N+d^x).
\end{align*}
By iterating the above bound, we finally deduce 
\begin{align*}
\mathbb{E}\left[\left|V_{n+1}^{i,\lambda,c}\right|^2\right]&\leq (1-\lambda\mu/2)^n\left|V_{0}^{i,\lambda,c}\right|^2+\dfrac{1-(1-\lambda\mu/2)^n}{\lambda\mu/2}4\lambda(b+1)(d^{\theta}+d^x)\\
&\leq \left(\left|V_{0}\right|^2+ (8/\mu)(b+1)\right)(1+d^{\theta}+d^x).
\end{align*}
To establish the second part of the Lemma, we repeat the same arguments up to the point where \hyperlink{Assum3a}{A3-i} is invoked. This leads to
\begin{align*}
    \mathbb{E}\left[\left|V_{n+1}^{i,\lambda,c}\right|^2|V_{n}^{i,\lambda,c}\right]&\leq \left|V_{n}^{i,\lambda,c}\right|^2-\dfrac{2\lambda}{N}\sum_{i=1}^N\sum_{j=1}^{d^{\theta}}\left(\dfrac{\mu}{2}|\theta^{\lambda,c}_{n,j}|^2-\rho|X_{n,j}^{i,\lambda,c}|^2-b\right)\\&+ \dfrac{\lambda^2}{N}\sum_{i=1}^N\sum_{j=1}^{d^{\theta}}\left(2\mu^2|\theta^{\lambda,c}_{n,j}|^2+2\lambda^{-1}\right)\\&-2\lambda\sum_{j=1}^{d^{x}}\left(\dfrac{\mu}{2}|X^{i,\lambda,c}_{n,j}|^2-\rho|\theta^{\lambda,c}_{n,j}|^2-b\right)\\&+\lambda^2\sum_{j=1}^{d^{x}}\left(2\mu^2|X^{i,\lambda,c}_{n,j}|^2+2\lambda^{-1}\right)+2\lambda(d^{\theta}/N+d^x).\\
    &\leq \left(1-\lambda\mu+2\lambda\rho+2\lambda^2\mu^2\right)\left|V_{n}^{i,\lambda,c}\right|^2+4\lambda(b+1)(d^x+d^{\theta})
\end{align*}
Finally, according to \hyperlink{Assum3b}{A3-ii}, since $4\rho<\mu$ implies $2\rho-\mu<-\mu/2$, and considering the constrain $\lambda\mu<1/8$ we derive
\begin{align*}
    \mathbb{E}\left[\left|V_{n+1}^{i,\lambda,c}\right|^2\right]&\leq (1-\lambda\mu/4)\mathbb{E}\left[\left|V_{n}^{i,\lambda,c}\right|^2\right]+4\lambda(b+1)(d^x+d^{\theta})\\
    &\leq \left(|V_0|^2+(16/\mu)(b+1)\right)\left(1+d^x+d^{\theta}\right).
\end{align*}
\end{proof}
\noindent \begin{lemma} \label{lemma6}Let \hyperlink{Assum1}{A1}, \hyperlink{Assum3a}{A3-i} and \hyperlink{Assum4}{A4} hold. Then, for any $0<\lambda<1/(4\mu)$ and $q\in[2,2\ell+1)\cap\mathbb{N}$, it holds that, 
$$\mathbb{E}\left[|V_{n}^{i,\lambda,c}|^{2q}\right]\leq C_{|z_0|,\mu,b,p}(1+d^{\theta}+d^x)^q,$$
for a constant $C>0$ independent of $N,n,\lambda, d^x \text{and } d^{\theta}$, $\forall i\in\{1,\ldots,N\}$. If assumption \hyperlink{Assum3b}{A3-ii} is used in place of \hyperlink{Assum3a}{A3-i}, the lemma holds under the updated timestep constraint $\lambda<1/(8\mu)$.\end{lemma}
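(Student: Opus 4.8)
The plan is to replicate the moment-pumping argument of Lemma~\ref{lemma2}, but applied directly to the unscaled pair $V_n^{i,\lambda,c}=(\theta_n^{\lambda,c},X_n^{i,\lambda,c})$ rather than to a rescaled variable, since the particle symmetry exploited in the uniform case is unavailable here and each pair must be tracked on its own. Following the notation of Lemma~\ref{lemma2}, I would write
\[
\Delta_n^{\lambda,\theta}=\theta_n^{\lambda,c}-\frac{\lambda}{N}\sum_{j=1}^N h^\theta_{\lambda,c}(\theta_n^{\lambda,c},X_n^{j,\lambda,c}),\qquad
\Delta_n^{\lambda,x,i}=X_n^{i,\lambda,c}-\lambda h^x_{\lambda,c}(\theta_n^{\lambda,c},X_n^{i,\lambda,c}),
\]
put $G_n^{\lambda,\theta}=\sqrt{2\lambda/N}\,\xi_{n+1}^{(0)}$, $G_n^{\lambda,x,i}=\sqrt{2\lambda}\,\xi_{n+1}^{(i)}$, and set $A_n^{\lambda,i}:=|\Delta_n^{\lambda,\theta}|^2+|\Delta_n^{\lambda,x,i}|^2$ and $B_n^{\lambda,i}:=2\langle\Delta_n^{\lambda,\theta},G_n^{\lambda,\theta}\rangle+2\langle\Delta_n^{\lambda,x,i},G_n^{\lambda,x,i}\rangle+|G_n^{\lambda,\theta}|^2+|G_n^{\lambda,x,i}|^2$, so that $|V_{n+1}^{i,\lambda,c}|^{2q}=(A_n^{\lambda,i}+B_n^{\lambda,i})^q$. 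The one-step computation already performed in the proof of Lemma~\ref{lemma4} --- using the coordinate-wise dissipativity \hyperlink{Assum3a}{A3-i} (so that $\langle\theta_n^{\lambda,c},\tfrac1N\sum_j h^\theta_{\lambda,c}(\theta_n^{\lambda,c},X_n^{j,\lambda,c})\rangle\ge\tfrac\mu2|\theta_n^{\lambda,c}|^2-b$ and likewise for the $x$-block), the linear growth Property~\hyperlink{Property1}{1}, and Cauchy--Schwarz --- gives, for $\lambda<1/(4\mu)$, the deterministic contraction $A_n^{\lambda,i}\le(1-\lambda\mu/2)|V_n^{i,\lambda,c}|^2+\lambda C(1+d^\theta+d^x)$; this bound, being $\sigma(V_n^{i,\lambda,c})$-measurable, I would reuse wholesale.

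Next I would binomially expand $(A_n^{\lambda,i}+B_n^{\lambda,i})^q\le (A_n^{\lambda,i})^q+2q(A_n^{\lambda,i})^{q-1}B_n^{\lambda,i}+\sum_{k=2}^q\binom{q}{k}(A_n^{\lambda,i})^{q-k}(B_n^{\lambda,i})^k$ and take conditional expectations given $\mathcal{F}_n=\sigma(\theta_n^{\lambda,c},X_n^{1,\lambda,c},\dots,X_n^{N,\lambda,c})$. All inner products in $B_n^{\lambda,i}$ average to zero, so $\mathbb{E}[B_n^{\lambda,i}\mid\mathcal{F}_n]=2\lambda(d^\theta/N+d^x)$; raising the one-step bound to powers $m\le q$ via $(r+s)^m\le(1+\varepsilon)^{m-1}r^m+(1+1/\varepsilon)^{m-1}s^m$ with $\varepsilon=\lambda\mu/4$ gives $(A_n^{\lambda,i})^m\le r_m^\lambda|V_n^{i,\lambda,c}|^{2m}+w_m^\lambda$ with $r_m^\lambda=(1-\lambda\mu/4)^{m-1}(1-\lambda\mu/2)$ and $w_m^\lambda=O(\lambda)$; and Gaussianity gives $\mathbb{E}[|G_n^{\lambda,\cdot}|^{2j}\mid\mathcal{F}_n]\le(2j)!!\,(2\lambda d)^j$, $d:=d^\theta+d^x$. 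The decisive simplification relative to Lemma~\ref{lemma2} is that $B_n^{\lambda,i}$ involves only the two Gaussian vectors $G_n^{\lambda,\theta},G_n^{\lambda,x,i}$ rather than the whole family $\{G_n^{\lambda,x,j}\}_{j=1}^N$, so no multinomial expansion over the particles is needed and one obtains directly $\mathbb{E}[|B_n^{\lambda,i}|^2\mid\mathcal{F}_n]\le 16(2\lambda d)A_n^{\lambda,i}+24(2\lambda d)^2$ and $\mathbb{E}[|B_n^{\lambda,i}|^q\mid\mathcal{F}_n]\le C_q(2\lambda d)^{q/2}(A_n^{\lambda,i})^{q/2}+C_q(2\lambda d)^q$.

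Assembling these estimates, every contribution other than the leading $r_q^\lambda|V_n^{i,\lambda,c}|^{2q}$ is either $O(\lambda)$ or a positive power of $\lambda d$ times a strictly lower power of $|V_n^{i,\lambda,c}|$. I would then split on the event $\{|V_n^{i,\lambda,c}|\ge R_q\}$ with a threshold $R_q\asymp\sqrt{8d/\mu}\,\{(2q)!!\,q(q-1)2^{4q-5}\}^{1/2}$: there the lower-order terms are absorbed into the leading one (using $\lambda\mu\le1$), shrinking its coefficient to $1-\lambda\mu/2$, while on the complement $|V_n^{i,\lambda,c}|$ is deterministically bounded, so in both cases
\[
\mathbb{E}\!\left[|V_{n+1}^{i,\lambda,c}|^{2q}\mid\mathcal{F}_n\right]\le(1-\lambda\mu/2)\,|V_n^{i,\lambda,c}|^{2q}+\lambda\,C'_{q,\mu,b},
\]
with $C'_{q,\mu,b}$ an explicit constant built from double factorials of $q$ and powers of $8/\mu$. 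Taking full expectations and iterating the geometric recursion yields $\mathbb{E}[|V_n^{i,\lambda,c}|^{2q}]\le C_{|z_0|,\mu,b,p}(1+d^\theta+d^x)^q$. The main obstacle I anticipate is purely bookkeeping: keeping the combinatorial constants and the threshold $R_q$ under control so that the absorption is legitimate for every $q$ in the stated range, and verifying that the resulting constant is genuinely free of $N$.

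Finally, for the variant under \hyperlink{Assum3b}{A3-ii} the only change is that the one-step bound inherited from Lemma~\ref{lemma4} picks up the cross terms: using $h^{\theta_k}_{\lambda,c}(v)\theta_k\ge\tfrac\mu2|\theta_k|^2-\rho|x_k|^2-b/d^\theta$ and its $x$-analogue, the averaged $\theta$-block produces an extra term $\tfrac{2\lambda\rho}{N}\sum_{j=1}^N|X_n^{j,\lambda,c}|^2$. After taking full expectations this is handled by Jensen's inequality together with exchangeability of the particles $(X_n^{j,\lambda,c})_{j}$, which bounds the relevant mixed moments by $\mathbb{E}[|V_n^{i,\lambda,c}|^{2q}]$; using $4\rho<\mu$ the recursion then closes with contraction rate $\mu/4$ in place of $\mu/2$ under the tightened restriction $\lambda<1/(8\mu)$, exactly as in Lemma~\ref{lemma4}.
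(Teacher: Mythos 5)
Your proposal follows essentially the same route as the paper's proof: the identical $A_n^{\lambda,i}/B_n^{\lambda,i}$ decomposition, the binomial expansion with the one-step contraction inherited from the second-moment lemma, the observation that only the two Gaussians $G_n^{\lambda,\theta},G_n^{\lambda,x,i}$ appear (so no multinomial expansion over particles is needed), the event-splitting on a threshold $R_q\asymp\sqrt{8d/\mu}$ times a combinatorial factor, and the final geometric recursion; the treatment of \hyperlink{Assum3b}{A3-ii} via $4\rho<\mu$ and $\lambda<1/(8\mu)$ also matches. Your explicit appeal to exchangeability of the particles to close the cross-term $\tfrac{2\lambda\rho}{N}\sum_j|X_n^{j,\lambda,c}|^2$ after taking full expectations is, if anything, slightly more careful than the paper's presentation, but it is not a different argument.
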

\begin{proof}
Similarly to the proof of Lemma \hyperlink{lemma2}{3}, we define the following auxiliary quantities
\begin{gather*}
A_n^{\lambda}:=|\Delta_n^{\lambda,\theta}|^2+|\Delta_n^{\lambda,i,x}|^2,\\
B^{\lambda}_n=2\langle \Delta_n^{\theta},G_n^{\lambda,\theta}\rangle+2\langle \Delta_n^{\lambda,x,i},G_n^{\lambda,x,i}\rangle+|G_n^{\lambda,\theta}|^2+|G_n^{\lambda,x,i}|^2.
\end{gather*}
Now, for the 2q-th moment, we write
\begin{align}
|V^{i,\lambda,c}_{n+1}|^{2q}&=\left(A_n^{\lambda}+B^{\lambda}_n\right)^q\nonumber\\
&\leq (A_n^{\lambda})^q+2q(A_n^{\lambda})^{q-1}B^{\lambda}_n+\sum_{k=2}^q\binom{q}{k}|A_n^{\lambda}|^{q-k}|B^{\lambda}_n|^k\label{d1}
\end{align}
We will handle each term separately. For the first time, we have
\begin{align}
\mathbb{E}\left[|A_n^{\lambda}|^q|Z_n^{\lambda}\right]&=(A_n^{\lambda})^{q}\leq \left((1-\lambda\mu/2)\left|V_{n}^{i,\lambda,c}\right|^2+2\lambda(b+1)(d^{\theta}+d^x)\right)^q\nonumber\\
&\leq \left(1+\dfrac{\lambda \mu}{4}\right)^{q-1}\left(1-\dfrac{\lambda\mu}{2}\right)^q|V^{i,\lambda,c}_{n}|^{2q}+\left(1+\dfrac{4}{\lambda\mu }\right)^{q-1}2^q{\lambda}^q C^q\nonumber\\
&\leq \left(1-\dfrac{\lambda\mu}{4}\right)^{q-1}\left(1-\dfrac{\lambda\mu}{2}\right)|V^{i,\lambda,c}_{n}|^{2q}+\left(\lambda+\dfrac{4}{\mu }\right)^{q-1}\lambda (2C)^q\nonumber\\
&\leq r_q^{\lambda}|V^{i,\lambda,c}_{n}|^{2q}+w_q^{\lambda}.\label{d2}
\end{align}
where $r_q^{\lambda}=(1-\lambda\mu/4)^{q-1}(1-\lambda\mu/2)$ and $w_q^{\lambda}=(\lambda+4/\mu)^{q-1}\lambda (2C)^q$.\\
\noindent Next, we handle the second term as follows
\begin{align}
\mathbb{E}\left[2q(A_n^{\lambda})^{q-1}B^{\lambda}_n|V^{i,\lambda,c}_{n}\right]&=2q(A_n^{\lambda})^{q-1}\mathbb{E}\left[B^{\lambda}_n|Z_n^{\lambda}\right]=4q\lambda\left(d^{\theta}/N+d^x\right)(A_n^{\lambda})^{q-1}\nonumber\\
&\leq 4q\lambda\left(d^{\theta}/N+d^x\right) \left(r_{q-1}^{\lambda}|V^{i,\lambda,c}_{n}|^{2(q-1)}+w_{q-1}^{\lambda}\right)\nonumber\\
&\leq  4q\left(2\lambda d\right) \left(r_{q-1}^{\lambda}|V^{i,\lambda,c}_{n}|^{2(q-1)}+w_{q-1}^{\lambda}\right).\label{d3}
\end{align}
The third term on the right-hand side of \eqref{d1} can be expanded further as
\begin{align}
\sum_{k=2}^q\binom{q}{k}|A_n^{\lambda}|^{q-k}|B^{\lambda}_n|^k&=\sum_{\ell=0}^{q-2}\binom{q}{\ell+2}|A_n^{\lambda}|^{q-2-k}|B^{\lambda}_n|^{k+2}\nonumber\\
&=\dfrac{q}{\ell+2}\dfrac{q-1}{\ell+1}\sum_{\ell=0}^{q-2}\binom{q}{\ell}|A_n^{\lambda}|^{q-2-k}|B^{\lambda}_n|^{k}|B^{\lambda}_n|^{2}\nonumber\\
&\leq q(q-1)\left(|A_n^{\lambda}|+|B_n^{\lambda}|\right)^{q-2}|B^{\lambda}_n|^2\nonumber\\
&\leq q(q-1)2^{q-3}|A_n^{\lambda}|^{q-2}|B_n^{\lambda}|^2+q(q-1)2^{q-3}|B_n^{\lambda}|^q.\nonumber\\
&=D+F \label{d4}
\end{align}
Taking the expectation of the above terms, we get
\begin{align*}
\mathbb{E}[D|V^{i,\lambda,c}_{n}]&=q(q-1)2^{q-3}|A_n^{\lambda}|^{q-2}\mathbb{E}\left[|B_n^{\lambda}|^2|V^{i,\lambda,c}_{n}\right],
\end{align*}
where
\begin{align*}
&\mathbb{E}\left[|B_n^{\lambda}|^2|V^{i,\lambda,c}_{n}\right]\\&=\mathbb{E}\left[\left|2\langle \Delta_n^{\theta},G_n^{\lambda,\theta}\rangle+2\langle \Delta_n^{\lambda,x,i},G_n^{\lambda,x,i}\rangle+|G_n^{\lambda,\theta}|^2+|G_n^{\lambda,x,i}|^2\right|^2|V^{i,\lambda,c}_{n} \right]\\
&\leq 4\left(\mathbb{E}\left[4|\Delta_n^{\lambda,\theta}|^2|G_n^{\lambda,\theta}|^2|V^{i,\lambda,c}_{n}\right]+\mathbb{E}\left[4|\Delta_n^{\lambda,x,i}|^2|G_n^{\lambda,x,i}|^2|V^{i,\lambda,c}_{n}\right]\right.\\
&+\left.\mathbb{E}\left[|G_n^{\lambda,x,i}|^4|V^{i,\lambda,c}_{n}\right]+\mathbb{E}\left[|G_n^{\lambda,x,i}|^4|V^{i,\lambda,c}_{n}\right]\right)\\
&\leq 4\left(4(2\lambda d^{\theta}/N)|\Delta_n^{\lambda,\theta}|^2+4(2\lambda d^x)|\Delta_n^{\lambda,x,i}|^2+12(2\lambda d^{\theta}/N)^2+12(2\lambda d^x)^2\right)\\
&\leq 16(2\lambda d)\left(|\Delta_n^{\lambda,\theta}|^2+|\Delta_n^{\lambda,x,i}|^2\right)+96(2\lambda d)^2\\
&\leq  16(2\lambda d)A_n^{\lambda}+96(2\lambda d)^2.
\end{align*}
with $d=\max\{d^\theta/N,d^x\}$. Substituting this result for the term D, we derive
\begin{align*}
\mathbb{E}[D|V^{i,\lambda,c}_{n}]&=q(q-1)2^{q-3}|A_n^{\lambda}|^{q-2}\left(16(2\lambda d)A_n^{\lambda}+96(2\lambda d)^2\right)\\
&=16(2\lambda d)q(q-1)2^{q-3} \left(r_{q-1}^{\lambda}|V_n^{\lambda}|^{2(q-1)}+w_{q-1}^{\lambda}\right)\\&+96(2\lambda d)^2q(q-1)2^{q-3} \left(r_{q-2}^{\lambda}|V_n^{\lambda}|^{2(q-2)}+w_{q-2}^{\lambda}\right).
\end{align*}
Additionally
\begin{align*}
&\mathbb{E}[F|V^{i,\lambda,c}_{n}]=q(q-1)2^{q-3}|B_n^{\lambda}|^q\\
&=q(q-1)2^{q-3}\mathbb{E}\left[\left|2\langle \Delta_n^{\theta},G_n^{\lambda,\theta}\rangle+2\langle \Delta_n^{\lambda,x,i},G_n^{\lambda,x,i}\rangle+|G_n^{\lambda,\theta}|^2+|G_n^{\lambda,x,i}|^2\right|^q|V^{i,\lambda,c}_{n} \right]\\
&\leq q(q-1)2^{q-3}4^{q-1}\left(\mathbb{E}\left[2^q|\Delta_n^{\lambda,\theta}|^q|G_n^{\lambda,\theta}|^q|V^{i,\lambda,c}_{n}\right]+\mathbb{E}\left[2^q  |\Delta_n^{\lambda,x,i}|^q|G_n^{\lambda,x,i}|^q|V^{i,\lambda,c}_{n}\right]\right.\\
&\left.+\mathbb{E}\left[|G_n^{\lambda,\theta}|^{2q}|V^{i,\lambda,c}_{n}\right]+\mathbb{E}\left[|G_n^{\lambda,x,i}|^{2q}|V^{i,\lambda,c}_{n}\right]\right)\\
&\leq q(q-1)2^{3q-5}\left(2^q(2\lambda d^{\theta}/N)^{q/2}q!!\left(|\Delta_n^{\lambda,\theta}|^2\right)^{q/2}+2^q q!!(2\lambda d^x)^{q/2}\left(|\Delta_n^{\lambda,x,i}|^2\right)^{q/2}\right.\\
&\left.+(2q)!!(2\lambda d^{\theta}/N)^{q}+(2q)!!(2\lambda d^{x})^{q}\right)\\
&\leq (q(q-1))^{q+1}2^{4q-5}\left((2\lambda d)^{q/2}\left(|\Delta_n^{\lambda,\theta}|^2+ |\Delta_n^{\lambda,x,i}|^2\right)^{q/2}+2(2\lambda d)^q\right)\\
&\leq (q(q-1))^{q+1}2^{4q-5}\left((2\lambda d)^{q/2} \left|A_n^{\lambda}\right|^{q/2}+2(2\lambda d)^q\right)\\
&\leq (q(q-1))^{q+1}2^{4q-5}(2\lambda d)^{q/2}\left(r^{\lambda}_{q/2}|V_n^{\lambda}|^q+w^{\lambda}_{q/2}\right)\\
&+(q(q-1))^{q+1}2^{4q-5}2(2\lambda d)^q.
\end{align*}
Combining the above results in view of \eqref{d4}, yields
\begin{align}
&\mathbb{E}\left[\sum_{k=2}^q\binom{q}{k}|A_n^{\lambda}|^{q-k}|B^{\lambda}_n|^k\right]\nonumber\\
&\leq 16(2\lambda d)q(q-1)2^{q-3} \left(r_{q-1}^{\lambda}|V_n^{\lambda}|^{2(q-1)}+w_{q-1}^{\lambda}\right)\nonumber\\&+96(2\lambda d)^2q(q-1)2^{q-3} \left(r_{q-2}^{\lambda}|V_n^{\lambda}|^{2(q-2)}+w_{q-2}^{\lambda}\right)\nonumber\\
&+ (q(q-1))^{q+1}2^{4q-5}(2\lambda d)^{q/2}\left(r^{\lambda}_{q/2}|V_n^{\lambda}|^q+w^{\lambda}_{q/2}\right)\nonumber\\
&+(q(q-1))^{q+1}2^{4q-4}(2\lambda d)^q.\label{d5}
\end{align}
Substituting \eqref{d2},\eqref{d3} and \eqref{d5} into \eqref{d1} gives the following
\begin{align}
\mathbb{E}\left[|V^{i,\lambda,c}_{n+1}|^{2q}|V^{i,\lambda,c}_{n}\right]&\leq r_q^{\lambda}|V^{i,\lambda,c}_{n}|^{2q}+w_q^{\lambda}\nonumber\\
&+4q\left(2\lambda d\right)(1+4(q-1)2^{q-3}) \left(r_{q-1}^{\lambda}|V^{i,\lambda,c}_{n}|^{2(q-1)}+w_{q-1}^{\lambda}\right)\nonumber\\
&+96(2\lambda d)^2q(q-1)2^{q-3} \left(r_{q-2}^{\lambda}|V^{i,\lambda,c}_{n}|^{2(q-2)}+w_{q-2}^{\lambda}\right)\nonumber\\
&+ (q(q-1))^{q+1}2^{4q-5}(2\lambda d)^{q/2}\left(r^{\lambda}_{q/2}|V^{i,\lambda,c}_{n}|^q+w^{\lambda}_{q/2}\right)\nonumber\\
&+(q(q-1))^{q+1}2^{4q-4}(2\lambda d)^q.\label{d6}
\end{align}
Now, consider $$|V^{i,\lambda,c}_{n}|\geq \sqrt{8d/\mu}\left\{2(q(q-1))^{q+1}2^{4(q-1)}\right\}^{1/2}\geq \sqrt{8d/\mu}\left\{2(q(q-1))^{q+1}2^{4(q-1)}\right\}^{1/q}.$$ From this, we observe
\begin{align*}
&\mathbb{E}\left[|V^{i,\lambda,c}_{n+1}|^{2q}|V^{i,\lambda,c}_{n}\right]\\&\leq r_q^{\lambda}|V_n^{i,\lambda,c}|^{2q}+w_q^{\lambda}+\dfrac{\lambda\mu}{2\cdot 4}|V^{i,\lambda,c}_{n}|^2\left(r_{q-1}^{\lambda}|V^{i,\lambda,c}_{n}|^{2(q-1)}\right)\\
&+\dfrac{(\lambda\mu)^2}{2\cdot 4^2}|V^{i,\lambda,c}_{n}|^4\left(r_{q-2}^{\lambda}|V^{i,\lambda,c}_{n}|^{2(q-2)}\right)\\&+\dfrac{(\lambda\mu)^{q/2}}{2\cdot 4^{q/2}}|V_n^{i,\lambda,c}|^{q}\left(r^{\lambda}_{q/2}|V_n^{i,\lambda,c}|^q\right)+(q(q-1))^{q+1}2^{4(q-1)}(2\lambda d)^q\\&+(q(q-1))^{q+1}2^{4(q-1)}\left((2\lambda d)w_{q-1}^{\lambda}+(2\lambda d)^2w_{q-2}^{\lambda}+(2\lambda d)^{q/2}w_{q/2}^{\lambda}\right)\\
&\leq r^{\lambda}_{q/2}\left(\left(1-\dfrac{\lambda\mu}{4}\right)^{q/2}+\dfrac{1}{2}\left(\dfrac{\lambda\mu}{4}\right)\left(1-\dfrac{\lambda\mu}{4}\right)^{q/2-1}\right.\\
&\left.+\dfrac{1}{2}\left(\dfrac{\lambda\mu}{4}\right)^{2}\left(1-\dfrac{\lambda\mu}{4}\right)^{q/2-2}+\dfrac{1}{2}\left(\dfrac{\lambda\mu}{4}\right)^{q/2}\right)|V_n^{i,\lambda,c}|^{2q}\\
&+w_q^{\lambda}+(q(q-1))^{q+1}2^{4(q-1)}(2\lambda d)^q\\
&+(q(q-1))^{q+1}2^{4(q-1)}\left((2\lambda d)w_{q-1}^{\lambda}+(2\lambda d)^2w_{q-2}^{\lambda}+(2\lambda d)^{q/2}w_{q/2}^{\lambda}\right).
\end{align*}
Using the fact that $\lambda\mu\leq 1$, we further simplify the bound
\begin{align*}
&\mathbb{E}\left[|V^{i,\lambda,c}_{n+1}|^{2q}|V^{i,\lambda,c}_{n}\right]\\&\leq \left(1-\dfrac{\lambda\mu}{2}\right)|V^{i,\lambda,c}_{n}|^{2q}\\
&+w_q^{\lambda}+(q(q-1))^{q+1}2^{4(q-1)}(2\lambda d)^q\\
&+(q(q-1))^{q+1}2^{4(q-1)}\left((2\lambda d)w_{q-1}^{\lambda}+(2\lambda d)^2w_{q-2}^{\lambda}+(2\lambda d)^{q/2}w_{q/2}^{\lambda}\right)\\
&\leq   \left(1-\dfrac{\lambda\mu}{2}\right)|V^{i,\lambda,c}_{n}|^{2q}+(q(q-1))^{q+1}2^{4(q-1)}10^qC^q(d\lambda)^q.
\end{align*}
Consequently, on $\{|V^{i,\lambda,c}_{n}|\leq \sqrt{8d/\mu}\left\{2(q(q-1))^{q+1}2^{4(q-1)}\right\}^{1/2}\}$, we derive
\begin{align*}
\mathbb{E}\left[|V^{i,\lambda,c}_{n+1}|^{2q}|V^{i,\lambda,c}_{n}\right]&\leq  \left(1-\dfrac{\lambda\mu}{2}\right)|V^{i,\lambda,c}_{n}|^{2q}\\&+(q(q-1))^{q+1}2^{4(q-1)}10^qC^q(d\lambda)^q\\&+3\left(q(q-1)^{q+1}2^{4(q-1)}\right)^q(2d)^q\left(\dfrac{4}{\mu}\right)^q2^{q-1}.
\end{align*}
Thus, the bound provided in \eqref{d6} is improved to
\begin{align*}
\mathbb{E}\left[|V^{i,\lambda,c}_{n+1}|^{2q}\right]&\leq  \left(1-\dfrac{\lambda\mu}{2}\right)\mathbb{E}\left[|V^{i,\lambda,c}_{n}|^{2q}\right]+C(d,q,\lambda,\mu),
\end{align*}
Hence, the usual bound for recursive sequences follows, completing the proof.
\end{proof}
\subsubsection{Proof of Proposition 6}\label{Proof6}
\begin{proof}
    We begin by considering the standard decomposition of the difference between the interpolation (35) and the scaled dynamics (36). By applying the It\^{o}'s formula to $x\to |x|^2$, we obtain
\begin{gather*}
|\overline{Z}^{\lambda,c}_t-\mathcal{Z}_{\lambda t}^{c,N}|^2=|\overline{\theta}^{\lambda,c}_t-{\vartheta}_{\lambda t}^{c,N}|^2+\dfrac{1}{N}\sum_{i=1}^N|\overline{X}^{i,\lambda,c}_t-\mathcal{X}_{\lambda t}^{i,c,N}|^2\\
=-2\lambda\int_0^t\left\langle \dfrac{1}{N}\sum_{i=1}^N h^{\theta}_{\lambda,c}(\theta_{\lfloor{t}\rfloor}^{\lambda,c},X^{i,\lambda,c}_{\lfloor{t}\rfloor})-\dfrac{1}{N}\sum_{i=1}^Nh^{\theta}({\vartheta}_{\lambda t}^{c,N},\mathcal{X}_{\lambda t}^{i,c,N}),\overline{{\theta}}_{t}^{i,\lambda,c}-{\vartheta}_{\lambda t}^{c,N}\right\rangle ds\\
-\dfrac{2\lambda}{N}\sum_{i=1}^N\int_0^t\left\langle h^{x}_{\lambda,c}(\theta_{\lfloor{t}\rfloor}^{\lambda,c},X^{i,\lambda,c}_{\lfloor{t}\rfloor})-h^{x}({\vartheta}_{\lambda t}^{c,N},\mathcal{X}_{\lambda t}^{i,c,N}), \overline{X}_{t}^{i,\lambda,c}-\mathcal{X}_{\lambda t}^{i,c,N}\right\rangle ds\\
=-\dfrac{2\lambda}{N}\sum_{i=1}^N\int_0^t \langle h_{\lambda,c}(V_{\lfloor{t}\rfloor}^{i,\lambda,c})-h(\mathcal{V}_{\lambda t}^{i,c,N}) , \overline{V}_{t}^{i,\lambda,c}-\mathcal{V}_{\lambda t}^{i,c,N}\rangle ds.
\end{gather*}
In order to make the forthcoming calculations more readable, we define the following quantities
$$e^i_t=\overline{V}_{t}^{i,\lambda,c}-\mathcal{V}_{\lambda t}^{i,c,N}\ \text{and}\ e_t=\overline{Z}^{\lambda,c}_t-\mathcal{Z}_{\lambda t}^{c,N}.$$ Hence it follows that $1/N\sum_{i=1}^N |e^i_t|^2=|e_t|^2.$ Taking the derivative of this expression, we arrive at
\begin{align}
\dfrac{d}{dt}|e_t|^2&=-\dfrac{2\lambda}{N}\sum_{i=1}^N\langle  h_{\lambda,c}(V_{\lfloor{t}\rfloor}^{i,\lambda,c})-h(\mathcal{V}_{\lambda t}^{i,c,N}), e_t^i\rangle\nonumber\\
&=-\dfrac{2\lambda}{N}\sum_{i=1}^N\langle  h(\overline{V}_{t}^{i,\lambda,c})-h(\mathcal{V}_{\lambda t}^{i,c,N}), e_t^i\rangle\nonumber\\
&-\dfrac{2\lambda}{N}\sum_{i=1}^N\langle  h_{\lambda,c}(V_{\lfloor{t}\rfloor}^{i,\lambda,c})-h(V_{\lfloor{t}\rfloor}^{i,\lambda,c}), e_t^i\rangle\nonumber\\
&-\dfrac{2\lambda}{N}\sum_{i=1}^N\langle  h(V_{\lfloor{t}\rfloor}^{i,\lambda,c})-h(\overline{V}_{t}^{i,\lambda,c}), e_t^i\rangle\nonumber\\
&=k_1(t)+k_2(t)+k_3(t).\label{f1}
\end{align}
The first term $k_1(t)$ is controlled through the convexity of the initial potential. The taming error $k_2(t)$ is controlled by the properties of the taming function and the additional moment bounds we have already established. These bounds also allow us to control the discretisation error $k_3(t)$. Specifically, we have
\begin{align}
k_1(t)&=-\dfrac{2\lambda}{N}\sum_{i=1}^N\langle  h(\overline{V}_{t}^{i,\lambda,c})-h(\mathcal{V}_{\lambda t}^{i,c,N}), e_t^i\rangle\leq-\dfrac{2\lambda\mu}{N}\sum_{i=1}^N |e_t^i|^2=-2\lambda\mu|e_t|^2.\label{f2}
\end{align}
By applying the Cauchy-Schwarz inequality and the $\epsilon$-Young inequality with $\epsilon=\mu/2$, we get
\begin{align}
k_2(t)&=-\dfrac{2\lambda}{N}\sum_{i=1}^N\langle  h_{\lambda,c}(V_{\lfloor{t}\rfloor}^{i,\lambda,c})-h(V_{\lfloor{t}\rfloor}^{i,\lambda,c}), e_t^i\rangle\leq\dfrac{2\lambda}{N}\sum_{i=1}^N\left(\dfrac{1}{\mu}|h_{\lambda,c}(V_{\lfloor{t}\rfloor}^{i,\lambda,c})-h(V_{\lfloor{t}\rfloor}^{i,\lambda,c})|^2+\dfrac{\mu}{4}|e_t^i|^2\right)\nonumber\\
&\leq 2\dfrac{\lambda\mu}{4} |e_t|^2+\dfrac{2\lambda}{\mu N}\sum_{i=1}^N2\lambda C_2^2(1+|V_{\lfloor{t}\rfloor}^{i,\lambda,c}|^{4(\ell+1)})\leq 2\dfrac{\lambda\mu}{4} |e_t|^2+\dfrac{4\lambda^2}{\mu } C_2^2+\dfrac{4\lambda^2}{\mu N}\sum_{i=1}^N|V_{\lfloor{t}\rfloor}^{i,\lambda,c}|^{4(\ell+1)}.\label{f3}
\end{align}
Similarly, for the final term, we derive
\begin{align}
k_3(t)&=-\dfrac{2\lambda}{N}\sum_{i=1}^N\langle  h(V_{\lfloor{t}\rfloor}^{i,\lambda,c})-h(\overline{V}_{t}^{i,\lambda,c}), e_t^i\rangle\leq\dfrac{2\lambda}{N}\sum_{i=1}^N\left(\dfrac{1}{\mu}| h(V_{\lfloor{t}\rfloor}^{i,\lambda,c})-h(\overline{V}_{t}^{i,\lambda,c})|^2+\dfrac{\mu}{4}|e_t^i|^2\right)\nonumber\\
&\leq2\dfrac{\lambda\mu}{4}|e_t|^2+\dfrac{2\lambda L^2}{\mu N}\sum_{i=1}^N \left((1+|V_{\lfloor{t}\rfloor}^{i,\lambda,c}|^{\ell}+|\overline{V}_{t}^{i,\lambda,c}|^{\ell})^2|V_{\lfloor{t}\rfloor}^{i,\lambda,c}-\overline{V}_{t}^{i,\lambda,c}|^2\right).\label{f4}
\end{align}
Now, combining the bounds in \eqref{f2},\eqref{f3} and \eqref{f4} and substituting them in \eqref{f1}, we can take the expectation, which leads to
\begin{align*}
\dfrac{d}{dt}\mathbb{E}\left[|e_t|^2\right]&\leq -\mu\lambda\mathbb{E}\left[|e_t|^2\right]+\dfrac{4\lambda^2C_2^2}{\mu}+\dfrac{4\lambda^2}{\mu}\dfrac{1}{N}\sum_{i=1}^N\mathbb{E}\left[|V_{\lfloor{t}\rfloor}^{i,\lambda,c}|^{4(\ell+1)}\right]\\
&+\dfrac{2\lambda L^2}{\mu}\dfrac{1}{N}\sum_{i=1}^N\mathbb{E}^{1/2}\left[(1+|V_{\lfloor{t}\rfloor}^{i,\lambda,c}|^{\ell}+|\overline{V}_{t}^{i,\lambda,c}|^{\ell})^4\right]\mathbb{E}^{1/2}\left[|V_{\lfloor{t}\rfloor}^{i,\lambda,c}-\overline{V}_{t}^{i,\lambda,c}|^4\right]\\
&\leq -\mu\lambda\mathbb{E}\left[|e_t|^2\right]+\dfrac{4\lambda^2 (C_2^2+C_3)}{\mu}\\&+\dfrac{2\lambda L^2}{\mu}\left(6\max\{1,2C_4\}\right)\lambda\left(\mu C_5+6\sqrt{d\mu}C_6+10d+2\right)\\
&\leq -\mu\lambda\mathbb{E}\left[|e_t|^2\right]+\lambda^2 M(L,\mu,d,\ell)\leq \lambda\left(-\dfrac{\mu}{2}\mathbb{E}\left[|e_t|^2\right]+\lambda M\right).
\end{align*}
Finally, multiplying both sides by the integrating factor $\exp{\left(\lambda\mu t/2\right)}$ and rearranging the terms, we conclude
\begin{gather*}
\dfrac{d}{dt}\left(\exp{(\lambda\mu t/2)}\mathbb{E}\left[|e_t|^2\right]\right)\leq \lambda^2 M \exp{(\lambda\mu t/2)}\\
\mathbb{E}\left[|e_t|^2\right]\leq \lambda \dfrac{2 M}{\mu}.
\end{gather*}
\end{proof}
\section{Verifying Assumptions in Model Examples}
\subsection{Bayesian logistic regression with thin tails}
\begin{remark}\label{remark3}
    Consider the potential $U(\theta,x)$ as given in \eqref{regU}, it satisfies \hyperlink{Assum1}{A1}, i.e. it is locally Lipschitz with polynomial growth.
\end{remark}
\begin{proof}
     It is quite straightforward to compute
\begin{align*}
    |\nabla U(v)-\nabla U(v')|&\leq |\nabla U_x(v)-\nabla U_x(v')|+ |\nabla U_{\theta}(v)-\nabla U_{\theta}(v')|\\
    &\leq \dfrac{8}{\sigma^4}\left(x-\theta-x'+\theta')\right)\left(3/2|x-\theta|^2+3/2|x'-\theta'|^2\right)\\&+\dfrac{4}{\sigma^2}\left(|x-x'|+|\theta-\theta'|\right)+1/4\sum_{j=1}^{d^y}|u_j|^2|x-x'|\\
    &\leq \dfrac{16}{\sigma^4}\left(|v-v'|\right)\left(4|v|^2+4|v'|^2\right)+\left(\dfrac{8}{\sigma^2}+1/4\sum_{j=1}^{d^y}|u_j|^2\right)|v-v'|\\
    &\leq \max\left(\dfrac{64}{\sigma^4},\dfrac{8}{\sigma^2}+1/4\sum_{j=1}^{d^y}|u_j|^2\right)\left(1+|v|^2+|v'|^2\right)|v-v'|.
\end{align*}
We note that the last term in the second inequality follows from the fact that the logistic function is Lipschitz continuous with constant $1/4$.
\end{proof}
\begin{remark}\label{remark4}
    Consider the potential $U(\theta,x)$ as given in \eqref{regU}, it satisfies \hyperlink{Assum2}{A2} with $\mu=2/\sigma^2$, i.e. it is strongly convex.
\end{remark}
\begin{proof}
    First, we compute the second-order partial derivatives as follows
    \begin{align*}
\dfrac{\partial^2 U_y}{\partial \theta_i \theta_r}&=\delta_{ir}\left(\dfrac{12}{\sigma^4}(x_i-\theta_i)^2+\dfrac{2}{\sigma^2}\right),\\
\dfrac{\partial^2 U_y}{\partial x_i x_r}&=\delta_{ir}\left(\dfrac{12}{\sigma^4}(x_i-\theta_i)^2+\dfrac{2}{\sigma^2}\right)+\sum_{j=1}^{dy}s(u_j^Tx)\left(1-s(u_j^Tx)\right)u_{j,i}u_{j,r},\\
\dfrac{\partial^2 U_y}{\partial x_i\theta_r}&=-\delta_{ir}\left(\dfrac{12}{\sigma^4}(x_i-\theta_i)^2+\dfrac{2}{\sigma^2}\right).
\end{align*}
    \noindent We define the diagonal matrix $A$, with elements $a_{ir}=\delta_{ir}\left(12/\sigma^4(x_i-\theta_i)^2+2/\sigma^2\right)$, to write the Hessian of $U(\theta,x)$ in the following form
\begin{align*}H\left(U(\theta,x)\right)&=\begin{bmatrix}
A& -A\\
-A & A
\end{bmatrix}+\left[0,0;0,\sum_{j=1}^{dy}s(u_j^Tx)\left(1-s(u_j^Tx)\right)u_j\otimes u_j\right].
\end{align*} 
From this we observe that the leftmost term is a positive definite matrix due to its diagonal block structure, while the second term is semi-positive definite (see further details in \cite{Kuntz}). Hence $H(U(\theta,x))$ is positive definite, making $U(\theta,x)$ strongly convex with a convexity constant $\mu=2/\sigma^2$. 
\end{proof}
\subsection{Higher order - Toy Example}
\begin{remark}\label{remark5}
    Consider the potential $U(\theta,x)=|x|^{4m}+(|x|^{2m}+1)(|\theta|^{2m}+1)+|\theta|^{4m}$. Then $U(\theta,x)$ satisfies \hyperlink{Assum1}{A1}, i.e. it is locally Lipschitz with polynomial growth and constant $L=2m2^{4m}$.
\end{remark}
\begin{proof}
    We decompose the difference of the gradient of $U$ as follows
\begin{align}
    |\nabla U(v)-\nabla U(v')|\leq|\nabla_x U(\theta,x)-\nabla_x U(x',\theta')|+|\nabla_{\theta} U(\theta,x)-\nabla_{\theta} U(x',\theta')|\label{B1}
\end{align}
Due to the symmetry of $U$ it suffices to consider only the first term incorporating $\nabla_x U$,
\begin{align}
    &|\nabla_x U(\theta,x)-\nabla_x U(x',\theta')|\nonumber\\&=\left|4m|x|^{4m-2}x+2m\left(|\theta|^{2m}+1\right)|x|^{2m-2}x\right.\nonumber\\ 
    &\left.-4m|x'|^{4m-2}x'-2m\left(|\theta'|^{2m}+1\right)|x'|^{2m-2}x'\right|\nonumber\\ 
    &\leq 4m\left||x|^{4m-2}x-|x'|^{4m-2}x'\right|+2m\left|\left(|\theta|^{2m}+1\right)|x|^{2m-2}x\right.\nonumber\\
    &\left.-\left(|\theta'|^{2m}+1\right)|x'|^{2m-2}x'\right|\nonumber\\
    &\leq 4m|x|\left||x|^{4m-2}-|x'|^{4m-2}\right|+4m|x'|^{4m-2}|x-x'|\nonumber\\
    &+2m\left(|\theta|^{2m}+1\right)|x|^{2m-2}|x-x'|\nonumber\\
    &+2m|x'|\left|\left(|\theta|^{2m}+1\right)|x|^{2m-2}-\left(|\theta'|^{2m}+1\right)|x'|^{2m-2}\right|\nonumber\\
    &\leq4m\sum_{i=0}^{4m-3}|x|^{4m-2-i}|x'|^{i}|x-x'|+4m|x'|^{4m-2}|x-x'|\nonumber\\
    &+2m|x|^{2m-2}|x-x'|+2m|v|^{4m-2}|x-x'|\nonumber\\
    &+2m|x'|\left||x|^{2m-2}-|x'|^{2m-2}\right|+2m|x'||\theta|^{2m}\left||x|^{2m-2}-|x'|^{2m-2}\right|\nonumber\\
    &+2m|x'|^{2m-1}\left||\theta|^{2m}-|\theta'|^{2m}\right|\label{B2}.
\end{align}
Next, we define the expression $J_x$ which contains all the terms with common factor $|x-x'|$. Notice that due to the symmetry of the problem, the second term of \eqref{B1} yields only one such term, corresponding to the last term in \eqref{B2}, that is $2m|\theta'|^{2m-1}\left||x|^{2m}-|x'|^{2m}\right|$. Hence $J_x$ is bounded as follows
\begin{align}
    J_x&=2m\left(2\sum_{i=0}^{4m-2}|x|^{4m-2-i}|x'|^{i}+|v|^{4m-2}+\sum_{i=0}^{2m-2}|x|^{2m-2-i}|x'|^{i}\right.\nonumber\\
    &\left.+|\theta|^{2m}\sum_{i=0}^{2m-3}|x|^{2m-3-i}|x'|^{i+1}+|\theta'|^{2m-1}\sum_{i=0}^{2m-1}|x|^{2m-1-i}|x'|^i\right)|x-x'|\nonumber\\
    &\leq2m\left(3\sum_{i=0}^{4m-2}|x|^{4m-2-i}|x'|^{i}+|v|^{4m-2}+\dfrac{m(2m-3)}{2m-1}|\theta|^{4m-2}\right.\nonumber\\
    &+\dfrac{m-1}{2m-1}\sum_{i=0}^{2m-3}|x|^{(2m-3-i)(2m-1/m-1)}|x'|^{(i+1)(2m-1/m-1)}\nonumber\\
    &\left.+\dfrac{2m-1}{2}|\theta'|^{4m-2}+\dfrac{1}{2}\sum_{i=0}^{4m-2}|x|^{4m-2-i}|x'|^{i}\right)|x-x'|\nonumber\\
    &\leq2m\left(\dfrac{3}{2}\sum_{i=0}^{4m-2}|x|^{4m-2-i}|x'|^{i}+|v|^{4m-2}+(2m-3)|\theta|^{4m-2}+\dfrac{1}{2}|\theta'|^{4m-2}\right.\nonumber\\
    &\left.+\dfrac{1}{2}\left(|x|^{2m-1/m-1}+|x'|^{2m-1/m-1}\right)^{2m-2}\right)|x-x'|\nonumber\\
    &\leq2m\left(\left(3\cdot2^{4m-4}+2^{2m-4}\right)\left(|x|^{4m-2}+|x'|^{4m-2}\right)\right.\nonumber\\
    &\left.+|v|^{4m-2}+(2m-3)|\theta|^{4m-2}+\dfrac{1}{2}|\theta'|^{4m-2}\right)|x-x'|\nonumber\\
    &\leq 2m\left(3\cdot 2^{4m-4}+2^{2m-4}+1\right)\left(|v|^{4m-2}+|v'|^{4m-2}\right)|v-v'|.\label{B3}
\end{align}
The same bound as in \eqref{B3} can be derived for $J_{\theta}$, by following the above calculations. Thus, we obtain the following bound for \eqref{B1}
\begin{align}
    &|\nabla U(v)-\nabla U(v')|\leq J_x+J_{\theta}\nonumber\\&\leq \dfrac{m}{4}\left(3\cdot2^{4m}+2^{2m}+16\right)\left(1+|v|^{4m-2}+|v'|^{4m-2}\right)|v-v'|\nonumber\\
    &\leq 2m2^{4m}\left(1+|v|^{4m-2}+|v'|^{4m-2}\right)|v-v'|.\label{B4}
\end{align}
\end{proof}
\begin{remark}\label{remark6}
    Consider the potential $U(\theta,x)$ as given in \eqref{toyU}, it satisfies \hyperlink{Assum2}{A2}, i.e. it is strongly convex with $\mu=2$.
\end{remark}
\begin{proof}
    First, we compute the second-order partial derivatives as follows
\begin{align*}
     \dfrac{\partial^2 U}{\partial x_i\partial x_j}&=\left(4m|x|^{4m-2}+2m(|\theta|^{2m}+1)|x|^{2m-2}+4|x|^2+2(|\theta|^2+1)\right)\delta_{ij}\\&+\left(8m(2m-1)|x|^{4m-4}+4m(m-1)(|\theta|^{2m}+1)|x|^{2m-4}+8\right)x_ix_j\\&:=A(x,\theta)\delta_{ij}+B(x,\theta)x_ix_j+8x_i\theta_j\\
    \dfrac{\partial^2 U}{\partial x_i\partial \theta_j}&=(4m^2|x|^{2m-2}|\theta|^{2m-2}+4)x_i\theta_j:=C(x,\theta)x_i\theta_j+4x_i\theta_j\\
     \dfrac{\partial^2 U}{\partial \theta_i\partial \theta_j}&=\left(4m|\theta|^{4m-2}+2m(|x|^{2m}+1)|\theta|^{2m-2}+4|\theta|^2+2(|x|^2+1)\right)\delta_{ij}\\&+\left(8m(2m-1)|\theta|^{4m-4}+4m(m-1)(|x|^{2m}+1)|\theta|^{2m-4}+8\right)\theta_i\theta_j\\&=A(\theta,x)\delta_{ij}+B(\theta,x)\theta_i\theta_j+8x_i\theta_j
\end{align*}
Next, we decompose the Hessian of $U(\theta,x)$ in the following way
\begin{align*}
    H(U(\theta,x)))&=\begin{bmatrix}
A(x,\theta) I_{d^x}& 0\\
0 & A(\theta,x) I_{d^{\theta}}
\end{bmatrix}+\begin{bmatrix}
B(x,\theta)J_{d^x\times d^x}& C(x,\theta)J_{d^x\times d^{\theta}}\\
C(x,\theta)J_{d^{\theta}\times d^x} & B(\theta,x)J_{d^{\theta}\times d^{\theta}}
\end{bmatrix}\odot\begin{bmatrix}
x\\
\theta 
\end{bmatrix} \begin{bmatrix}
x&\theta
\end{bmatrix}\\
&+\begin{bmatrix}
8J_{d^x\times d^x}& 4J_{d^x\times d^{\theta}}\\
4J_{d^{\theta}\times d^x} & 8J_{d^{\theta}\times d^{\theta}}
\end{bmatrix}\odot\begin{bmatrix}
x\\
\theta 
\end{bmatrix} \begin{bmatrix}
x&\theta
\end{bmatrix},
\end{align*}
where $\odot$ denotes the Hadamard product and $J_{n,m}$ the $n\times m$ ones-matrix. One observes that the leftmost matrix is positive definite with constant $\mu=2$ due to its positive diagonal structure, while the second term is semi-positive definite as the Hadamard product of two semi-positive definite matrices. To see that the first factor is indeed semi-positive definite one calculates the Schur complement;
\begin{align*}
    &B-C{\hat{B}}^{-1}C \propto B\hat{B}-C^2\\
    &=64m^2(2m-1)^2|x|^{4m-4}|\theta|^{4m-4}+32m^2(m-1)(2m-1)|x|^{2m-4}|\theta|^{6m-4}\\&+32m^2(m-1)(2m-1)|x|^{2m-4}|\theta|^{4m-4}+32m^2(m-1)(2m-1)|x|^{6m-4}|\theta|^{2m-4}\\&+16m^2(m-1)^2|x|^{4m-4}|\theta|^{4m-4}+16m^2(m-1)^2|x|^{4m-4}|\theta|^{2m-4}\\
    &+32m^2(m-1)(2m-1)|x|^{4m-4}|\theta|^{2m-4}+16m^2(m-1)^2|x|^{2m-4}|\theta|^{4m-4}\\
    &+16m^2(m-1)^2|x|^{2m-4}|\theta|^{2m-4}-16m^4|x|^{4m-4}|\theta|^{4m-4}\geq 0.
\end{align*}
\noindent Likewise one argues that the rightmost matrix is also semi-positive definite, thus the Hessian $H(U(\theta,x))$ is positive definite and $U$ strongly convex.
\end{proof}
\subsection{Mixed term - Toy Example}
\begin{remark}\label{remark7}
    Consider the potential $U(\theta,x)=x\theta+|x|^4+(|x|^2+1)(|\theta|^2+1)+|\theta|^4$, it satisfies \hyperlink{Assum2}{A2}, i.e. it is strongly convex with $\mu=1$.
\end{remark}
\begin{proof}
We decompose the Hessian of $U(\theta,x)$ as follows
\begin{align*}
&H\left(U(\theta,x)\right)\\&=\begin{bmatrix}
\left(4|x|^2+2|\theta|^2\right) I_{d}& 0\\
0 & \left(4|\theta|^2+2|x|^2\right) I_{d}
\end{bmatrix}+\begin{bmatrix}
2 I_{d}& I_d\\
I_d & 2 I_{d}
\end{bmatrix}+\begin{bmatrix}
8 J_{d}&4 J_d\\
4J_d & 8 J_{d}
\end{bmatrix}\odot \begin{bmatrix}
x\\
\theta 
\end{bmatrix} \begin{bmatrix}
x&\theta
\end{bmatrix},
\end{align*}
One observes that the leftmost matrix is semi-positive definite due to its non-negative diagonal structure, while the rightmost matrix is also semi-positive definite as the product of a positive and semi-positive definite matrices. The intermediate term is positive definite and its smallest eigenvalue grants us $\mu=1$ for the convexity parameter of $U(\theta,x)$.
\end{proof}
\end{document}